\documentclass[12pt,a4paper, oneside, bold,secthm,seceqn,amsthm,ussrhead,reqno]{article}
\usepackage[utf8]{inputenc}
\usepackage[english]{babel}
\usepackage[symbol]{footmisc}
\usepackage{amssymb,amsmath,amsthm,amsfonts,xcolor,enumerate,hyperref, comment,longtable, cleveref}
\usepackage{verbatim}
\usepackage{times}
\usepackage{cite}
\usepackage{pdflscape}
\usepackage{ulem}
\usepackage[mathcal]{euscript}
\usepackage{tikz}
\usepackage{hyperref}
 
\usepackage{cancel}
\usepackage{stmaryrd}
 
\usepackage{amsfonts}
\usepackage{amssymb}
\usepackage{times}
\usepackage{xcolor}
\usepackage{tkz-graph}
\usepackage{url}
\usepackage{float}
\usepackage{tasks}
\usepackage{array}

\usepackage{cite}
\usepackage{hyperref}

 \usepackage{fancyhdr} 
\usepackage{amsfonts}
\usepackage{amsmath}
\usepackage{eurosym}
\usepackage{geometry}

\newcommand{\add}[1]{{\color{red}#1}}

\usepackage{caption,booktabs}

\theoremstyle{plain}
\newtheorem{theorem}{Theorem}
\newtheorem{lemma}[theorem]{Lemma}
\newtheorem{proposition}[theorem]{Proposition}
\newtheorem{remark}[theorem]{Remark}

\newtheorem{definition}[theorem]{Definition}

\usetikzlibrary{arrows}

\newtheorem*{theoremA}{Theorem A}
\newtheorem*{theoremB}{Theorem B}
\usepackage{fouriernc}

\begin{document}
 
 \bigskip

\noindent{\Large
The geometric classification of \\
Novikov--Poisson   and 
Gelfand--Dorfman algebras}\footnote{
The work is supported by
FCT 2023.08031.CEECIND and UID/00212/2025.}\footnote{The original concept, theoretical framework, and primary results were developed by Z.H., I.K., A.S. and Z.Z.; 
while Aristotle (https://aristotle.harmonic.fun/)   acted as a non-human research assistant, assisting in error verification and proof simplification.}

 \bigskip

\begin{center}

 {\bf
   Zeyu Hao\footnote{School of Mathematics and Statistics, Northeast Normal University, Changchun 130024, Jilin, China; \  haozeyu@nenu.edu.cn}, \ 
   Ivan Kaygorodov\footnote{CMA-UBI, University of  Beira Interior, Covilh\~{a}, Portugal; \    kaygorodov.ivan@gmail.com}, 
  Azamat Saydaliyev\footnote{Institute of Mathematics, Academy of
Sciences of Uzbekistan, Tashkent, Uzbekistan;  \ azamatsaydaliyev6@gmail.com}, \&
   Ziyi Zhang\footnote{School of Mathematics and Statistics, Northeast Normal University, Changchun 130024, Jilin, China; \ zhangziyi@nenu.edu.cn}
}

\end{center}

\ 

\noindent {\bf Abstract:}
{\it  
We give the geometric classification of complex $3$-dimensional 
Novikov--Poisson and Gelfand--Dorfman algebras. 
In particular, we prove that 
the variety of complex $3$-dimensional Novikov--Poisson algebras is $12$-dimensional, it has 4 irreducible components and has no rigid algebras; the variety of complex $3$-dimensional Gelfand--Dorfman algebras is $11$-dimensional,  it has $10$ irreducible components and has $3$ rigid algebras. 
As an additional result, we have the geometric classification of complex $3$-dimensional Novikov algebras: 
it is $9$-dimensional, and it has $4$ irreducible components with one rigid algebra.}

 \bigskip 

\noindent {\bf Keywords}:
{\it 
Novikov--Poisson algebra,
Gelfand--Dorfman algebra, 
algebraic classification,
geometric classification, 
degeneration.}

\bigskip 

 \
 
\noindent {\bf MSC2020}:  
17A30 (primary);
17B63,
14L30 (secondary).

	 \bigskip

\ 

\


\tableofcontents 
\newpage
\section*{\large Introduction}
In recent years, Novikov--Poisson and Gelfand--Dorfman algebras have been the subject of active research 
$\big($see, for example, \cite{BBGW23, BCZ18,GS24 , KN23,KP20, KSO19, LW25,PZZ25, X96, X97, Z13, Z15,Z19,Z23,ZCM04, ZHSC-GD, ZHSC,S24 } and references therein$\big)$. 
In the present paper, we provide the geometric classification of complex $3$-dimensional Novikov--Poisson and Gelfand--Dorfman algebras. 
The algebraic classification (up to isomorphism) of algebras of dimension $n$ from a given variety defined by a specific family of polynomial identities is a classic problem in the theory of non-associative algebras \cite{FFKMN,k23,Z15,ZHSC,ZHSC-GD}. There are many results related to the algebraic classification of low-dimensional algebras in the varieties of Jordan, Lie, Leibniz, Zinbiel, and many other algebras \cite{k23}. Deformations and geometric properties of a variety of algebras defined by a family of polynomial identities have been a subject of study since the 1970s; see \cite{ADS26, AKT25, ben, BB14,FKS, FKS25, GRH} and references in \cite{k23,ls,MS}.\medskip 

The varieties of Novikov--Poisson algebras (NP-algebras) and Gelfand--Dorfman algebras (GD-algebras) are among the most well-known varieties of Poisson-type algebras. Gelfand--Dorfman algebras are closely related to differential Poisson algebras. 
In fact, if $(V,\cdot,\{-,-\})$ is a Poisson algebra with a Poisson derivation $d$, then $(V,\circ,\{-,-\})$ is a GD-algebra, where $x\circ y=xd(y)$. A GD-algebra is called special if it can be embedded into an appropriate differential Poisson algebra. The problem of describing special GD-algebras is one of the most important questions in the modern theory of non-associative algebras. For instance, Kolesnikov and Panasenko proved that for any Novikov algebra $(V,\circ)$, the algebra $(V,\circ,[x,y])$, where $[x,y]= x \circ y - y \circ x$, is a special GD-algebra \cite{KP20}. Kolesnikov and Nesterenko proved that every commutator GD-algebra obtained from a Novikov--Poisson algebra is special, i.e., embeddable into a differential Poisson algebra \cite{KN23}. Kolesnikov, Kozlov, and Panasenko proved that the commutator GD-superalgebra of a Novikov superalgebra is special, and subsequently, they proved that every quadratic Lie conformal superalgebra constructed on a finite-dimensional special GD-superalgebra has a finite faithful conformal representation \cite{KKP21}. Gubarev and Sartayev constructed the monomial basis of the free special Gelfand--Dorfman algebra \cite{GS24}. Gelfand--Dorfman algebras appeared in a classical paper by Gelfand and Dorfman in 1979 \cite{GD79}, but significant interest from a purely algebraic point of view was inspired by a paper written by Kolesnikov, Sartayev, and Orazgaliev in 2019 \cite{KSO19}. Since then, they have been under active investigation. For example, Sartayev proved that the variety of commutative GD-algebras coincides with the variety of transposed Poisson algebras \cite{S24}. Kolesnikov and Sartayev proved that the operad of GD-algebras does not satisfy the Dong property for binary quadratic operads \cite{KS26}. Xu established that a quadratic conformal superalgebra is equivalent to a GD-superalgebra \cite{X00}. Yuan proved that a Hom-GD-algebra is equivalent to a Hom-Lie conformal algebra of degree 2 \cite{Y14}. Later, the cited result was generalized to the super case by Yuan, Chen, and He \cite{YCH17}. 
Extending structures for GD-algebras were studied by Wen and Hong \cite{WH24}. It seems that one of the first attempts at obtaining the algebraic classification of $3$-dimensional GD-algebras was made in 2013 \cite{QZZ13}.\medskip 

On the other hand, Novikov--Poisson algebras are closely related to transposed Poisson algebras. Namely, if $(V,\cdot,\circ )$ is a Novikov--Poisson algebra, then $(V,\cdot,[-,-])$ is a transposed Poisson algebra, where $[x, y]=x\circ y-y\circ x$ \cite{BBGW23}. The notion of Novikov--Poisson algebras was introduced by Xu in 1996 \cite{X96}, and since then, they have been under active investigation. Xu used these algebras to construct new examples of simple Novikov algebras \cite{X97}. Zhelyabin and Tikhov established (via the Kantor double process) a connection between NP-algebras and Jordan superalgebras \cite{ZT08}. 
These ideas have been extended in some works by Zhelyabin and Zakharov \cite{Z13,Z19}. Free NP-algebras were studied by Bokut, Chen, and Zhang \cite{BCZ18}. In particular, they constructed a basis of free NP-algebras and proved a Poincaré--Birkhoff--Witt theorem: any NP-algebra is embeddable into its universal enveloping special Novikov--Poisson admissible algebra. 
Kolesnikov and Nesterenko proved that every Novikov--Poisson algebra over a field of characteristic zero can be embedded into a commutative conformal algebra with a derivation \cite{KN23}. Li and Wang established the bialgebra theory for NP-algebras \cite{LW25}. 
Zakharov obtained a classification of all finite-dimensional simple NP-algebras over a field of characteristic other than 2 \cite{Z23}. 
Pozhidaev, Zakharov, and Zhelyabin proved that every non-associative Novikov algebra can be equipped with a nontrivial structure of a Novikov--Poisson algebra \cite{PZZ25}. An algebraic classification of $2$-dimensional NP-algebras was given by Zhao, Bai, and Meng in \cite{ZCM04}; the full graph of degenerations of $2$-dimensional NP-algebras was given by Abdelwahab, Fernández Ouaridi, and Kaygorodov in \cite{AFK25}; and an early attempt at the classification of $3$-dimensional NP-algebras was made in \cite{Z15}. Recently, Zhang, Hao, Sun, and Chen obtained the complete algebraic classification of complex $3$-dimensional Novikov--Poisson and Gelfand--Dorfman algebras \cite{ZHSC,ZHSC-GD}.

 \section{\large Preliminaries: degenerations and    geometric classification}
 Given a complex vector space ${\mathbb V}$ of dimension $n$, the set of bilinear maps \begin{center}$\textrm{Bil}\big({\mathbb V} \times {\mathbb V}, {\mathbb V}\big) \cong \textrm{Hom}\big({\mathbb V} ^{\otimes2}, {\mathbb V}\big)\cong\big ({\mathbb V}^*\big)^{\otimes2} \otimes {\mathbb V}$
\end{center} is a vector space of dimension $n^3$. The set of pairs of bilinear maps 
\begin{center}$\textrm{Bil}\big({\mathbb V} \times {\mathbb V}, {\mathbb V}\big) \oplus \textrm{Bil}\big({\mathbb V}\times {\mathbb V}, {\mathbb V}\big) \cong \big(({\mathbb V}^*\big)^{\otimes2} \otimes {\mathbb V}) \oplus \big(({\mathbb V}^*)^{\otimes2} \otimes {\mathbb V}\big)$ \end{center} is a vector space of dimension $2n^3$. This vector space has the structure of an affine space $\mathbb{C}^{2n^3}$ in the following sense:
once a basis $e_1, \ldots, e_n$ of ${\mathbb V}$ is fixed, any pair $(\mu, \mu') \in \textrm{Bil}\big({\mathbb V} \times {\mathbb V}, {\mathbb V}\big)^2$ is determined by some parameters $c_{ij}^k, c_{ij}'^k \in \mathbb{C}$,  called {structural constants},  such that
$$\mu(e_i, e_j) = \sum_{k=1}^n c_{ij}^k e_k \textrm{ and } \mu'(e_i, e_j) = \sum_{k=1}^n c_{ij}'^k e_k$$
which corresponds to a point in the affine space $\mathbb{C}^{2n^3}$. Then a subset $\mathcal S$ of $\textrm{Bil}\big({\mathbb V} \times {\mathbb V}, {\mathbb V}\big)^2$ corresponds to an algebraic variety, i.e., a Zariski closed set, if there are some polynomial equations in variables $c_{ij}^k, c_{ij}'^k$ with zero locus equal to the set of structural constants of the bilinear pairs in $\mathcal S$. 

Given the identities defining Poisson-type algebras (in the present paper, Novikov--Poisson or Gelfand--Dorfman algebras), we can obtain a set of polynomial equations in the variables $c_{ij}^k, c_{ij}'^k$. This class of $n$-dimensional Poisson-type algebras is a variety. Denote it by $\mathcal{T}_{n}$.
Now, consider the following action of $\rm{GL}({\mathbb V})$ on ${\mathcal T}_{n}$:
$$\big(g*(\mu, \mu')\big)(x,y) \ := \ \big(g \mu (g^{-1} x, g^{-1} y),\  g \mu' (g^{-1} x, g^{-1} y)\big)$$
for $g\in\rm{GL}({\mathbb V})$, $(\mu, \mu')\in \mathcal{T}_{n}$ and for any $x, y \in {\mathbb V}$. Observe that the $\textrm{GL}({\mathbb V})$-orbit of $(\mu, \mu')$, denoted ${\mathcal O}\big((\mu, \mu')\big)$, contains all the structural constants of the bilinear pairs isomorphic to the Poisson-type algebra with structural constants $(\mu, \mu')$.

A geometric classification of a variety of algebras consists of describing the irreducible components of the variety. Recall that any affine variety can be represented uniquely as an irredundant finite union of its irreducible components.
Note that describing the irreducible components of  ${\mathcal{T}_{n}}$ gives us the rigid algebras of the variety, which are those bilinear pairs with an open $\textrm{GL}(\mathbb V)$-orbit. This is because a bilinear pair is rigid in a variety if and only if the closure of its orbit is an irreducible component of the variety. 
For this reason, the following notion is convenient. Denote by $\overline{{\mathcal O}\big((\mu, \mu')\big)}$ the closure of the orbit of $(\mu, \mu')\in{\mathcal{T}_{n}}$. For a parametric family of algebras ${\rm T}(*)=\big\{{\rm T}(\alpha):\alpha\in I\big\}$, we denote by $\overline{{\mathcal O}\big({\rm T}(*)\big)}=\overline{\bigcup_{\alpha\in I}{\mathcal O}\big({\rm T}(\alpha)\big)}$ the closure of the union of the orbits of all algebras of the family. When a family is denoted by its name with parameters 
$\big($for example, ${\rm NP}_{11}^{\alpha,\beta,\gamma}\big),$ 
we write $\overline{{\mathcal O}({\rm NP}_{11}^{\alpha,\beta,\gamma})}$ for the closure of the union of the orbits of all algebras of this family.

\begin{definition}
\rm Let ${\rm T} $ and ${\rm T}'$ be two $n$-dimensional Poisson-type algebras  corresponding to the variety $\mathcal{T}_{n}$ and $(\mu, \mu'),\  (\lambda,\lambda') \in \mathcal{T}_{n}$ be their representatives in the affine space, respectively. The algebra ${\rm T}$ is said to {degenerate}  to ${\rm T}'$, and we write ${\rm T} \to {\rm T} '$, if $(\lambda,\lambda')\in\overline{{\mathcal O}\big((\mu, \mu')\big)}$. If ${\rm T}  \not\cong {\rm T}'$, then we call it a  {proper degeneration}. Conversely, if $(\lambda,\lambda')\not\in\overline{{\mathcal O}\big((\mu, \mu')\big)}$ then  we say that ${{\rm T} }$ does not degenerate to ${{\rm T} }'$ and we write ${{\rm T} }\not\to {{\rm T} }'$.
\end{definition}
 \begin{remark}\label{rem-countable-orbit-closures}
Let ${\rm T}$ and ${\rm T}'$ be two nonisomorphic complex Poisson-type algebras represented by structures in ${\mathcal T}_n$. If all but at most countably many points of $\overline{\mathcal O({\rm T}')}$ belong to $\overline{\mathcal O({\rm T})}$, then
$
\overline{\mathcal O({\rm T}')}
\subsetneq
\overline{\mathcal O({\rm T})}.
$
Consequently, $\overline{\mathcal O({\rm T}')}$ is not an irreducible component of ${\mathcal T}_n$.
\end{remark} 

Furthermore, we have the following notion for a parametric family of algebras.

\begin{definition}
\rm
Let ${{\rm T} }(*)=\big\{{{\rm T} }(\alpha): {\alpha\in I}\big\}$ be a family of $n$-dimensional Poisson-type algebras  corresponding to ${\mathcal{T} }_n$ and let ${{\rm T} }'$ be another $n$-dimensional Poisson-type algebra. Suppose that ${{\rm T} }(\alpha)$ is represented by the structure $\big(\mu(\alpha),\mu'(\alpha)\big)\in{\mathcal{T} }_n$ for $\alpha\in I$ and ${{\rm T} }'$ is represented by the structure $(\lambda, \lambda')\in{\mathcal{T} }_n$. We say that the family ${{\rm T} }(*)$ {degenerates}   to ${{\rm T} }'$, and write ${{\rm T} }(*)\to {{\rm T} }'$, if $(\lambda,\lambda')\in\overline{ \big\{{\mathcal O}\big((\mu(\alpha),\mu'(\alpha))\big)\big\}_{\alpha\in I}}$.
Conversely, if $(\lambda,\lambda')\not\in\overline{\big\{{\mathcal O}\big((\mu(\alpha),\mu'(\alpha))\big)\big\}_{\alpha\in I}}$ then we call it a  {non-degeneration}, and we write ${{\rm T} }(*)\not\to {{\rm T} }'$.

\end{definition}

Observe that ${\rm T}'$ corresponds to an irreducible component of $\mathcal{T}_n$ (more precisely, $\overline{{\mathcal O}({\rm T}')}$ is an irreducible component) if and only if ${{\rm T} }\not\to {{\rm T} }'$ for any $n$-dimensional Poisson-type algebra ${\rm T}\not\cong{\rm T}'$ and ${{\rm T}(*) }\not\to {{\rm T} }'$ for any parametric family ${\rm T}(*)$ of $n$-dimensional  Poisson-type algebras which does not contain an algebra isomorphic to ${\rm T}'$. Similarly, the closure $\overline{{\mathcal O}({\rm T}(*))}$ of an irreducible parametric family ${\rm T}(*)$ is an irreducible component of $\mathcal{T}_n$ if and only if it is not properly contained in the orbit closure of an algebra or in the closure $\overline{{\mathcal O}({\rm T}'(*))}$ of another irreducible parametric family ${\rm T}'(*)$. In this case, we will use the following ideas to prove that a particular algebra (or family) corresponds to an irreducible component.

\noindent 
Firstly, since $\mathrm{dim}\,{\mathcal O}\big((\mu, \mu')\big) = n^2 - \mathrm{dim}\,\mathfrak{Der}({\rm T})$, if $ {\rm T} \to  {\rm T} '$ and  ${\rm T} \not\cong  {\rm T} '$, we have that $\dim\mathcal O(T)>\dim\mathcal O(T')$, where $\mathfrak{Der}( {\rm T} )$ denotes the Lie algebra of derivations of  ${\rm T} $. 

\noindent  Secondly, to prove degenerations, let ${{\rm T} }$ and ${{\rm T} }'$ be two  Poisson-type algebras represented by the structures $(\mu, \mu')$ and $(\lambda, \lambda')$ from ${{\mathcal T} }_n$, respectively. Let $c_{ij}^k, c_{ij}'^k$ be the structure constants of $(\lambda, \lambda')$ in a basis $e_1,\dots, e_n$ of ${\mathbb V}$. If there exist $n^2$ maps $a_i^j: \mathbb{C}^*\to \mathbb{C}$ such that $E_i(t)=\sum_{j=1}^na_i^j(t)e_j$ ($1\leq i \leq n$) form a basis of ${\mathbb V}$ for any $t\in\mathbb{C}^*$ and the structure constants $c_{ij}^k(t), c_{ij}'^k(t)$ of $(\mu, \mu')$ in the basis $E_1(t),\dots, E_n(t)$ satisfy $\lim\limits_{t\to 0}c_{ij}^k(t)=c_{ij}^k$ and $\lim\limits_{t\to 0}c_{ij}'^k(t)=c_{ij}'^k$, then ${{\rm T} }\to {{\rm T} }'$. In this case,  $E_1(t),\dots, E_n(t)$ is called a parametrized basis for ${{\rm T} }\to {{\rm T} }'$.

\noindent Thirdly, to prove non-degenerations, we may use the following lemma, see \cite{AFK25}  and the references therein. 

\begin{lemma}\label{main1}
Consider two Poisson-type algebras ${\rm T}$ and ${\rm T}'$. Suppose ${\rm T} \to {\rm T}'$. Let ${\rm C}$   be a Zariski closed set in ${\mathcal T}_n$ that is stable under the action of the invertible upper (lower) triangular matrices (with respect to the fixed basis $e_1,\dots,e_n$). Then if there is a representation $(\mu, \mu')$ of ${\rm T}$ in ${\rm C}$, then there is a representation $(\lambda, \lambda')$ of ${\rm T}'$ in ${\rm C}$.
\end{lemma}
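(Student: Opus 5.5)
The plan is to use that a Borel subgroup acting on a Zariski closed stable set behaves like a "closed under limits" condition, combined with the fact that orbit closures under $\mathrm{GL}(\mathbb V)$ can be computed through the Borel subgroup. Let $B$ denote the group of invertible upper triangular matrices; the lower triangular case is symmetric. First observe that $\mathrm{GL}(\mathbb V)/B$ is the complete flag variety, which is projective. Hence the map $\mathrm{GL}(\mathbb V)\times^{B}\overline{B*(\mu,\mu')}\to \mathcal T_n$, $[g,x]\mapsto g*x$, is proper, and in particular has closed image. Its image contains the orbit $\mathcal O\big((\mu,\mu')\big)$ and is contained in $\overline{\mathcal O\big((\mu,\mu')\big)}$. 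Therefore
$$\overline{\mathcal O\big((\mu,\mu')\big)}=\mathrm{GL}(\mathbb V)*\overline{B*(\mu,\mu')}.$$
This is the standard argument via the Borel fixed point theorem and completeness of $G/B$, and it is the step I expect to carry the real content.

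Next, since ${\rm T}\to{\rm T}'$, the representative $(\lambda,\lambda')$ lies in $\overline{\mathcal O\big((\mu,\mu')\big)}$. By the equality above, $(\lambda,\lambda')=g*y$ for some $g\in \mathrm{GL}(\mathbb V)$ and some $y\in\overline{B*(\mu,\mu')}$. Now $y$ is isomorphic to ${\rm T}'$, since it lies in the same $\mathrm{GL}$-orbit as $(\lambda,\lambda')$. Moreover $(\mu,\mu')\in{\rm C}$ and ${\rm C}$ is $B$-stable, so $B*(\mu,\mu')\subseteq{\rm C}$. Because ${\rm C}$ is Zariski closed, this gives $\overline{B*(\mu,\mu')}\subseteq{\rm C}$. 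Thus $y$ is a representation of ${\rm T}'$ lying in ${\rm C}$, which is what the lemma asserts.

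The only nontrivial point is the properness statement. To justify it, I will factor the map as the closed embedding
$$\mathrm{GL}(\mathbb V)\times^{B} Z\hookrightarrow \mathrm{GL}(\mathbb V)/B\times\mathcal T_n,\qquad Z=\overline{B*(\mu,\mu')},$$
given by $[g,x]\mapsto(gB,g*x)$, followed by projection onto $\mathcal T_n$. This projection is closed because $\mathrm{GL}(\mathbb V)/B$ is complete. Closedness of the embedding's image can be checked locally on the Bruhat cells of $\mathrm{GL}(\mathbb V)/B$, using that $Z$ is $B$-stable.

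Alternatively, one can argue more elementarily through the one-parameter description of degenerations. Write $g(t)*(\mu,\mu')\to(\lambda,\lambda')$ and use the Iwasawa/Bruhat decomposition $g(t)=k(t)b(t)$ with $k(t)$ in a compact group, for instance $U(n)$. Passing to a subsequence, $k(t)\to k_0$. Then $b(t)*(\mu,\mu')\to k_0^{-1}*(\lambda,\lambda')$, which lies in ${\rm C}$ by closedness. Since compactness arguments live in the Euclidean topology, we also need that Zariski closed sets are Euclidean closed and that over $\mathbb C$ the Euclidean and Zariski orbit closures agree. I would present this second version as the proof, since it avoids the associated-bundle formalism.
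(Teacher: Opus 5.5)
Your proof is correct. The paper does not prove Lemma~\ref{main1} itself; it quotes it from \cite{AFK25}. The standard argument for it in the degeneration literature is exactly your second version. The factorization $\mathrm{GL}(\mathbb V)=U(n)\,B$ with $U(n)$ compact gives $\overline{\mathcal O((\mu,\mu'))}=U(n)*\overline{B*(\mu,\mu')}$. Hence some unitary translate $k_0^{-1}*(\lambda,\lambda')$ lies in $\overline{B*(\mu,\mu')}\subseteq{\rm C}$.

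Your first version is the algebraic counterpart, with completeness of $\mathrm{GL}(\mathbb V)/B$ playing the role of compactness of $U(n)$. It avoids comparing Euclidean and Zariski closures, at the cost of the incidence-variety argument.

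Three points of precision:
\begin{itemize}
\item The factorization needed in the analytic version is the Gram--Schmidt (QR) one: $g=kb$ with $k$ unitary and $b$ upper triangular, or $g=k\ell$ with $\ell$ lower triangular in the other case. It is not a Bruhat decomposition.
\item In the algebraic version, the closedness of $\{(gB,y): g^{-1}*y\in Z\}\subseteq \mathrm{GL}(\mathbb V)/B\times\mathcal T_n$, with $Z=\overline{B*(\mu,\mu')}$, cannot be tested cell by cell on the Bruhat cells $BwB/B$, since these are only locally closed. Check it either on an open cover by translates of the big cell, or by noting that its preimage $\{(g,y): g^{-1}*y\in Z\}$ in $\mathrm{GL}(\mathbb V)\times\mathcal T_n$ is closed and saturated for the open quotient map onto $\mathrm{GL}(\mathbb V)/B\times\mathcal T_n$.
\item In the analytic version, a sequence $g_m$ with $g_m*(\mu,\mu')\to(\lambda,\lambda')$ is all you need. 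It exists because the Zariski and Euclidean closures of the orbit coincide, so no one-parameter family $g(t)$ is required.
\end{itemize}
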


To apply Lemma \ref{main1}, we will give the explicit definition of the appropriate stable Zariski closed set  ${\rm C}$ in terms of the variables $c_{ij}^k, c_{ij}'^k$ in each case: the set ${\rm C}$ consists of all structures whose structure constants satisfy the listed relations.

\begin{remark}
\label{redbil}

Moreover, let ${{\rm T} }$ and ${{\rm T} }'$ be two  Poisson-type algebras represented by the structures $(\mu, \mu')$ and $(\lambda, \lambda')$ from ${\mathcal T}_n$. Suppose ${\rm T}\to{\rm T}'$. Then if $\mu, \mu', \lambda, \lambda'$ represent algebras ${\rm T}_{0}, {\rm T}_{1}, {\rm T}'_{0}, {\rm T}'_{1}$ in the affine space $\mathbb{C}^{n^3}$ of algebras with a single multiplication, respectively, we have ${\rm T}_{0}\to {\rm T}'_{0}$ and $ {\rm T}_{1}\to {\rm T}'_{1}$.
So, for example, $(0, \mu)$ cannot degenerate to $(\lambda, 0)$ unless $\lambda=0$. 

\end{remark}

\noindent Fourthly, to prove ${{\rm T} }(*)\to {{\rm T} }'$, suppose that ${{\rm T} }(\alpha)$ is represented by the structure $\big(\mu(\alpha),\mu'(\alpha)\big)\in{\mathcal{T} }_n$ for $\alpha\in I$ and ${{\rm T} }'$ is represented by the structure $(\lambda, \lambda')\in{\mathcal{T} }_n$. Let $c_{ij}^k, c_{ij}'^k$ be the structure constants of $(\lambda, \lambda')$ in a basis  $e_1,\dots, e_n$ of ${\mathbb V}$. If there is a pair of maps $(f, (a_i^j))$, where $f:\mathbb{C}^*\to I$ and $a_i^j:\mathbb{C}^*\to \mathbb{C}$ are such that $E_i(t)=\sum_{j=1}^na_i^j(t)e_j$ ($1\le i\le n$) form a basis of ${\mathbb V}$ for any  $t\in\mathbb{C}^*$ and the structure constants $c_{ij}^k(t), c_{ij}'^k(t)$ of $\big(\mu\big(f(t)\big),\mu'\big(f(t)\big)\big)$ in the basis $E_1(t),\dots, E_n(t)$ satisfy $\lim\limits_{t\to 0}c_{ij}^k(t)=c_{ij}^k$ and $\lim\limits_{t\to 0}c_{ij}'^k(t)=c_{ij}'^k$, then ${{\rm T} }(*)\to {{\rm T} }'$. In this case,  $E_1(t),\dots, E_n(t)$ and $f(t)$ are called a parametrized basis and a parametrized index for ${{\rm T} }(*)\to {{\rm T} }'$, respectively.

\noindent Fifthly, to prove ${{\rm T} }(*)\not \to {{\rm T} }'$, we may use the following analog of Lemma \ref{main1} for parametric families, see \cite{afm} and the references therein.

\begin{lemma}\label{main2}
Consider a family of Poisson-type algebras ${\rm T}(*)$ and a  Poisson-type   algebra ${\rm T}'$. Suppose ${\rm T}(*) \to {\rm T}'$. Let ${\rm C}$  be a Zariski closed set in $\mathcal{T}_n$ that is stable under the action of the invertible upper (lower) triangular matrices (with respect to the fixed basis $e_1,\dots,e_n$). Then if there is a representation $\big(\mu(\alpha), \mu'(\alpha)\big)$ of ${\rm T}(\alpha)$ in ${\rm C}$ for every $\alpha\in I$, then there is a representation $(\lambda, \lambda')$ of ${\rm T}'$ in ${\rm C}$.
\end{lemma}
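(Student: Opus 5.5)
The plan is to reduce Lemma \ref{main2} to the single-algebra argument behind Lemma \ref{main1}, using the Borel fixed point theorem. Let ${\rm B}$ be the group of invertible upper (respectively lower) triangular matrices; it is a connected solvable algebraic group acting on $\mathcal{T}_n$. Put $X=\bigcup_{\alpha\in I}{\mathcal O}\big((\mu(\alpha),\mu'(\alpha))\big)$. The hypothesis ${\rm T}(*)\to{\rm T}'$ says $(\lambda,\lambda')\in\overline{X}$. Since ${\rm C}$ is closed and ${\rm B}$-stable, ${\rm GL}(\mathbb V)\cdot{\rm C}$ is the image of the quotient ${\rm GL}(\mathbb V)\times^{\rm B}{\rm C}$. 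Because ${\rm GL}(\mathbb V)/{\rm B}$ is a projective (complete) flag variety, this image is closed. This is the standard fact that, for a closed ${\rm B}$-stable subset of a ${\rm G}$-variety, its ${\rm G}$-saturation is closed.

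\textbf{Step 1.} Verify this closedness concretely. The map ${\rm GL}(\mathbb V)\times {\rm C}\to\mathcal{T}_n$, $(g,x)\mapsto g*x$, factors through ${\rm GL}(\mathbb V)\times^{\rm B}{\rm C}$. This space embeds as a closed subvariety of ${\rm GL}(\mathbb V)/{\rm B}\times\mathcal{T}_n$ via $[g,x]\mapsto(g{\rm B},g*x)$; its image is $\{(g{\rm B},y): g^{-1}*y\in{\rm C}\}$, which is closed because ${\rm C}$ is ${\rm B}$-stable. Projecting along the complete factor ${\rm GL}(\mathbb V)/{\rm B}$ is a closed map, so ${\rm GL}(\mathbb V)*{\rm C}$ is Zariski closed in $\mathcal{T}_n$.

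\textbf{Step 2.} By assumption each ${\rm T}(\alpha)$ has a representative in ${\rm C}$, so every orbit ${\mathcal O}\big((\mu(\alpha),\mu'(\alpha))\big)$ lies in ${\rm GL}(\mathbb V)*{\rm C}$. Hence $X\subseteq{\rm GL}(\mathbb V)*{\rm C}$, and taking closures with Step 1 gives $\overline{X}\subseteq{\rm GL}(\mathbb V)*{\rm C}$. So $(\lambda,\lambda')=g*c$ for some $g\in{\rm GL}(\mathbb V)$ and $c\in{\rm C}$. Then $c=g^{-1}*(\lambda,\lambda')$ is a representation of ${\rm T}'$ lying in ${\rm C}$, as required.

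The main obstacle is Step 1, the closedness of the saturation. This is exactly where stability under a Borel subgroup, rather than an arbitrary subgroup, is essential. I would argue it through completeness of the flag variety, as above, rather than through limits of one-parameter families. The Zariski-closure and Euclidean-closure versions of degeneration agree for orbits, but not obviously for infinite unions of orbits. Working purely with Zariski closures, as the definition of ${\rm T}(*)\to{\rm T}'$ does, avoids that issue entirely. The remaining steps are formal.
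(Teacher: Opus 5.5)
Your proof is correct, and it is essentially the standard argument behind this lemma. The paper does not prove the lemma itself; it only cites \cite{afm}. Everything reduces to the Zariski closedness of ${\rm GL}(\mathbb V)*{\rm C}$, which then must contain the closure of $\bigcup_{\alpha\in I}{\mathcal O}\big((\mu(\alpha),\mu'(\alpha))\big)$. You get that closedness from completeness of ${\rm GL}(\mathbb V)/{\rm B}$; that, not the Borel fixed point theorem you mention at the outset, is what you actually use. The degeneration literature often derives it instead from ${\rm GL}_n(\mathbb C)=U(n)\,{\rm B}$ and compactness of $U(n)$, with a constructibility argument to pass from Euclidean to Zariski closedness, but the underlying idea is the same.
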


\noindent Finally, the following remark simplifies the geometric problem.

\begin{remark}\label{rem20}
 Let $(\mu, \mu')$ and $(\lambda, \lambda')$ represent two Poisson-type algebras. Suppose $(\lambda, 0)\not\in\overline{{\mathcal O}\big((\mu, 0)\big)}$, $($resp., $(0, \lambda')\not\in\overline{{\mathcal O}\big((0, \mu')\big)}$$)$, then $(\lambda, \lambda')\not\in\overline{{\mathcal O}\big((\mu, \mu')\big)}$. As we construct the classification of Poisson-type algebras from a certain class of algebras with a single multiplication which remains unchanged, this remark becomes very useful.  
\end{remark}

\subsection{\normalsize The geometric classification of 
$3$-dimensional Novikov algebras}

Beneš and Burde studied degenerations between complex $3$-dimensional Novikov algebras and the orbit closures in the variety of these algebras \cite{BB14}. 
As it follows from their results,  every $3$-dimensional Novikov algebra lies in the orbit closure of one of the algebras ${\rm N}_{01}-{\rm N}_{11}^{\alpha,\beta},$ where
\begin{longtable}{lclll}
    $\rm{N}_{01}$&$:$&$e_1\circ e_{1}=e_1$&$ e_{2}\circ e_2=e_2$&$ e_3\circ e_{3}=e_3$\\
    $\rm{N}_{02}$&$:$&$e_1\circ e_{1}=e_1$&$e_1\circ e_{2}= e_2+e_3$&$e_1\circ e_3=e_3$\\
                             &&$e_2\circ e_1=e_2$& $e_2\circ e_2=e_3$ & $e_3 \circ e_1=e_3$\\
    $\rm{N}_{03}$&$:$&$e_1\circ e_{1}=-e_1+e_2$&$ e_{2}\circ e_1=-e_2$&$e_3\circ e_{3}=e_3$\\
    $\rm{N}_{04}$&$:$&$e_1\circ e_{1}=-e_1+e_3$&$e_1\circ e_{3}= e_2$&$e_2\circ e_1=-e_2$\\
                             &&$e_3\circ e_1=-e_3$\\
    
    $\rm{N}_{05}$&$:$&$e_1\circ e_{1}=-\frac{1}{2}e_1+e_3$&$e_1\circ e_{2}= \frac{1}{2}e_2$&$e_1\circ e_3=e_2$\\
                             &&$e_2\circ e_1=-\frac{1}{2}e_2$& $e_3\circ e_1=e_2-\frac{1}{2}e_3$\\
    $\rm{N}_{06}$&$:$&$e_1\circ e_{1}=-e_1+e_2$&$e_1\circ e_{3}=- \frac{1}{2}e_3$&$e_2\circ e_1=-e_2$\\
                             &&$e_3\circ e_1=-e_3$& $e_3\circ e_3=e_2$ \\
    $\rm{N}_{07}^\alpha$&$:$&$e_1\circ e_{1}=\alpha e_1$&$e_1\circ e_{2}=(\alpha+1) e_2$&$e_2\circ e_1=\alpha e_2$\\
                             &&$e_3\circ e_3=e_3$\\
    $\rm{N}_{08}^\alpha$&$:$&$e_1\circ e_{1}=\alpha e_1$&$e_1\circ e_{2}=(\alpha+1) e_2$&$e_1\circ e_3=e_2+(\alpha+1)e_3$\\
                             &&$e_2\circ e_1=\alpha e_2$& $e_3\circ e_1=\alpha e_3$\\
    $\rm{N}_{09}^\alpha$&$:$&$e_1\circ e_{1}=-e_1+e_2$&$e_1\circ e_{3}= (\alpha-1)e_3$&$e_2\circ e_1=-e_2$\\
                             &&$e_3\circ e_1=-e_3$\\
    $\rm{N}_{10}^\alpha$&$:$&$e_1\circ e_{1}=\alpha e_1$&$e_1\circ e_{2}=(\alpha+1) e_2$&$e_1\circ e_3=(\alpha+\frac{1}{2})e_3$\\
                             &&$e_2\circ e_1=\alpha e_2$& $e_3\circ e_1=\alpha e_3$ & $e_3 \circ e_3=e_2$\\
    $\rm{N}_{11}^{\alpha, \beta}$&$:$&$e_1\circ e_{1}=\beta e_1$&$e_1\circ e_{2}=(\beta+1) e_2$&$e_1\circ e_3=(\alpha+\beta)e_3$\\
                             &&$e_2\circ e_1=\beta e_2$& $e_3\circ e_1=\beta e_3$\\
\end{longtable}
 
However, for our goal, we will need a stronger result.
Namely, we need the following proposition.

\begin{proposition}\label{propN}
The variety of complex $3$-dimensional Novikov algebras has dimension $9$ and it has $4$ irreducible components defined by
\begin{center}
        $\mathcal{C}_{1}=\overline{{\mathcal O}( {\rm N}_{01})},$ \
        $\mathcal{C}_{2}=\overline{{\mathcal O}( {\rm N}_{07}^{\alpha})},$ \
        $\mathcal{C}_{3}=\overline{{\mathcal O}( {\rm N}_{10}^{\alpha})},$ \ and
        $\mathcal{C}_{4}=\overline{{\mathcal O}( {\rm N}_{11}^{\alpha,\beta})}.$
\end{center}
In particular, there is only one rigid algebra  ${\rm N}_{01}$.
\end{proposition}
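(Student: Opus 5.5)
The starting point is the result of Beneš and Burde quoted above: every complex $3$-dimensional Novikov algebra lies in the orbit closure of one of ${\rm N}_{01},\dots,{\rm N}_{11}^{\alpha,\beta}$. Hence the variety is the union of the eleven closed sets $\overline{{\mathcal O}({\rm N}_{0i})}$ and $\overline{{\mathcal O}({\rm N}_{07}^{\alpha})},\dots,\overline{{\mathcal O}({\rm N}_{11}^{\alpha,\beta})}$. Each is irreducible, being the closure of the image of $\mathrm{GL}_3\times$(an irreducible parameter space). The proof therefore has two parts. First, show that $\overline{{\mathcal O}({\rm N}_{02})},\dots,\overline{{\mathcal O}({\rm N}_{06})},\overline{{\mathcal O}({\rm N}_{08}^{\alpha})},\overline{{\mathcal O}({\rm N}_{09}^{\alpha})}$ are contained in the union of $\mathcal C_1,\dots,\mathcal C_4$. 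Second, show that no one of $\mathcal C_1,\dots,\mathcal C_4$ lies in another.

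\textbf{Dimensions.} Compute $\dim\mathfrak{Der}$ for the four candidates, and for the others as a check.
\begin{itemize}
\item ${\rm N}_{01}=\mathbb C^3$ is semisimple associative commutative, so its only derivation is $0$ and $\dim\mathcal C_1=9$.
\item For a generic member of ${\rm N}_{07}^\alpha$ and of ${\rm N}_{10}^\alpha$, I expect $\dim\mathfrak{Der}=2$. This gives orbit dimension $7$, hence $\dim\mathcal C_2=\dim\mathcal C_3=8$.
\item For generic ${\rm N}_{11}^{\alpha,\beta}$, the derivations should be the diagonal maps $e_1\mapsto 0$, $e_2\mapsto e_2$, $e_3\mapsto e_3$ (both scalings), so $\dim\mathfrak{Der}=2$. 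Orbit dimension is $7$, plus two parameters, giving $\dim\mathcal C_4=9$.
\end{itemize}
This yields $\dim=9$, attained by $\mathcal C_1$ and $\mathcal C_4$.

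\textbf{Absorbing the remaining algebras.} For each non-candidate algebra I will exhibit an explicit parametrized basis $E_i(t)$, together with a parametrized index $f(t)$ where needed, degenerating a candidate onto it. The natural guesses:
\begin{itemize}
\item ${\rm N}_{02}$ and ${\rm N}_{03}$: these should come from ${\rm N}_{01}$. For ${\rm N}_{03}$, take $E_1=-(e_1+e_2)+t e_1$-type bases of $e_1,e_2$ and keep $E_3=e_3$; unital-type contractions should also handle ${\rm N}_{02}$.
\item ${\rm N}_{08}^\alpha$: this is the "Jordan block" version of ${\rm N}_{11}^{\alpha',\beta}$ at $\alpha'=1$ (so that $\alpha+\beta=\beta+1$). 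It should follow from ${\rm N}_{11}^{1+t,\alpha}$ with $E_3=e_2+t e_3$-type bases.
\item ${\rm N}_{09}^\alpha$ and ${\rm N}_{04}$: similarly degenerations from the family ${\rm N}_{11}$, or from ${\rm N}_{07}$, at $\beta=-1$.
\item ${\rm N}_{05}$ and ${\rm N}_{06}$: from ${\rm N}_{10}^{\alpha}$ with $\alpha$ near $-\tfrac12$ or $-1$.
\end{itemize}
Where a single degeneration is awkward, Remark \ref{rem-countable-orbit-closures} lets me instead show that all but countably many members of a family lie in a candidate closure.

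\textbf{Irredundancy.} Then I show no candidate closure lies in another.
\begin{itemize}
\item $\mathcal C_1$ and $\mathcal C_4$ both have dimension $9$, so neither lies properly in the other. Moreover ${\rm N}_{01}$ is not in $\mathcal C_4$, since every ${\rm N}_{11}$ algebra satisfies $(x\circ y - y\circ x)\circ$-type identities, or more simply has a $2$-dimensional ideal ${\rm span}(e_2,e_3)$ with zero square. Thus $\mathcal C_1\neq\mathcal C_4$.
\item $\mathcal C_2$ and $\mathcal C_3$ have dimension $8$, so they could only lie in $\mathcal C_1$ or $\mathcal C_4$. Now $\mathcal C_1$ is the orbit closure of a single algebra, and every algebra in it is commutative. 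Both ${\rm N}_{07}^\alpha$ and ${\rm N}_{10}^\alpha$ are noncommutative ($e_1\circ e_2\ne e_2\circ e_1$), so neither lies in $\mathcal C_1$.
\item To exclude $\mathcal C_2,\mathcal C_3\subset\mathcal C_4$, use Lemma \ref{main2} with a Borel-stable closed set ${\rm C}$ containing a representation of every ${\rm N}_{11}^{\alpha,\beta}$. One candidate is the set where $e_2,e_3$ span an ideal $I$ with $I\circ I=0$, i.e. $c_{ij}^k=0$ for $i,j\ge2$ and $c_{i j}^1=0$ whenever $i$ or $j\ge 2$; this set is stable under lower triangular matrices. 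Then ${\rm N}_{07}^\alpha$ (where $e_3\circ e_3=e_3$ is a nonzero idempotent) and ${\rm N}_{10}^\alpha$ (where $e_3\circ e_3=e_2$ with $e_3$ outside the square-zero part) must be shown to have no such representation. In invariant terms: ${\rm N}_{11}$ has a $2$-dimensional two-sided ideal with zero product, while ${\rm N}_{07}^\alpha$ and ${\rm N}_{10}^\alpha$ do not. Checking the latter for ${\rm N}_{10}^\alpha$ is the main obstacle I expect: its square-zero ideals must be examined for all $\alpha$. If it fails, I would instead use a trace/eigenvalue invariant of left multiplications, such as the relation $\mathrm{tr}L_x$ vs.\ $\mathrm{tr}R_x$, which defines a closed set.
\end{itemize}

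Finally, rigidity follows. The only single-orbit component is $\mathcal C_1$, so ${\rm N}_{01}$ is the unique rigid algebra.
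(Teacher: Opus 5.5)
\textbf{There is a genuine gap: your plan for ${\rm N}_{02}$ and ${\rm N}_{03}$ cannot work, and your dimension count for $\mathcal C_2,\mathcal C_3$ is wrong, which leaves one case of irredundancy open.}

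\textbf{Absorption step.} You propose to obtain ${\rm N}_{02}$ and ${\rm N}_{03}$ from ${\rm N}_{01}$, which is impossible. Commutativity, $c_{ij}^k=c_{ji}^k$, is a closed ${\rm GL}_3$-stable condition. Hence every algebra in $\overline{\mathcal O({\rm N}_{01})}$ is commutative, a fact you use yourself a few lines later. But ${\rm N}_{02}$ has $e_1\circ e_2=e_2+e_3\neq e_2=e_2\circ e_1$, and ${\rm N}_{03}$ has $e_2\circ e_1=-e_2\neq 0=e_1\circ e_2$. These two algebras must come from the one-parameter families:
\begin{itemize}
\item ${\rm N}_{10}^{1/t}\to{\rm N}_{02}$ via $\big(te_1,\ e_3+\frac{2}{t}e_2,\ e_2\big)$;
\item ${\rm N}_{07}^{-1+t}\to{\rm N}_{03}$ via $\big(e_1+e_2,\ te_2,\ e_3\big)$.
\end{itemize}
Your other guesses agree with the paper: ${\rm N}_{11}^{1+t,\alpha}\to{\rm N}_{08}^{\alpha}$, ${\rm N}_{11}^{\alpha-t,-1+t}\to{\rm N}_{09}^{\alpha}$, ${\rm N}_{10}^{-\frac12+t}\to{\rm N}_{05}$, ${\rm N}_{10}^{-1+t}\to{\rm N}_{06}$, and ${\rm N}_{04}$ reached via ${\rm N}_{08}^{-1+t}$. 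However, these explicit degenerations are essentially the whole proof. Leaving them as \emph{natural guesses} means the covering statement is not yet proved.

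\textbf{Dimensions.} For generic $\alpha$ the derivation algebras of ${\rm N}_{07}^{\alpha}$ and ${\rm N}_{10}^{\alpha}$ are $1$-dimensional, not $2$-dimensional.
\begin{itemize}
\item In ${\rm N}_{07}^{\alpha}$, the relation $e_3\circ e_3=e_3$ together with the zero products forces $De_3=0$. Then $De_1=0$ for $\alpha\neq0,-1$, and $De_2\in\mathbb C e_2$ for $\alpha\neq-\frac12$.
\item In ${\rm N}_{10}^{\alpha}$, only $e_1\mapsto 0,\ e_2\mapsto 2e_2,\ e_3\mapsto e_3$ survives.
\end{itemize}
So these orbits are $8$-dimensional, and $\dim\mathcal C_2=\dim\mathcal C_3=9$: all four components have dimension $9$. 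The value $9$ for the variety still holds, but your irredundancy argument rests on the wrong picture and misses a case. You never exclude $\mathcal C_2=\mathcal C_3$. Your remark that $\mathcal C_2,\mathcal C_3$ could only lie in $\mathcal C_1$ or $\mathcal C_4$ overlooks this even under your own count.

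\textbf{How to repair irredundancy.} With the correct dimensions the fix is easy and makes your Lemma~\ref{main2} argument unnecessary. Two irreducible $9$-dimensional closures can be comparable only if they are equal. Equality would force the dense constructible sets $\bigcup_\alpha\mathcal O(\cdot)$ defining them to intersect. That would give an isomorphism between algebras from different families of the list ${\rm N}_{01}$--${\rm N}_{11}^{\alpha,\beta}$, which does not occur. Your commutativity and square-zero-ideal invariants are correct for generic parameters; they are simply more than is needed.
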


\begin{proof}

The dimensions of the orbit closures of the important algebras are as follows:
\begin{longtable}{lllllllll}
$\dim  \mathcal{C}_{1}$ & $\!\!=\!\!$ & 
$\dim  \mathcal{C}_{2}$ & $\!\!=\!\!$ & 
$\dim  \mathcal{C}_{3}$ & $\!\!=\!\!$ & 
$\dim  \mathcal{C}_{4}$ & $\!\!=\!\!$ & $9.$ \\
\end{longtable}

The remaining algebras of the list ${\rm N}_{01}-{\rm N}_{11}^{\alpha,\beta}$ lie in $\mathcal{C}_2\cup\mathcal{C}_3\cup\mathcal{C}_4$, as the following degenerations show.

\begin{longtable}{lcl|lcl}
${\rm N}_{10}^{\frac{1}{t}}$ & $\xrightarrow{\big(te_1,\ e_3+\frac{2}{t}e_2,\ e_2\big)}$ & ${\rm N}_{02}$ &
${\rm N}_{07}^{-1+t}$ & $\xrightarrow{\big(e_1+e_2,\ te_2,\ e_3\big)}$ & ${\rm N}_{03}$\\[0.5ex]
\hline
${\rm N}_{08}^{-1+t}$ & $\xrightarrow{\big(e_1+e_3,\ te_2,\ e_2+te_3\big)}$ & ${\rm N}_{04}$ &
${\rm N}_{10}^{-\frac{1}{2}+t}$ & $\xrightarrow{\big(e_1-\frac{2}{2t+1}e_2+e_3,\ te_2,\ te_3\big)}$ & ${\rm N}_{05}$\\[0.5ex]
\hline
${\rm N}_{10}^{-1+t}$ & $\xrightarrow{\big(e_1+\frac{1}{t}e_2,\ e_2,\ e_3\big)}$ & ${\rm N}_{06}$ &
${\rm N}_{11}^{1+t,\alpha}$ & $\xrightarrow{\big(e_1,\ e_3,\ \frac{1}{t}(e_3-e_2)\big)}$ & ${\rm N}_{08}^{\alpha}$\\[0.5ex]
\hline
\multicolumn{6}{c}{${\rm N}_{11}^{\alpha-t,-1+t}\qquad  \xrightarrow{\big(e_1+e_2,\ te_2,\ e_3\big)} \qquad {\rm N}_{09}^{\alpha}$}\\[0.5ex]
\hline
\end{longtable}
\end{proof}

 
\section{\large The geometric classification of Novikov--Poisson  algebras}

\begin{definition}
    A Novikov algebra is a vector space with a bilinear product $\circ$ satisfying 
    \begin{longtable}{rcl}
$(x \circ y) \circ z - x \circ (y \circ z)$&$ =$&$ (y \circ x) \circ z - y \circ (x \circ z),$\\
$(x \circ y) \circ z $&$=$&$ (x \circ z) \circ y.$
\end{longtable}
\end{definition}

\begin{definition}[see \cite{X96}]
A Novikov--Poisson algebra is a triple $({\rm A}, \cdot, \circ)$ in which $\cdot$ and $\circ$ are
bilinear operations on ${\rm A}$ such that
$({\rm A}, \cdot)$ is a commutative associative algebra;
 $({\rm A}, \circ)$ is a Novikov algebra;
and the following compatibility identities hold
\begin{longtable}{rcl}
$(x \cdot y) \circ z$&$ =$&$ x \cdot (y \circ z),$\\
$(x \circ y) \cdot z - (y \circ x) \cdot z$&$ =$&$ x \circ (y \cdot z) - y \circ (x \cdot z).$
\end{longtable}
\end{definition}

 \subsection{\normalsize The algebraic classification of $3$-dimensional Novikov--Poisson algebras}

\begin{theorem}[see \cite{ZHSC}]
Let $(\rm{A},\cdot,\circ)$ be a complex $3$-dimensional Novikov--Poisson algebra with nonzero multiplication $\cdot$. Then $(\rm{A},\cdot,\circ)$ is isomorphic to one of the following algebras:
\footnotesize

\begin{longtable}{lcl}

 $\mathrm{NP}_{01}^{\alpha,\beta}$&$:$&$
\begin{cases}
		e_1 \cdot e_3 = e_2; \\
		e_1 \circ e_1 = e_1, \quad e_1 \circ e_2 = -e_2, \quad e_1 \circ e_3 = e_1, \quad e_2 \circ e_1 = e_2, \\ 
        e_2 \circ e_3 = e_2, \quad
		e_3 \circ e_1 = \alpha e_2+e_3, \quad e_3 \circ e_2 = -e_2, \quad e_3 \circ e_3 = \beta e_2+e_3
	\end{cases}
	$\\

$\mathrm{NP}_{02}^{\alpha}$&$:$&
	$
	\begin{cases}
		e_1 \cdot e_3 = e_2; \\
		e_1 \circ e_1 = e_1, \quad e_1 \circ e_2 = -e_2, \quad e_2 \circ e_1 = e_2, \quad 
        e_3 \circ e_1 = \alpha e_2+e_3, \quad
		e_3 \circ e_3 = e_2
	\end{cases}
	$\\

	$\mathrm{NP}_{03}^{\alpha}$&$:$&
	$
	\begin{cases}
		e_1 \cdot e_3 = e_2; \\
		e_1 \circ e_1 = e_1, \quad e_1 \circ e_2 = -e_2, \quad e_2 \circ e_1 = e_2, \quad e_3 \circ e_1 = \alpha e_2+e_3
	\end{cases}
	$\\

$\mathrm{NP}_{04}^{\alpha,\beta,\gamma}$&$:$&
	$
	\begin{cases}
		e_1 \cdot e_3 = e_2; \\
		e_1 \circ e_1 = e_2, \quad e_1 \circ e_3 = \alpha e_2, \quad e_3 \circ e_1 = \beta e_2, \quad e_3 \circ e_3 = \gamma e_2
	\end{cases}
	$\\

$\mathrm{NP}_{05}^{\alpha,\beta}$&$:$&
	$
	\begin{cases}
		e_1 \cdot e_3 = e_2; \\
		e_1 \circ e_3 = \alpha e_2, \quad e_3 \circ e_1 = \beta e_2
	\end{cases}
	$\\

$\mathrm{NP}_{06}^{\alpha,\beta} $&$:$&
	$
	\begin{cases}
		e_1 \cdot e_1 = e_2, \quad e_1 \cdot e_2 = e_3; \\
		e_1 \circ e_1 = \alpha e_1, \quad e_1 \circ e_2 = e_2+\beta e_3, \quad e_1 \circ e_3 = (2-\alpha) e_3, \\
		e_2 \circ e_1 = \alpha e_2, \quad e_2 \circ e_2 = e_3, \quad e_3 \circ e_1 = \alpha e_3
	\end{cases}$\\

$\mathrm{NP}_{07}^{\alpha,\beta}$&$:$&
	$
	\begin{cases}
		e_1 \cdot e_1 = e_2, \quad e_1 \cdot e_2 = e_3; \\
		e_1 \circ e_1 = 2e_1+\alpha e_3, \quad e_1 \circ e_2 = e_2+\beta e_3, \quad e_2 \circ e_1 = 2e_2, \
		e_2 \circ e_2 = e_3, \quad e_3 \circ e_1 = 2e_3
	\end{cases}
	$\\

$\mathrm{NP}_{08}^{\alpha}$&$:$&
	$
	\begin{cases}
		e_1 \cdot e_1 = e_2, \quad e_1 \cdot e_2 = e_3 ;\\
		e_1 \circ e_1 = e_1+\alpha e_2, \quad e_1 \circ e_3 = -e_3, \quad e_2 \circ e_1 = e_2+\alpha e_3, \quad e_3 \circ e_1 = e_3
	\end{cases}
	$\\

$\mathrm{NP}_{09}^{\alpha,\beta}$&$:$&
	$
	\begin{cases}
		e_1 \cdot e_1 = e_2, \quad e_1 \cdot e_2 = e_3; \\
		e_1 \circ e_1 = \alpha e_2, \quad e_1 \circ e_2 = \beta e_3, \quad e_2 \circ e_1 = \alpha e_3
	\end{cases}
	$\\

$\mathrm{NP}_{10}^{\alpha}$&$:$&
	$
	\begin{cases}
		e_1 \cdot e_1 = e_2, \quad e_1 \cdot e_2 = e_3; \\
		e_1 \circ e_1 = \alpha e_2+e_3, \quad e_1 \circ e_2 = \alpha e_3, \quad e_2 \circ e_1 = \alpha e_3
	\end{cases}
	$\\

$\mathrm{NP}_{11}^{\alpha,\beta,\gamma}$&$:$&
	$
	\begin{cases}
		e_1 \cdot e_3 = e_1, \quad e_2 \cdot e_3 = e_2, \quad e_3 \cdot e_3 = e_3 ;\\
		e_1 \circ e_3 = \alpha e_1, \quad e_2 \circ e_3 = \alpha e_2, \quad e_3 \circ e_1 = e_2, \quad  e_3 \circ e_2 = \beta e_1+\gamma e_2, \quad 
		e_3 \circ e_3 = e_1+\alpha e_3
	\end{cases}
	$\\

$\mathrm{NP}_{12}^{\alpha,\beta,\gamma}$&$:$&
	$
	\begin{cases}
		e_1 \cdot e_3 = e_1, \quad e_2 \cdot e_3 = e_2, \quad e_3 \cdot e_3 = e_3; \\
		e_1 \circ e_3 = \alpha e_1, \quad e_2 \circ e_3 = \alpha e_2, \quad e_3 \circ e_1 = \beta e_1,  \quad 
        e_3 \circ e_2 = e_1+\gamma e_2, \quad 
		e_3 \circ e_3 = e_1+\alpha e_3
	\end{cases}
	$\\

$\mathrm{NP}_{13}^{\alpha,\beta}$&$:$&
	$
	\begin{cases}
		e_1 \cdot e_3 = e_1, \quad e_2 \cdot e_3 = e_2, \quad e_3 \cdot e_3 = e_3; \\
		e_1 \circ e_3 = \alpha e_1, \quad e_2 \circ e_3 = \alpha e_2, \quad e_3 \circ e_1 = \beta e_1, \quad e_3 \circ e_2 = \beta e_2,\quad
		e_3 \circ e_3 = e_1+\alpha e_3
	\end{cases}
	$\\

$\mathrm{NP}_{14}^{\alpha,\beta, \gamma}$&$:$&
	$
	\begin{cases}
		e_1 \cdot e_3 = e_1, \quad e_2 \cdot e_3 = e_2, \quad e_3 \cdot e_3 = e_3; \\
		e_1 \circ e_3 = \alpha e_1, \quad e_2 \circ e_3 = \alpha e_2, \quad e_3 \circ e_1 = \beta e_2, \quad 
		e_3 \circ e_2 = e_1+\gamma e_2, \quad e_3 \circ e_3 = \alpha e_3
	\end{cases}
	$\\

$\mathrm{NP}_{15}^{\alpha,\beta}$&$:$&
	$
	\begin{cases}
		e_1 \cdot e_3 = e_1, \quad e_2 \cdot e_3 = e_2, \quad e_3 \cdot e_3 = e_3; \\
		e_1 \circ e_3 = \alpha e_1, \quad e_2 \circ e_3 = \alpha e_2, \quad e_3 \circ e_1 = \beta e_1, \quad 
		e_3 \circ e_2 = \beta e_2, \quad e_3 \circ e_3 = \alpha e_3
	\end{cases}
	$\\

$\mathrm{NP}_{16}^{\alpha,\beta}$&$:$&
	$
	\begin{cases}
		e_1 \cdot e_3 = e_2, \quad e_2 \cdot e_3 = e_2, \quad e_3 \cdot e_3 = e_3; \\
		e_1 \circ e_1 = -e_1+e_2, \quad e_1 \circ e_3 = \alpha e_2, \quad e_2 \circ e_3 = \alpha e_2, \\  
        e_3 \circ e_1 = \beta e_2, \quad
		e_3 \circ e_2 = \beta e_2, \quad e_3 \circ e_3 = e_2+\alpha e_3
	\end{cases}
	$\\

	$\mathrm{NP}_{17}^{\alpha,\beta}$&$:$&
	$
	\begin{cases}
		e_1 \cdot e_3 = e_2, \quad e_2 \cdot e_3 = e_2, \quad e_3 \cdot e_3 = e_3; \\
		e_1 \circ e_3 = \alpha e_2, \quad e_2 \circ e_3 = \alpha e_2, \quad e_3 \circ e_1 = \beta e_2, \quad e_3 \circ e_2 = \beta e_2, \quad
		e_3 \circ e_3 = e_2+\alpha e_3
	\end{cases}
	$\\

	$\mathrm{NP}_{18}^{\alpha,\beta}$&$:$&
	$
	\begin{cases}
		e_1 \cdot e_3 = e_2, \quad e_2 \cdot e_3 = e_2, \quad e_3 \cdot e_3 = e_3; \\
		e_1 \circ e_1 = -e_1+e_2, \quad e_1 \circ e_3 = \alpha e_2, \quad e_2 \circ e_3 = \alpha e_2, \quad 		e_3 \circ e_1 = \beta e_2, \\  e_3 \circ e_2 = \beta e_2, \quad e_3 \circ e_3 = \alpha e_3
	\end{cases}
	$\\

	$\mathrm{NP}_{19}^{\alpha,\beta}$&$:$&
	$
	\begin{cases}
		e_1 \cdot e_3 = e_2, \quad e_2 \cdot e_3 = e_2, \quad e_3 \cdot e_3 = e_3; \\
		e_1 \circ e_3 = \alpha e_2, \quad e_2 \circ e_3 = \alpha e_2, \quad e_3 \circ e_1 = \beta e_2, \quad 
		e_3 \circ e_2 = \beta e_2, \quad e_3 \circ e_3 = \alpha e_3
	\end{cases}
	$\\

	$\mathrm{NP}_{20}^{\alpha,\beta,\gamma}$&$:$&
	$
	\begin{cases}
		e_1 \cdot e_1 = e_2, \quad e_1 \cdot e_3 = e_1, \quad e_2 \cdot e_3 = e_2, \quad e_3 \cdot e_3 = e_3; \\
        
		e_1 \circ e_1 = (\frac{\alpha+\beta}{2}) e_2, \quad e_1 \circ e_3 = \beta e_1+e_2, \quad e_2 \circ e_3 = \beta e_2, \\
		e_3 \circ e_1 = (\frac{\alpha+\beta}{2}) e_1+\gamma e_2, \quad e_3 \circ e_2 = \alpha e_2, \quad e_3 \circ e_3 = e_1+\beta e_3
	\end{cases}
	$\\

	$\mathrm{NP}_{21}^{\alpha,\beta}$&$:$&
	$
	\begin{cases}
		e_1 \cdot e_1 = e_2, \quad e_1 \cdot e_3 = e_1, \quad e_2 \cdot e_3 = e_2, \quad e_3 \cdot e_3 = e_3; \\
		e_1 \circ e_1 = (\frac{\alpha+\beta}{2}) e_2, \quad e_1 \circ e_3 = \beta e_1, \quad e_2 \circ e_3 = \beta e_2, \\
		e_3 \circ e_1 = (\frac{\alpha+\beta}{2}) e_1, \quad e_3 \circ e_2 = \alpha e_2, \quad e_3 \circ e_3 = e_2+\beta e_3
	\end{cases}
	$\\

	$\mathrm{NP}_{22}^{\alpha,\beta} $&$:$&
	$
	\begin{cases}
		e_1 \cdot e_1 = e_2, \quad e_1 \cdot e_3 = e_1, \quad e_2 \cdot e_3 = e_2, \quad e_3 \cdot e_3 = e_3; \\
		e_1 \circ e_1 = \alpha e_2, \quad e_1 \circ e_3 = \alpha e_1, \quad e_2 \circ e_3 = \alpha e_2, \\ 
        e_3 \circ e_1 = \alpha e_1+\beta e_2, \quad 
		e_3 \circ e_2 = \alpha e_2, \quad e_3 \circ e_3 = e_2+\alpha e_3
	\end{cases}
	$\\

	$\mathrm{NP}_{23}^{\alpha,\beta}$&$:$&
	$
	\begin{cases}
		e_1 \cdot e_1 = e_2, \quad e_1 \cdot e_3 = e_1, \quad e_2 \cdot e_3 = e_2, \quad e_3 \cdot e_3 = e_3; \\
		e_1 \circ e_1 = (\frac{\alpha+\beta}{2}) e_2, \quad e_1 \circ e_3 = \alpha e_1, \quad e_2 \circ e_3 = \alpha e_2, \\
		e_3 \circ e_1 = (\frac{\alpha+\beta}{2}) e_1, \quad e_3 \circ e_2 = \beta e_2, \quad e_3 \circ e_3 = \alpha e_3
	\end{cases}
	$\\

	$\mathrm{NP}_{24}^{\alpha}$&$:$&
	$
	\begin{cases}
		e_1 \cdot e_1 = e_2, \quad e_1 \cdot e_3 = e_1, \quad e_2 \cdot e_3 = e_2, \quad e_3 \cdot e_3 = e_3; \\
		e_1 \circ e_1 = \alpha e_2, \quad e_1 \circ e_3 = \alpha e_1, \quad e_2 \circ e_3 = \alpha e_2, \\ 
        e_3 \circ e_1 = \alpha e_1+e_2, \quad
		e_3 \circ e_2 = \alpha e_2, \quad e_3 \circ e_3 = \alpha e_3
	\end{cases}
	$\\

$\mathrm{NP}_{25}^{\alpha,\beta,\gamma}$&$:$&
	$
	\begin{cases}
		e_1 \cdot e_1 = e_1, \quad e_2 \cdot e_2 = e_2, \quad e_3 \cdot e_3 = e_3; \\
		e_1 \circ e_1 = \alpha e_1, \quad e_2 \circ e_2 = \beta e_2, \quad e_3 \circ e_3 = \gamma e_3
	\end{cases}
$\\

	$\mathrm{NP}_{26}^{\alpha,\beta,\gamma}$&$:$&
	$
	\begin{cases}
		e_1 \cdot e_1 = e_1, \quad e_2 \cdot e_2 = e_2, \quad e_1 \cdot e_3 = e_3; \\
		e_1 \circ e_1 = \alpha e_1 + e_3, \quad e_1 \circ e_3 = \beta e_3, \quad e_2 \circ e_2 = \gamma e_2, \quad e_3 \circ e_1 = \alpha e_3
	\end{cases}
	$\\

	$\mathrm{NP}_{27}^{\alpha,\beta,\gamma}$&$:$&
	$
	\begin{cases}
		e_1 \cdot e_1 = e_1, \quad e_2 \cdot e_2 = e_2, \quad e_1 \cdot e_3 = e_3 ;\\
		e_1 \circ e_1 = \alpha e_1, \quad e_1 \circ e_3 = \beta e_3, \quad e_2 \circ e_2 = \gamma e_2, \quad e_3 \circ e_1 = \alpha e_3
	\end{cases}
	$\\

	$\mathrm{NP}_{28}^{\alpha,\beta}$&$:$&
	$
	\begin{cases}
		e_1 \cdot e_1 = e_1, \quad e_2 \cdot e_2 = e_2; \\
		e_1 \circ e_1 = \alpha e_1, \quad e_2 \circ e_2 = \beta e_2, \quad e_3 \circ e_3 = e_3
	\end{cases}
	$\\

	$\mathrm{NP}_{29}^{\alpha,\beta}$&$:$&
	$
	\begin{cases}
		e_1 \cdot e_1 = e_1, \quad e_2 \cdot e_2 = e_2 ;\\
		e_1 \circ e_1 = \alpha e_1, \quad e_2 \circ e_2 = \beta e_2
	\end{cases}
	$\\

	$\mathrm{NP}_{30}^{\alpha,\beta}$&$:$&
	$
	\begin{cases}
		e_1 \cdot e_1 = e_1, \quad e_2 \cdot e_2 = e_3; \\
		e_1 \circ e_1 = \alpha e_1, \quad e_2 \circ e_2 = \beta e_2, \quad e_2 \circ e_3 = e_3, \quad e_3 \circ e_2 = \beta e_3
	\end{cases}
	$\\

	$\mathrm{NP}_{31}^{\alpha,\beta}$&$:$&
	$
	\begin{cases}
		e_1 \cdot e_1 = e_1, \quad e_2 \cdot e_2 = e_3 ;\\
		e_1 \circ e_1 = \alpha e_1, \quad e_2 \circ e_2 = e_2 + \beta e_3, \quad e_3 \circ e_2 = e_3
	\end{cases}
	$\\

	$\mathrm{NP}_{32}^{\alpha,\beta}$&$:$&
	$
	\begin{cases}
		e_1 \cdot e_1 = e_1, \quad e_2 \cdot e_2 = e_3; \\
		e_1 \circ e_1 = \alpha e_1, \quad e_2 \circ e_2 = \beta e_3
	\end{cases}
	$\\

	$\mathrm{NP}_{33}^{\alpha,\beta}$&$:$&
	$
	\begin{cases}
		e_1 \cdot e_1 = e_1; \\
		e_1 \circ e_1 = \alpha e_1, \quad e_2 \circ e_2 = \beta e_2, \quad e_2 \circ e_3 = e_3, \quad e_3 \circ e_2 = \beta e_3
	\end{cases}
	$\\

$\mathrm{NP}_{34}^{\alpha}$&$:$&
$
\begin{cases}
e_1 \cdot e_1 = e_1; \\
e_1 \circ e_1 = \alpha e_1, \quad e_2 \circ e_2 = e_2+e_3, \quad e_3 \circ e_2 = e_3
\end{cases}
$\\

$\mathrm{NP}_{35}^{\alpha}$&$:$&
$
\begin{cases}
e_1 \cdot e_1 = e_1; \\
e_1 \circ e_1 = \alpha e_1, \quad e_2 \circ e_2 = e_2, \quad e_3 \circ e_2 = e_3
\end{cases}
$\\

$\mathrm{NP}_{36}^{\alpha}$&$:$&
$
\begin{cases}
e_1 \cdot e_1 = e_1; \\
e_1 \circ e_1 = \alpha e_1, \quad e_2 \circ e_2 = e_3
\end{cases}
$\\

$\mathrm{NP}_{37}^{\alpha}$&$:$&
$
\begin{cases}
e_1 \cdot e_1 = e_1; \\
e_1 \circ e_1 = \alpha e_1
\end{cases}
$\\

$\mathrm{NP}_{38}^{\alpha}$&$:$&
$
\begin{cases}
e_1 \cdot e_1 = e_1; \\
e_1 \circ e_1 = \alpha e_1, \quad e_2 \circ e_2 = e_2, \quad e_3 \circ e_3 = e_3
\end{cases}
$\\

$\mathrm{NP}_{39}^{\alpha}$&$:$&
$
\begin{cases}
e_1 \cdot e_1 = e_1; \\
e_1 \circ e_1 = \alpha e_1, \quad e_2 \circ e_2 = e_2
\end{cases}
$\\

	$\mathrm{NP}_{40}^{\alpha,\beta} $&$:$&
	$
	\begin{cases}
		e_1 \cdot e_1 = e_2; \\
		e_1 \circ e_1 = e_1, \quad e_1 \circ e_2 = \alpha e_2, \quad e_1 \circ e_3 = \beta e_3, \quad e_2 \circ e_1 = e_2, \quad e_3 \circ e_1 = e_3
	\end{cases}
	$\\

	$\mathrm{NP}_{41}^{\alpha}$&$:$&
	$
	\begin{cases}
		e_1 \cdot e_1 = e_2; \\
		e_1 \circ e_1 = e_1, \quad e_1 \circ e_2 = \alpha e_2, \quad e_1 \circ e_3 = e_2 + \alpha e_3, \quad e_2 \circ e_1 = e_2, \quad e_3 \circ e_1 = e_3
	\end{cases}
	$\\

	$\mathrm{NP}_{42}^{\alpha,\beta} $&$:$&
	$
	\begin{cases}
		e_1 \cdot e_1 = e_2; \\
		e_1 \circ e_1 = e_1 + \alpha e_2, \quad e_1 \circ e_3 = \beta e_3, \quad e_2 \circ e_1 = e_2, \quad e_3 \circ e_1 = e_3
	\end{cases}
	$\\

	$\mathrm{NP}_{43}^{\alpha,\beta} $&$:$&
	$
	\begin{cases}
		e_1 \cdot e_1 = e_2; \\
		e_1 \circ e_1 = e_1, \quad e_1 \circ e_2 = \alpha e_2 + e_3, \quad e_1 \circ e_3 = \beta e_2, \quad e_2 \circ e_1 = e_2, \quad e_3 \circ e_1 = e_3
	\end{cases}
	$\\

	$\mathrm{NP}_{44}^{\alpha,\beta}$&$:$&
	$
	\begin{cases}
		e_1 \cdot e_1 = e_2; \\
		e_1 \circ e_1 = e_1 + \alpha e_2, \quad e_1 \circ e_2 = \beta e_2 + e_3, \quad e_2 \circ e_1 = e_2, \quad e_3 \circ e_1 = e_3
	\end{cases}
	$\\

	$\mathrm{NP}_{45}^{\alpha} $&$:$&
	$
	\begin{cases}
		e_1 \cdot e_1 = e_2; \\
		e_1 \circ e_1 = e_1 + e_3, \quad e_1 \circ e_2 = \alpha e_2, \quad e_2 \circ e_1 = e_2, \quad e_3 \circ e_1 = e_3
	\end{cases}
	$\\

	$\mathrm{NP}_{46}^{\alpha}$&$:$&
	$
	\begin{cases}
		e_1 \cdot e_1 = e_2; \\
		e_1 \circ e_1 = e_1 + e_3, \quad e_1 \circ e_3 = \alpha e_2, \quad e_2 \circ e_1 = e_2, \quad e_3 \circ e_1 = e_3
	\end{cases}
	$\\

	$\mathrm{NP}_{47}^{\alpha}$&$:$&
	$
	\begin{cases}
		e_1 \cdot e_1 = e_2; \\
		e_1 \circ e_1 = \alpha e_2, \quad e_1 \circ e_2 = e_3, \quad e_1 \circ e_3 = e_3
	\end{cases}
	$\\

	$\mathrm{NP}_{48}^{\alpha}$&$:$&
	$
	\begin{cases}
		e_1 \cdot e_1 = e_2;\\
		e_1 \circ e_2 = e_2, \quad e_1 \circ e_3 = \alpha e_3
	\end{cases}
	$\\

	$\mathrm{NP}_{49}
	$&$:$&$
	\begin{cases}
		e_1 \cdot e_1 = e_2; \\
		e_1 \circ e_2 = e_2, \quad e_1 \circ e_3 = e_2 + e_3
	\end{cases}
	$\\

	$\mathrm{NP}_{50}^{\alpha}$&$:$&
	$
	\begin{cases}
		e_1 \cdot e_1 = e_2; \\
		e_1 \circ e_1 = \alpha e_2, \quad e_1 \circ e_3 = e_3
	\end{cases}
	$\\

	$\mathrm{NP}_{51}^{\alpha}$&$:$&
	$
	\begin{cases}
		e_1 \cdot e_1 = e_2; \\
		e_1 \circ e_2 = \alpha e_2 + e_3, \quad e_1 \circ e_3 = e_2
	\end{cases}
	$\\

	$\mathrm{NP}_{52}$&$:$&
	$
	\begin{cases}
		e_1 \cdot e_1 = e_2; \\
		e_1 \circ e_1 = e_3, \quad e_1 \circ e_3 = e_2
	\end{cases}
	$\\

	$\mathrm{NP}_{53}$&$:$&
	$
	\begin{cases}
		e_1 \cdot e_1 = e_2; \\
		e_1 \circ e_3 = e_2
	\end{cases}
	$\\

	$\mathrm{NP}_{54}^{\alpha}$&$:$&
	$
	\begin{cases}
		e_1 \cdot e_1 = e_2; \\
		e_1 \circ e_1 = \alpha e_2, \quad e_1 \circ e_2 = e_3
	\end{cases}
	$\\

	$\mathrm{NP}_{55}$&$:$&
	$
	\begin{cases}
		e_1 \cdot e_1 = e_2; \\
		e_1 \circ e_1 = e_3, \quad e_1 \circ e_2 = e_2
	\end{cases}
	$\\

	$\mathrm{NP}_{56}$&$:$&
	$
	\begin{cases}
		e_1 \cdot e_1 = e_2; \\
		e_1 \circ e_1 = e_3
	\end{cases}
	$\\

	$\mathrm{NP}_{57}^{\alpha}$&$:$&
	$
	\begin{cases}
		e_1 \cdot e_1 = e_2; \\
		e_1 \circ e_1 = \alpha e_2
	\end{cases}
	$\\

	$\mathrm{NP}_{58}^{\alpha}$&$:$&
	$
	\begin{cases}
		e_1 \cdot e_1 = e_2; \\
		e_1 \circ e_1 = \alpha e_2, \quad e_1 \circ e_3 = e_1+e_2, \quad e_2 \circ e_3 = e_2, \quad e_3 \circ e_2 = -e_2, \quad e_3 \circ e_3 = e_3
	\end{cases}
	$\\

	$\mathrm{NP}_{59}^{\alpha}$&$:$&
	$
	\begin{cases}
		e_1 \cdot e_1 = e_2; \\
		e_1 \circ e_1 = \alpha e_2, \quad e_1 \circ e_3 = e_1, \quad e_2 \circ e_3 = e_2, \quad e_3 \circ e_2 = -e_2, \quad e_3 \circ e_3 = e_3
	\end{cases}
	$\\

	$\mathrm{NP}_{60}^{\alpha} $&$:$&
	$
	\begin{cases}
		e_1 \cdot e_1 = e_2; \\
		e_1 \circ e_1 = \alpha e_1, \quad e_1 \circ e_2 = (2-\alpha) e_2, \quad e_1 \circ e_3 = e_3, \\ e_2 \circ e_1 = \alpha e_2,\quad  e_3 \circ e_1 = \alpha e_3, \quad e_3 \circ e_3 = e_2
	\end{cases}
	$\\

	$\mathrm{NP}_{61}^{\alpha}$&$:$&
	$
	\begin{cases}
		e_1 \cdot e_1 = e_2; \\
		e_1 \circ e_1 = e_1, \quad e_1 \circ e_2 = e_2, \quad e_1 \circ e_3 = \alpha e_2+e_3, \\ 
        e_2 \circ e_1 = e_2, \quad  e_3 \circ e_1 = e_3, \quad e_3 \circ e_3 = e_2
	\end{cases}
	$\\

	$\mathrm{NP}_{62}^{\alpha}$&$:$&
	$
	\begin{cases}
		e_1 \cdot e_1 = e_2; \\
		e_1 \circ e_1 = 2e_1+\alpha e_2, \quad e_1 \circ e_3 = e_3, \quad e_2 \circ e_1 = 2e_2, \quad e_3 \circ e_1 = 2e_3, \quad e_3 \circ e_3 = e_2
	\end{cases}
	$\\

	$\mathrm{NP}_{63}$&$:$&
	$
	\begin{cases}
		e_1 \cdot e_1 = e_2; \\
		e_1 \circ e_1 = e_1, \quad e_1 \circ e_2 = -e_2, \quad e_2 \circ e_1 = e_2, \quad e_3 \circ e_1 = e_3, \quad e_3 \circ e_3 = e_2
	\end{cases}
	$\\

	$\mathrm{NP}_{64}^{\alpha,\beta}$&$:$&
	$
	\begin{cases}
		e_1 \cdot e_1 = e_2; \\
		e_1 \circ e_1 = \alpha e_2, \quad e_1 \circ e_3 = \beta e_2, \quad e_3 \circ e_3 = e_2
	\end{cases}
	$\\

	$\mathrm{NP}_{65}^{\alpha}$&$:$&
	$
	\begin{cases}
		e_1 \cdot e_1 = e_2; \\
		e_1 \circ e_1 = \alpha e_1, \quad e_1 \circ e_2 = e_2, \quad e_2 \circ e_1 = \alpha e_2, \quad e_3 \circ e_3 = e_3
	\end{cases}
	$\\

	$\mathrm{NP}_{66}^{\alpha}$&$:$&
	$
	\begin{cases}
		e_1 \cdot e_1 = e_2; \\
		e_1 \circ e_1 = e_1+\alpha e_2, \quad e_2 \circ e_1 = e_2, \quad e_3 \circ e_3 = e_3
	\end{cases}
	$\\

	$\mathrm{NP}_{67}^{\alpha}$&$:$&
	$
	\begin{cases}
		e_1 \cdot e_1 = e_2; \\
		e_1 \circ e_1 = \alpha e_2, \quad e_3 \circ e_3 = e_3
	\end{cases}
	$\\

	$\mathrm{NP}_{68}^{\alpha}$&$:$&
	$
	\begin{cases}
		e_1 \cdot e_1 = e_2; \\
		e_1 \circ e_1 = e_1+\alpha e_3, \quad e_1 \circ e_2 = -e_2, \quad e_2 \circ e_1 = e_2, \quad e_3 \circ e_1 = e_2+e_3
	\end{cases}
	$\\

	$\mathrm{NP}_{69}^{\alpha}$&$:$&
	$
	\begin{cases}
		e_1 \cdot e_1 = e_2; \\
		e_1 \circ e_1 = e_3, \quad e_1 \circ e_3 = \alpha e_2, \quad e_3 \circ e_1 = e_2
	\end{cases}
	$\\

	$\mathrm{NP}_{70}^{\alpha} $&$:$&
	$
	\begin{cases}
		e_1 \cdot e_1 = e_2; \\
		e_1 \circ e_3 = \alpha e_2, \quad e_3 \circ e_1 = e_2
	\end{cases}
	$\\

	$\mathrm{NP}_{71}^{\alpha}$&$:$&
	$
	\begin{cases}
		e_1 \cdot e_1 = e_2 ;\\
		e_1 \circ e_1 = \alpha e_2, \quad e_1 \circ e_3 = -e_2, \quad e_3 \circ e_1 = e_2
	\end{cases}
	$\\

	$\mathrm{NP}_{72}^{\alpha} $&$:$&
	$
	\begin{cases}
		e_1 \cdot e_1 = e_2 ;\\
		e_1 \circ e_1 = e_1, \quad e_1 \circ e_2 = \alpha e_2, \quad e_2 \circ e_1 = e_2
	\end{cases}
	$\\

	$\mathrm{NP}_{73}^{\alpha}$&$:$&
	$
	\begin{cases}
		e_1 \cdot e_1 = e_2; \\
		e_1 \circ e_1 = e_1+\alpha e_2, \quad e_2 \circ e_1 = e_2
	\end{cases}
	$
\\
\end{longtable}
\normalsize

\end{theorem}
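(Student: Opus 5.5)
The plan is the standard ``fix one multiplication, then classify the compatible second multiplication'' approach. First I would use the known list of complex commutative associative algebras of dimension $3$ with nonzero product. Up to isomorphism these are:
\begin{itemize}
\item the nilpotent ones: $e_1\cdot e_3=e_2$, $e_1\cdot e_1=e_2$, and the null-filiform $e_1\cdot e_1=e_2,\ e_1\cdot e_2=e_3$;
\item the non-nilpotent ones: the unital $\mathbb C\oplus\mathbb C\oplus\mathbb C$, $\mathbb C\oplus \mathbb C[x]/(x^2)$, $\mathbb C[x]/(x^3)$, $\mathbb C[x,y]/(x,y)^2$ with unit adjoined;
\item the non-unital extensions, such as $e_1\cdot e_3=e_1,\ e_2\cdot e_3=e_2,\ e_3\cdot e_3=e_3$, then $e_1\cdot e_3=e_2,\ e_2\cdot e_3=e_2,\ e_3\cdot e_3=e_3$, then $e_1\cdot e_1=e_2,\ e_1\cdot e_3=e_1,\ e_2\cdot e_3=e_2,\ e_3\cdot e_3=e_3$, and the products of $\mathbb C$ with $2$-dimensional algebras, e.g.\ $e_1\cdot e_1=e_1$, $e_2\cdot e_2=e_2$ or $e_2\cdot e_2=e_3$.
\end{itemize}
These are exactly the $\cdot$-parts appearing in the blocks $\mathrm{NP}_{01}$--$\mathrm{NP}_{73}$. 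For each such $(\mathrm A,\cdot)$ I write a general $\circ$ with $27$ unknown structure constants. I impose the two Novikov identities and the two compatibility identities on basis triples, and solve the resulting polynomial system.

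The compatibility identity $(x\cdot y)\circ z=x\cdot(y\circ z)$ is linear in $\circ$ and very restrictive. When $(\mathrm A,\cdot)$ has a unit $1$, it gives $y\circ z=y\cdot(1\circ z)$. So $\circ$ is determined by the linear map $z\mapsto 1\circ z$. The remaining identities then say that this map is a derivation of $(\mathrm A,\cdot)$ up to a multiplication operator, which is the classical Xu description $x\circ y=x\cdot(\partial y+ c\cdot y)$. Hence in the unital cases ($\mathrm{NP}_{20}$--$\mathrm{NP}_{39}$, after splitting off direct factors) I would parametrize solutions by pairs (derivation, element) rather than by raw constants.

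In the non-unital cases I use the same identity with $x$ an idempotent (e.g.\ $e_3$ or $e_1$) to express most of $\circ$ through a few values. I then solve the quadratic Novikov identities on the remaining small set of unknowns. In the nilpotent cases ($e_1\cdot e_3=e_2$, $e_1\cdot e_1=e_2$, null-filiform), the identity forces $\circ$ to map into annihilator-related subspaces in a controlled way. The second compatibility identity then couples the remaining constants, and one obtains finitely many branches of solutions.

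Next I act on each branch by $\mathrm{Aut}(\mathrm A,\cdot)$, which is computed explicitly for each commutative associative algebra (e.g.\ upper-triangular unipotent groups with a torus for the nilpotent ones, permutations and scalings for $\mathbb C^3$). I use this group to normalize $\circ$: scale nonzero constants to $1$, kill constants by unipotent substitutions, and separate branches by orbit invariants. Examples of such invariants are the rank of the left/right $\circ$-multiplication by distinguished elements, whether $\mathrm{Ann}(\mathrm A,\cdot)$ is a $\circ$-ideal, and eigenvalues of $R^\circ_{e}$ on idempotents. This yields the parametric families with the stated parameters. Finally I check that distinct parameter values (modulo the residual finite symmetries, e.g.\ swapping $\alpha,\beta$ in $\mathrm{NP}_{25}$ by permuting idempotents) give nonisomorphic algebras. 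This uses the same invariants.

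The main obstacle I expect is the nilpotent case $e_1\cdot e_1=e_2$ with $e_3$ in the annihilator, which produces $\mathrm{NP}_{40}$--$\mathrm{NP}_{73}$. Here $\mathrm{Aut}(\mathrm A,\cdot)$ is large but non-reductive, and the Novikov structure is nearly unconstrained by $\cdot$, so the orbit problem essentially contains the classification of $3$-dimensional Novikov algebras with a marked $\cdot$-compatible square map. I would attack it by first splitting according to the Novikov algebra $(\mathrm A,\circ)$, using the list in Proposition \ref{propN} and \cite{BB14}. Within each type I would determine which subspaces $\langle e_1\rangle$, $\langle e_2\rangle=\langle e_1\cdot e_1\rangle$ are admissible, and normalize the remaining constants by the stabilizer.
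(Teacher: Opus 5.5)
The paper gives no proof of this statement; it is imported from \cite{ZHSC}. So your outline has to stand on its own, and at present it does not.

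\textbf{What is correct.} The one structural step you actually argue is right. Putting $y=1$ in $(x\cdot y)\circ z=x\cdot(y\circ z)$ gives $x\circ z=x\cdot D(z)$ with $D=1\circ(-)$. Putting $y=1$ in the second compatibility identity shows that $\partial=D-L_{D(1)}$ is a derivation. Conversely, every pair $(\partial,c)$ works: the associator of $x\circ y=x\cdot(\partial y+c\cdot y)$ is $-x\cdot y\cdot\partial(\partial z+c\cdot z)$, which is symmetric in $x,y$.

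\textbf{Main gap.} Everything after that (solving the systems, normalizing by $\mathrm{Aut}(\mathrm A,\cdot)$, and ``this yields the parametric families with the stated parameters'') restates the theorem rather than proving it. For a classification result, that computation is the whole content. The one concrete tool you name, for the block $\mathrm{NP}_{40}$--$\mathrm{NP}_{73}$, would fail. The list $\mathrm{N}_{01}$--$\mathrm{N}_{11}^{\alpha,\beta}$ in Proposition~\ref{propN} is not an isomorphism classification; it is only a set of algebras whose orbit closures cover the variety. None of its members is nilpotent, since each has a left multiplication with a nonzero eigenvalue. By contrast, the Novikov parts of $\mathrm{NP}_{52}$, $\mathrm{NP}_{53}$, $\mathrm{NP}_{56}$, $\mathrm{NP}_{57}^{\alpha}$, $\mathrm{NP}_{64}^{\alpha,\beta}$ and $\mathrm{NP}_{69}^{\alpha}$--$\mathrm{NP}_{71}^{\alpha}$ are nilpotent (for $\mathrm{NP}_{57}^{0}$ one even has $\circ=0$). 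If you split by Novikov type, you need the complete Bai--Meng classification of $3$-dimensional Novikov algebras. For each type you then need the $\mathrm{Aut}(\mathrm A,\circ)$-orbits on compatible products $x\cdot y=f(x)f(y)v$ with $f(v)=0$. The relevant data are the plane $\ker f=\mathrm{Ann}(\mathrm A,\cdot)$, the line $\langle v\rangle=\mathrm A\cdot\mathrm A$ inside it, and a scalar. The line $\langle e_1\rangle$ you propose to track is not an invariant.

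\textbf{Set-up errors and a missing lemma.} The algebras $e_1\cdot e_3=e_1,\ e_2\cdot e_3=e_2,\ e_3\cdot e_3=e_3$ and $e_1\cdot e_1=e_2,\ e_1\cdot e_3=e_1,\ e_2\cdot e_3=e_2,\ e_3\cdot e_3=e_3$ are unital with unit $e_3$. They are $\mathbb C[x,y]/(x,y)^2$ and $\mathbb C[x]/(x^3)$, which you also list as unital. Note that $\mathbb C[x,y]/(x,y)^2$ is already unital, so ``with unit adjoined'' is wrong. The correct list has eleven algebras:
\begin{itemize}
\item three nilpotent ones;
\item four unital ones (blocks $\mathrm{NP}_{11}$--$\mathrm{NP}_{15}$ and $\mathrm{NP}_{20}$--$\mathrm{NP}_{27}$);
\item four of the form $B\times N$ with $B$ unital and $N$ nilpotent ($\mathrm{NP}_{16}$--$\mathrm{NP}_{19}$ and $\mathrm{NP}_{28}$--$\mathrm{NP}_{39}$).
\end{itemize}

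``Splitting off direct factors'' requires a lemma you do not state, since a priori $\circ$ need not respect $B\times N$. It does. Let $e$ be the unit of $B$, so $N=\{a:\ e\cdot a=0\}$.
\begin{itemize}
\item The first identity with $(x,y)=(b,e)$ gives $b\circ z=b\cdot(e\circ z)\in B$, and with $(x,y)=(e,n)$ it gives $e\cdot(n\circ z)=0$. So $B\circ\mathrm A\subseteq B$ and $N\circ\mathrm A\subseteq N$.
\item The second identity with $(x,y,z)=(e,n,e)$ gives $e\circ n=-n\circ e$, which lies in $B\cap N=0$. Hence $B\circ N=0$.
\item The second identity with $(x,y,z)=(n,e,b)$ gives $n\circ b=0$.
\end{itemize}
So $\mathrm A=B\times N$ as Novikov--Poisson algebras, canonically, since $e$ is the largest idempotent. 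Combined with Xu's description, this reduces all of $\mathrm{NP}_{11}$--$\mathrm{NP}_{39}$ to pairs $(\partial,c)$ on $B$ modulo $\mathrm{Aut}(B)$, times a Novikov (or $2$-dimensional Novikov--Poisson) structure on $N$. For example, for $\mathbb C[x,y]/(x,y)^2$ this is $\mathrm{GL}_2$ acting on (matrix, vector) pairs, and the normal forms are exactly $\mathrm{NP}_{11}$--$\mathrm{NP}_{15}$. Genuine brute force is then needed only for the three nilpotent $\cdot$-structures, which is precisely the part your proposal leaves undone.
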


\subsection{\normalsize The geometric classification of $3$-dimensional Novikov--Poisson algebras}

\begin{theoremA}\label{theoA}
The variety of $3$-dimensional Novikov--Poisson algebras has dimension  $12$ and it has  $4$  irreducible components defined by
\begin{center}
        $\mathcal{C}_{1}=\overline{{\mathcal O}( {\rm NP}_{11}^{\alpha,\beta,\gamma})},$  \ 
        $\mathcal{C}_{2}=\overline{{\mathcal O}( {\rm NP}_{20}^{\alpha,\beta,\gamma})},$ \ 
        $\mathcal{C}_{3}=\overline{{\mathcal O}( {\rm NP}_{25}^{\alpha,\beta,\gamma})},$ \ and
        $\mathcal{C}_{4}=\overline{{\mathcal O}( {\rm NP}_{26}^{\alpha,\beta,\gamma})}.$ \ 
 \end{center}
In particular, there are no rigid algebras.
\end{theoremA}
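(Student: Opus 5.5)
The plan follows the scheme of Proposition~\ref{propN}. The variety of $3$-dimensional Novikov--Poisson algebras consists of the algebras listed in the classification theorem of \cite{ZHSC} (nonzero $\cdot$), together with the algebras with $\cdot=0$. The latter are $3$-dimensional Novikov algebras with zero commutative product, and each of them is a degeneration (via $e_i\mapsto te_i$ applied only to the $\cdot$-part, i.e.\ scaling the associative product to zero) of some algebra with nonzero $\cdot$. For example, ${\rm NP}_{25}^{\alpha,\beta,\gamma}$ with parametrized basis $(t^{-1}\ldots)$ adjusted so that $\circ$ stays fixed while $\cdot\to 0$ needs care. Instead, I would handle them through Proposition~\ref{propN}: the four Novikov components $\overline{\mathcal O({\rm N}_{01})},\ \overline{\mathcal O({\rm N}_{07}^\alpha)},\ \overline{\mathcal O({\rm N}_{10}^\alpha)},\ \overline{\mathcal O({\rm N}_{11}^{\alpha,\beta})}$, each equipped with $\cdot=0$, must be shown to lie in $\mathcal C_1\cup\dots\cup\mathcal C_4$.

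\textbf{Step 1: dimensions.} I first compute the derivation algebras of the generic members of the four families. A $3$-parameter family whose generic member has a derivation algebra of dimension $d$ gives a component of dimension $9-d+3$. The claim $\dim=12$ forces $d=0$ for ${\rm NP}_{11},{\rm NP}_{20},{\rm NP}_{25},{\rm NP}_{26}$ (for ${\rm NP}_{25}$, the only derivation of $\mathbb C^3$ with idempotents is $0$), so each $\mathcal C_i$ is $12$-dimensional.

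\textbf{Step 2: degenerations.} For every other algebra ${\rm NP}_{j}$ in the list, and for the zero-$\cdot$ algebras ${\rm N}_{01},{\rm N}_{07}^\alpha,{\rm N}_{10}^\alpha,{\rm N}_{11}^{\alpha,\beta}$ with $\cdot=0$, I would write an explicit parametrized basis and parametrized index showing that it lies in one of the $\mathcal C_i$. The organisation follows the type of the underlying commutative associative algebra:
\begin{itemize}
\item[(a)] Algebras whose $\cdot$ is $\mathbb C e_1\oplus\mathbb C e_2\oplus\mathbb C e_3$-like, or a degeneration of it (${\rm NP}_{27}$--${\rm NP}_{39}$), come from ${\rm NP}_{25}$ or ${\rm NP}_{26}$.
\item[(b)] Those with unital two-dimensional nilpotent parts (${\rm NP}_{12}$--${\rm NP}_{24}$) come from ${\rm NP}_{11}$ or ${\rm NP}_{20}$.
\item[(c)] The nilpotent-$\cdot$ algebras ${\rm NP}_{01}$--${\rm NP}_{10}$ and ${\rm NP}_{40}$--${\rm NP}_{73}$ come from whichever family matches their $\circ$-part.
\end{itemize}
Where a single explicit degeneration is awkward, I would use Remark~\ref{rem-countable-orbit-closures}: it suffices to place all but countably many members of a family inside some $\mathcal C_i$. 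The typical constructions are the scalings $E_i=t^{k_i}e_i$ combined with a reparametrization such as $\alpha\mapsto \alpha/t$, mimicking the table in Proposition~\ref{propN}, and, for the $\cdot=0$ Novikov algebras, bases like $E_i=t e_i$ with parameters rescaled so that $\circ$ survives.

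\textbf{Step 3: no inclusions among the $\mathcal C_i$.} Since all four have dimension $12$ and are irreducible and distinct, none contains another. Hence these are exactly the irreducible components. Rigidity fails because every component is a closure of a $3$-parameter family with orbits of dimension $9$.

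The main obstacle is Step~2. It requires finding explicit degenerations for roughly seventy families, several with three parameters, and showing that every parameter value (except countably many) is reached. The hardest are the nilpotent-$\cdot$ families with rich $\circ$-parts, such as ${\rm NP}_{01}^{\alpha,\beta}$, ${\rm NP}_{04}^{\alpha,\beta,\gamma}$ and ${\rm NP}_{06}^{\alpha,\beta}$, since the target component is not obvious. For these I would first check which Novikov component (via Remark~\ref{redbil}) the $\circ$-part degenerates from, and then try to lift that Novikov degeneration while sending the $\cdot$-part to the nilpotent product.
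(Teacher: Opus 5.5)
Your frame matches the paper's proof, and the parts you actually argue are correct. The four families have trivial derivation algebras. For ${\rm NP}_{11}$ and ${\rm NP}_{20}$, $d(e_3)=0$ since $e_3$ is the unit of $\cdot$, and then $e_3\circ e_3=e_1+\alpha e_3$ (resp.\ $e_1+\beta e_3$) forces $d(e_1)=0$, hence $d=0$. For ${\rm NP}_{26}$, $e_1\circ e_1=\alpha e_1+e_3$ kills the only $\cdot$-derivation $e_3\mapsto e_3$. So, once you check the parametrizations are generically finite-to-one, each $\mathcal C_i$ is $12$-dimensional. Equal dimensions then exclude inclusions, and orbits of dimension at most $9<12$ exclude rigid algebras. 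Your reduction of the $\cdot=0$ case to ${\rm N}_{01},{\rm N}_{07}^{\alpha},{\rm N}_{10}^{\alpha},{\rm N}_{11}^{\alpha,\beta}$ via Proposition~\ref{propN} is legitimate, and slightly more economical than the paper, which degenerates onto all of ${\rm N}_{01}$--${\rm N}_{11}^{\alpha,\beta}$. But the proof of Theorem~A \emph{is} the table of explicit degenerations. You give none, so nothing in the proposal shows that $\mathcal C_1\cup\dots\cup\mathcal C_4$ exhausts the variety.

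The guidance you give for Step~2 also fails in two identifiable places.

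First, ${\rm NP}_{16}$--${\rm NP}_{19}$ cannot come from ${\rm NP}_{11}$ or ${\rm NP}_{20}$. Their $\cdot$-part is not unital; it is isomorphic to $\mathbb{C}[x]/(x^2)\oplus\mathbb{C}_0$, since $e_1-e_2$ annihilates everything. Its orbit in the variety of commutative associative algebras is $7$-dimensional. By contrast, $\mathbb{C}[x,y]/(x,y)^2$ (the $\cdot$-part of ${\rm NP}_{11}$) has a $5$-dimensional orbit, and $\mathbb{C}[x]/(x^3)$ (that of ${\rm NP}_{20}$) has a different, non-isomorphic $7$-dimensional one. Since $\cdot$ is constant along these families, projecting to the $\cdot$-component as in Remark~\ref{redbil} shows that $\mathcal C_1$ and $\mathcal C_2$ miss them. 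The paper takes them from ${\rm NP}_{26}$.

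Second, the typical constructions you name cannot reach ${\rm NP}_{01}$--${\rm NP}_{10}$. Scalings $E_i=t^{k_i}e_i$, even composed with permutations and reparametrizations, only rescale the nonzero structure constants of $\cdot$. None of the four sources has a product $e_i\cdot e_j=\lambda e_k$ with $i,j,k$ distinct, which $e_1\cdot e_3=e_2$ and $e_1\cdot e_2=e_3$ require. Lifting Novikov degenerations does not address this either.

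The missing idea is to use mixed bases dictated by the associative structure. Write the $\cdot$-part of ${\rm NP}_{20}$ as $\mathbb{C}[x]/(x^3)$ with $e_1=x$, $e_2=x^2$, $e_3=1$, and take the powers $E_k=(x+t)^k$:
$E_1=e_1+te_3$, $E_2=2te_1+e_2+t^2e_3$, $E_3=3t^2e_1+3te_2+t^3e_3$.
These give $E_1E_1=E_2$, $E_1E_2=E_3$, and $(x+t)^4=3tE_3-3t^2E_2+t^3E_1\to 0$. One then chooses the parameters of ${\rm NP}_{20}$, often of order $t^{-1},t^{-2},t^{-3}$, so that $\circ$ converges to the target. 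The paper obtains all of ${\rm NP}_{01}$--${\rm NP}_{10}$ from ${\rm NP}_{20}$ with bases of this kind (rescaled, or mixing $1,x,x^2$ in other ways). It obtains most of ${\rm NP}_{40}$--${\rm NP}_{54}$ from ${\rm NP}_{11}$ with parameters such as $\frac1t,\frac{\beta}{t^2}$ and similar mixed bases. ``Whichever family matches the $\circ$-part'' is therefore not a workable criterion: the $\cdot$-part constrains the possible source first.
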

\begin{proof}
The variety of  $3$-dimensional associative commutative algebras has only one irreducible component $\rm{NP}_{25}^{0,0,0}$  \cite{MS}, 
hence algebras with trivial Novikov multiplication cannot give irreducible components and we will omit them below.

The dimensions of the closures of the families defining the components are as follows
\begin{longtable}{lllllllllllllllllllllllll}

$\dim \mathcal{C}_1$ & $\!\!=\!\!$ & $\dim \mathcal{C}_2$ & $\!\!=\!\!$ & $\dim \mathcal{C}_3$ & $\!\!=\!\!$ & $\dim \mathcal{C}_4$ & $\!\!=\!\!$ & $12.$ \\
\end{longtable}

 The following list of degenerations proves the theorem. 


\begingroup
\setlength{\tabcolsep}{2pt}

\begin{longtable}{lcl}
$\mathrm{NP}_{25}^{\frac{1}{t}, \frac{1}{t}, \frac{1}{t}}$
  & $\xrightarrow{\big(te_1,\ te_2,\ te_3\big)}$
  & $\mathrm{N}_{01}$ \\[0.5ex]
\hline
$\mathrm{NP}_{20}^{\frac{1}{t^2}, \frac{1}{t^2}, \frac{1}{t^2}+\frac{1}{t^3}}$
  & $\xrightarrow{\big(t^2(e_3-e_1+2e_2),\ te_1,\ e_2\big)}$
  & $\mathrm{N}_{02}$ \\[0.5ex]
\hline
$\mathrm{NP}_{26}^{-\frac{1}{t},0, \frac{1}{t}}$
  & $\xrightarrow{\big(te_1,\ t^2e_3,\ te_2\big)}$
  & $\mathrm{N}_{03}$ \\[0.5ex]
\hline
$\mathrm{NP}_{11}^{-\frac{1}{t},0, 0}$
  & $\xrightarrow{\big(te_3,\ t^3e_2,\ t^2e_1\big)}$
  & $\mathrm{N}_{04}$ \\[0.5ex]
\hline
$\mathrm{NP}_{20}^{\frac{1}{2t},-\frac{1}{2t},1}$
  & $\xrightarrow{\big(te_3,\ t^3e_2,\ t^2e_1\big)}$
  & $\mathrm{N}_{05}$ \\[0.5ex]
\hline
$\mathrm{NP}_{20}^{0,-\frac{1}{t},-\frac{1}{4t^2}}$
  & $\xrightarrow{\big(te_3,\ -\frac{t}{2}e_2,\ te_1+\frac{1}{2}e_2\big)}$
  & $\mathrm{N}_{06}$ \\[0.5ex]
\hline
$\mathrm{NP}_{26}^{\frac{\alpha}{t},\frac{\alpha+1}{t},\frac{1}{t}}$
  & $\xrightarrow{\big(te_1,\ e_3,\ te_2\big)}$
  & $\mathrm{N}_{07}^\alpha$ \\[0.5ex]
\hline
$\mathrm{NP}_{11}^{\frac{\alpha}{t},-\frac{(\alpha+1)^2}{t^2},\frac{2(\alpha+1)}{t}}$
  & $\xrightarrow{\big(te_3,\ -(\alpha+1)e_1+te_2,\ e_1\big)}$
  & $\mathrm{N}_{08}^\alpha$ \\[0.5ex]
\hline
$\mathrm{NP}_{11}^{-\frac{1}{t},0,\frac{\alpha-1}{t}}$
  & $\xrightarrow{\big(te_3,\ t^2e_1,\ te_2\big)}$
  & $\mathrm{N}_{09}^\alpha$ \\[0.5ex]
\hline
$\mathrm{NP}_{20}^{\frac{\alpha+1}{t},\frac{\alpha}{t},0}$
  & $\xrightarrow{\big(te_3,\ te_2,\ \frac{t}{\sqrt{\alpha+\frac{1}{2}}}e_1\big)}$
  & $\mathrm{N}_{10}^\alpha$ \\[0.5ex]
\hline
$\mathrm{NP}_{11}^{\frac{\beta}{t},-\frac{(\beta+1)(\alpha+\beta+t)}{t^2},\frac{\alpha+2\beta+t+1}{t}}$
  & $\xrightarrow{\big(te_3,-\frac{\alpha+\beta+t}{t}e_1+e_2,\ -\frac{\beta+1}{t}e_1+e_2\big)}$
  & $\mathrm{N}_{11}^{\alpha,\beta}$ \\[0.5ex]
\hline

\multicolumn{2}{l}{${\rm NP}_{20}^{\frac{\alpha t - \beta t - 2}{2 t},\ \frac{\alpha t - \beta t + 2}{2 t},\ -\frac{(\alpha t - \beta t - 2)(\alpha t + \beta t + 2)}{t^2(\alpha+\beta)^2}} \quad\quad\quad \quad \quad\xrightarrow{ \quad \quad\quad\quad\quad\quad\quad\quad  } $}&$ {\rm NP}_{01}^{\alpha,\beta}$\\
\multicolumn{3}{l}{$E_1^t= \frac{2 t}{\alpha + \beta}e_1 - \frac{4(\alpha t - \beta t - 2)}{(\alpha+\beta)^3}e_2 + t e_3$ \quad
$E_2^t= -\frac{8 t^2}{(\alpha+\beta)^2}e_2$ \quad
$E_3^t= -\frac{2 t}{\alpha + \beta}e_1 - \frac{4(\alpha t + 3\beta t + 2)}{(\alpha+\beta)^3}e_2 + t e_3$} \\[0.5ex]
\hline

\multicolumn{2}{l}{${\rm NP}_{20}^{-\frac{2\alpha t^3 - t + 1}{t^3},\ \frac{t+1}{t^3},\ \frac{(2\alpha t^3 - t + 1)(3\alpha t^3 - t + 1)}{t^2 (2\alpha t^2 - 1)^2}}
\quad \quad\quad\quad\quad\xrightarrow{ \quad\quad\quad\quad\quad\quad\quad\quad\quad   }$}&$  {\rm NP}_{02}^{\alpha}$\\

\multicolumn{3}{l}{$E_1^t= \frac{t^5}{2\alpha t^2 - 1}e_1 + \frac{t^6(2\alpha t^3 - t + 1)}{(2\alpha t^2 - 1)^3}e_2 + t^3 e_3$ \quad
$E_2^t= \frac{3 t^{10}}{2\alpha t^2 - 1}e_1 + \frac{3 t^{11}(4\alpha t^3 - 2 t + 1)}{(2\alpha t^2 - 1)^3}e_2 + t^8 e_3$} \\

\multicolumn{3}{l}{$E_3^t= \frac{2 t^7}{2\alpha t^2 - 1}e_1 + \frac{t^8(6\alpha t^3 - 3 t + 2)}{(2\alpha t^2 - 1)^3}e_2 + t^5 e_3$} \\[0.5ex]
\hline

${\rm NP}_{20}^{- \frac{2 \alpha t^{3} + 1}{t^{3}},\frac{1}{t^{3}},\frac{\left(2 \alpha t^{3} + 1\right) \left(3 \alpha t^{3} + 1\right)}{4 \alpha^{2} t^{6}}}  $&$\xrightarrow{ \quad\quad\quad\quad\quad\quad\quad\quad  }$&$  {\rm NP}_{03}^{\alpha}$\\

\multicolumn{3}{l}{$E_1^t= \frac{t^{3}}{2 \alpha}e_1 + \frac{2 \alpha t^{3} + 1}{8 \alpha^{3}}e_2 + t^{3}e_3$ \quad
$E_2^t= \frac{3 t^{8}}{2 \alpha}e_1 + \frac{3 t^{5} \left(4 \alpha t^{3} + 1\right)}{8 \alpha^{3}}e_2 + t^{8}e_3$ }\\[0.5ex]
\hline

\multicolumn{2}{l}{
${\rm NP}_{20}^{\frac{2\alpha t^2 - 2\beta t^2 + \gamma}{t^2},\ \frac{\gamma}{t^2},\ \frac{2\alpha^2 t^4 - 7\alpha\beta t^4 + 3\alpha\gamma t^2 - 2\alpha t^6 + 6\beta^2 t^4 - 5\beta\gamma t^2 + 4\beta t^6 + \gamma^2 - 2\gamma t^4 + t^8}{(-\alpha t^2 + 2\beta t^2 - \gamma + t^4)^2}}
\ \xrightarrow{ \quad\quad\quad\quad\quad}$}&$ {\rm NP}_{04}^{\alpha,\beta,\gamma}$\\

\multicolumn{3}{l}{$E_1^t= \frac{t^5}{-\alpha t^2 + 2\beta t^2 - \gamma + t^4}e_1 - \frac{t^7(\alpha t^2 - 2\beta t^2 + \gamma)}{(-\alpha t^2 + 2\beta t^2 - \gamma + t^4)^3}e_2 + t^3 e_3$ }\\
\multicolumn{3}{l}{$E_2^t= \frac{3 t^{10}}{-\alpha t^2 + 2\beta t^2 - \gamma + t^4}e_1 + \frac{3 t^{12}(-2\alpha t^2 + 4\beta t^2 - 2\gamma + t^4)}{(-\alpha t^2 + 2\beta t^2 - \gamma + t^4)^3}e_2 + t^8 e_3$} \\
\multicolumn{3}{l}{$E_3^t= \frac{2 t^7}{-\alpha t^2 + 2\beta t^2 - \gamma + t^4}e_1 + \frac{t^9(-3\alpha t^2 + 6\beta t^2 - 3\gamma + t^4)}{(-\alpha t^2 + 2\beta t^2 - \gamma + t^4)^3}e_2 + t^5 e_3$} \\[0.5ex]
\hline

$\mathrm{NP}_{20}^{\alpha, -\alpha + 2\beta, 0}$
  & $\xrightarrow{\big(t^2 e_1 + t^3 e_3,\ 3 t^7 e_1 + 3 t^6 e_2 + t^8 e_3,\ 2 t^4 e_1 + t^3 e_2 + t^5 e_3\big)}$
  & $\mathrm{NP}_{05}^{\alpha,\beta}$ \\[0.5ex]
\hline

${\rm NP}_{20}^{\frac{-\alpha + 2\beta t + 2}{t},\ \frac{\alpha}{t},\ \frac{(-\alpha + \beta t + 2)(-\alpha + 2\beta t + 2)}{(\beta t + 1)^2}}
$&$\xrightarrow{ \quad\quad\quad\quad\quad\quad\quad   } $&$ {\rm NP}_{06}^{\alpha,\beta}$\\
\multicolumn{3}{l}{$E_1^t= -\frac{t^2}{\beta t + 1}e_1 + \frac{t^3(-\alpha + \beta t + 2)}{(\beta t + 1)^3}e_2 + t e_3$ \quad
$E_2^t= -\frac{2 t^3}{\beta t + 1}e_1 + \frac{t^4(-2\alpha + 3\beta t + 5)}{(\beta t + 1)^3}e_2 + t^2 e_3$} \\
\multicolumn{3}{l}{$E_3^t= -\frac{3 t^4}{\beta t + 1}e_1 + \frac{3 t^5(-\alpha + 2\beta t + 3)}{(\beta t + 1)^3}e_2 + t^3 e_3$} \\[0.5ex]
\hline

${\rm NP}_{20}^{2\beta,\ \frac{2}{t},\ \frac{t^2(\alpha^2 t^2 - 2\alpha\beta t - \alpha + 2\beta^2)}{(\alpha t^2 - \beta t - 1)^2}}$&$  \xrightarrow{ \quad\quad\quad\quad\quad\quad\quad  }  $&${\rm NP}_{07}^{\alpha,\beta}$\\
\multicolumn{3}{l}{$E_1^t= \frac{t^2}{\alpha t^2 - \beta t - 1}e_1 - \frac{\beta t^4}{(\alpha t^2 - \beta t - 1)^3}e_2 + t e_3$ \quad
$E_2^t= \frac{2 t^3}{\alpha t^2 - \beta t - 1}e_1 + \frac{t^4(\alpha t^2 - 3\beta t - 1)}{(\alpha t^2 - \beta t - 1)^3}e_2 + t^2 e_3$} \\
\multicolumn{3}{l}{$E_3^t= \frac{3 t^4}{\alpha t^2 - \beta t - 1}e_1 + \frac{3 t^5(\alpha t^2 - 2\beta t - 1)}{(\alpha t^2 - \beta t - 1)^3}e_2 + t^3 e_3$ }\\[0.5ex]
\hline

\multicolumn{2}{l}{
$\mathrm{NP}_{20}^{-\frac{2\alpha t + 1}{t},\ \frac{1}{t},\ \frac{(2\alpha t + 1)(3\alpha t + 1)}{4\alpha^2 t^2}}\ \xrightarrow{\big(\frac{t}{2\alpha}e_1 + \frac{2\alpha t + 1}{8\alpha^3}e_2 + t e_3,\ \frac{t^2}{\alpha}e_1 + \frac{t(3\alpha t + 1)}{4\alpha^3}e_2 + t^2 e_3,\ \frac{3 t^3}{2\alpha}e_1 + \frac{3 t^2(4\alpha t + 1)}{8\alpha^3}e_2 + t^3 e_3\big)}$}
  & $\mathrm{NP}_{08}^{\alpha}$ \\[0.5ex]
\hline
$\mathrm{NP}_{20}^{\beta, 2\alpha - \beta, 0}$
  & $\xrightarrow{\big(e_1 + t e_3,\ 2 t e_1 + e_2 + t^2 e_3,\ 3 t^2 e_1 + 3 t e_2 + t^3 e_3\big)}$
  & $\mathrm{NP}_{09}^{\alpha,\beta}$ \\[0.5ex]
\hline
$\mathrm{NP}_{20}^{\alpha, \alpha, 1}$
  & $\xrightarrow{\big(e_1 + t e_3,\ 2 t e_1 + e_2 + t^2 e_3,\ 3 t^2 e_1 + 3 t e_2 + t^3 e_3\big)}$
  & $\mathrm{NP}_{10}^{\alpha}$ \\[0.5ex]
\hline
$\mathrm{NP}_{11}^{\alpha,\ -\beta\gamma + t,\ \beta + \gamma}$
  & $\xrightarrow{\big(e_1,\ -\frac{\beta}{t}e_1 + \frac{1}{t}e_2,\ e_3\big)}$
  & $\mathrm{NP}_{12}^{\alpha,\beta,\gamma}$ \\[0.5ex]
\hline
$\mathrm{NP}_{11}^{\alpha,\ -\beta^2,\ 2\beta}$
  & $\xrightarrow{\big(e_1,\ -\frac{\beta}{t}e_1 + \frac{1}{t}e_2,\ e_3\big)}$
  & $\mathrm{NP}_{13}^{\alpha,\beta}$ \\[0.5ex]
\hline
$\mathrm{NP}_{11}^{\alpha,\ \beta,\ \gamma}$
  & $\xrightarrow{\big(\frac{\beta}{t}e_1,\ \frac{1}{t}e_2,\ e_3\big)}$
  & $\mathrm{NP}_{14}^{\alpha,\beta,\gamma}$ \\[0.5ex]
\hline
$\mathrm{NP}_{11}^{\alpha,\ -\beta^2,\ 2\beta}$
  & $\xrightarrow{\big(\frac{1}{t}e_1,\ -\frac{\beta}{t^2}e_1 + \frac{1}{t^2}e_2,\ e_3\big)}$
  & $\mathrm{NP}_{15}^{\alpha,\beta}$ \\[0.5ex]
\hline
$\mathrm{NP}_{26}^{\alpha,\ \beta,\ -\frac{1}{t}}$
  & $\xrightarrow{\big(t e_2 + e_3,\ e_3,\ e_1\big)}$
  & $\mathrm{NP}_{16}^{\alpha,\beta}$ \\[0.5ex]
\hline
$\mathrm{NP}_{26}^{\alpha,\ \beta,\ 0}$
  & $\xrightarrow{\big(t e_2 + e_3,\ e_3,\ e_1\big)}$
  & $\mathrm{NP}_{17}^{\alpha,\beta}$ \\[0.5ex]
\hline
$\mathrm{NP}_{26}^{\alpha,\ \beta,\ -\frac{1}{t}}$
  & $\xrightarrow{\big(t e_2 + \frac{1}{t}e_3,\ \frac{1}{t}e_3,\ e_1\big)}$
  & $\mathrm{NP}_{18}^{\alpha,\beta}$ \\[0.5ex]
\hline
$\mathrm{NP}_{26}^{\alpha,\ \beta,\ 0}$
  & $\xrightarrow{\big(t e_2 + \frac{1}{t}e_3,\ \frac{1}{t}e_3,\ e_1\big)}$
  & $\mathrm{NP}_{19}^{\alpha,\beta}$ \\[0.5ex]
\hline
$\mathrm{NP}_{20}^{\alpha,\ \beta,\ \frac{\alpha - \beta}{2 t^2}}$
  & $\xrightarrow{\big(\frac{1}{t}e_1 - \frac{1}{t^3}e_2,\ \frac{1}{t^2}e_2,\ e_3\big)}$
  & $\mathrm{NP}_{21}^{\alpha,\beta}$ \\[0.5ex]
\hline
$\mathrm{NP}_{20}^{\alpha,\ \alpha,\ \frac{\beta}{t}}$
  & $\xrightarrow{\big(\frac{1}{t}e_1 - \frac{1}{t^3}e_2,\ \frac{1}{t^2}e_2,\ e_3\big)}$
  & $\mathrm{NP}_{22}^{\alpha,\beta}$ \\[0.5ex]
\hline
$\mathrm{NP}_{20}^{\beta,\ \alpha,\ 0}$
  & $\xrightarrow{\big(\frac{1}{t}e_1,\ \frac{1}{t^2}e_2,\ e_3\big)}$
  & $\mathrm{NP}_{23}^{\alpha,\beta}$ \\[0.5ex]
\hline
$\mathrm{NP}_{20}^{\alpha,\ \alpha,\ \frac{1}{t}}$
  & $\xrightarrow{\big(\frac{1}{t}e_1,\ \frac{1}{t^2}e_2,\ e_3\big)}$
  & $\mathrm{NP}_{24}^{\alpha}$ \\[0.5ex]
\hline
$\mathrm{NP}_{26}^{\alpha,\beta,\gamma}$
  & $\xrightarrow{\big(e_1,\ e_2,\ \frac{1}{t} e_3\big)}$
  & $\mathrm{NP}_{27}^{\alpha,\beta,\gamma}$ \\[0.5ex]
\hline
$\mathrm{NP}_{25}^{\alpha, \beta, \frac{1}{t}}$
  & $\xrightarrow{\big(e_1,\ e_2,\ te_3\big)}$
  & $\mathrm{NP}_{28}^{\alpha,\beta}$ \\[0.5ex]
\hline
$\mathrm{NP}_{25}^{\alpha, \beta, 0}$
  & $\xrightarrow{\big(e_1,\ e_2,\ te_3\big)}$
  & $\mathrm{NP}_{29}^{\alpha,\beta}$ \\[0.5ex]
\hline
$\mathrm{NP}_{26}^{\frac{\beta}{t},\ \frac{1}{t},\ \alpha}$
  & $\xrightarrow{\big(e_2,\ t e_1 - t^2 e_3,\ t^2 e_1 - 2 t^3 e_3\big)}$
  & $\mathrm{NP}_{30}^{\alpha,\beta}$ \\[0.5ex]
\hline
$\mathrm{NP}_{26}^{-\frac{1}{2t},\ \beta - 2,\ \alpha}$
  & $\xrightarrow{\big(e_2,\ -2 t e_1 + t e_2 - t e_3,\ 2 t^2 e_1 - t^2 e_2 + 3 t^2 e_3\big)}$
  & $\mathrm{NP}_{31}^{\alpha,\beta}$ \\[0.5ex]
\hline
$\mathrm{NP}_{25}^{\alpha, 2\beta, 0}$
  & $\xrightarrow{\big(e_1,\ te_2 - te_3,\ t^2e_2 + t^2e_3\big)}$
  & $\mathrm{NP}_{32}^{\alpha,\beta}$ \\[0.5ex]
\hline
$\mathrm{NP}_{26}^{-\frac{\beta}{3t},\ -\frac{1}{3t},\ \alpha}$
  & $\xrightarrow{\big(e_2,\ -3 t e_1 - 4 t e_2,\ 2 t e_2 + 5 t e_3\big)}$
  & $\mathrm{NP}_{33}^{\alpha,\beta}$ \\[0.5ex]
\hline
$\mathrm{NP}_{26}^{\frac{1}{t^2},\ \frac{2}{t},\ \alpha}$
  & $\xrightarrow{\big(e_2,\ t^2 e_1 - 2 t^2 e_2 - 4 t^2 e_3,\ -2 t^3 e_1 - 4 t^3 e_2\big)}$
  & $\mathrm{NP}_{34}^{\alpha}$ \\[0.5ex]
\hline
$\mathrm{NP}_{26}^{\frac{1}{t^2},\ 0,\ \alpha}$
  & $\xrightarrow{\big(e_2,\ t^2 e_1 - 2 t^2 e_2 - 4 t^2 e_3,\ -2 t^3 e_1 - 4 t^3 e_2\big)}$
  & $\mathrm{NP}_{35}^{\alpha}$ \\[0.5ex]
\hline
$\mathrm{NP}_{25}^{\alpha, 1, -1}$
  & $\xrightarrow{\big(e_1,\ t e_2 + t e_3,\ t^2 e_2 - t^2 e_3\big)}$
  & $\mathrm{NP}_{36}^{\alpha}$ \\[0.5ex]
\hline
$\mathrm{NP}_{25}^{\alpha, 0, 0}$
  & $\xrightarrow{\big(e_1,\ t e_2,\ t e_3\big)}$
  & $\mathrm{NP}_{37}^{\alpha}$ \\[0.5ex]
\hline
$\mathrm{NP}_{25}^{\alpha, \frac{1}{t}, \frac{1}{t}}$
  & $\xrightarrow{\big(e_1,\ t e_2,\ t e_3\big)}$
  & $\mathrm{NP}_{38}^{\alpha}$ \\[0.5ex]
\hline
$\mathrm{NP}_{25}^{\alpha, \frac{1}{t}, 0}$
  & $\xrightarrow{\big(e_1,\ t e_2,\ t e_3\big)}$
  & $\mathrm{NP}_{39}^{\alpha}$ \\[0.5ex]
\hline

${\rm NP}_{11}^{\frac{1}{t},\ -\frac{\alpha\beta}{t^2},\ \frac{\alpha+\beta}{t}}  $&$\xrightarrow{ \quad\quad\quad\quad\quad\quad\quad  } $&$ {\rm NP}_{40}^{\alpha,\beta}$\\
\multicolumn{3}{l}{$E_1^t= -\frac{t^2(\alpha+2\beta-2)}{2\alpha(\beta-1)}e_1 + \frac{t^3}{2\alpha(\beta-1)}e_2 + t e_3$ \quad
$E_2^t= -\frac{t^3(\alpha+2\beta-2)}{\alpha(\beta-1)}e_1 + \frac{t^4}{\alpha(\beta-1)}e_2 + t^2 e_3$} \\
\multicolumn{3}{l}{$E_3^t= \frac{1}{2 t^8(\beta-1)}e_1 - \frac{1}{2\alpha t^7(\beta-1)}e_2$ }\\[0.5ex]
\hline
${\rm NP}_{11}^{\frac{1}{t},\ -\frac{\alpha^2 + t^{13}}{t^2},\ \frac{2\alpha + t^{13}}{t}} $&$ \xrightarrow{  \quad\quad\quad\quad\quad\quad\quad   } $&$ {\rm NP}_{41}^{\alpha}$\\
\multicolumn{3}{l}{$E_1^t= -\frac{t^2}{\alpha}e_1 + t e_3$ \quad
$E_2^t= -\frac{2 t^3}{\alpha}e_1 + t^2 e_3$ \quad
$E_3^t= \frac{\alpha + t^{13}}{\alpha t^{10}(\alpha-1)}e_1 - \frac{1}{\alpha t^9(\alpha-1)}e_2$} \\[0.5ex]
\hline

$\mathrm{NP}_{11}^{\frac{1}{t},\ 0,\ \frac{\beta}{t}}$
  & $\xrightarrow{\big(\frac{t}{\alpha}e_1 + t e_3,\ \frac{2 t^2}{\alpha}e_1 + t^2 e_3,\ \frac{1}{\alpha t^{17}(\beta-1)}e_2\big)}$
  & $\mathrm{NP}_{42}^{\alpha,\beta}$ \\[0.5ex]
\hline
\multicolumn{2}{l}{
$\mathrm{NP}_{11}^{\frac{1}{t},\ \frac{\beta}{t^2},\ \frac{\alpha}{t}} \quad \quad  \quad \quad \quad \quad  \xrightarrow{\big(\frac{\alpha t^2}{\beta}e_1 - \frac{t^3}{\beta}e_2 + t e_3,\ \frac{2\alpha t^3}{\beta}e_1 - \frac{2 t^4}{\beta}e_2 + t^2 e_3,\ -\frac{t^3(\alpha^2+\beta)}{\beta}e_1 + \frac{\alpha t^4}{\beta}e_2\big)}$}
  & $\mathrm{NP}_{43}^{\alpha,\beta}$ \\[0.5ex]
\hline
\multicolumn{2}{l}{
$\mathrm{NP}_{11}^{\frac{1}{t},\ 0,\ \frac{\beta}{t}}
\quad \quad\quad \quad \quad \  \quad \xrightarrow{\big(\frac{t}{\alpha}e_1 + \frac{t^2}{\alpha(\alpha t - \beta)}e_2 + t e_3,\ \frac{2 t^2}{\alpha}e_1 + \frac{2 t^3}{\alpha(\alpha t - \beta)}e_2 + t^2 e_3,\ -\frac{\beta t^2}{\alpha}e_1 + \frac{t^3}{\alpha}e_2\big)}$}
  & $\mathrm{NP}_{44}^{\alpha,\beta}$ \\[0.5ex]
\hline
$\mathrm{NP}_{11}^{\frac{1}{t},\ 0,\ \frac{\alpha}{t}}$
  & $\xrightarrow{\big(-\frac{t^3}{\alpha^2}e_2 + t e_3,\ -\frac{2 t^4}{\alpha^2}e_2 + t^2 e_3,\ t^2 e_1 - \frac{t^3}{\alpha}e_2\big)}$
  & $\mathrm{NP}_{45}^{\alpha}$ \\[0.5ex]
\hline
$\mathrm{NP}_{11}^{\frac{1}{t},\ \frac{\alpha}{t},\ 0}$
  & $\xrightarrow{\big(e_1 + t e_3,\ 2 t e_1 + t^2 e_3,\ t e_2\big)}$
  & $\mathrm{NP}_{46}^{\alpha}$ \\[0.5ex]
\hline
$\mathrm{NP}_{11}^{0,\ 0,\ \frac{1}{t}}$
  & $\xrightarrow{\big(\frac{t}{\alpha}e_1 + \frac{t^2}{\alpha(\alpha t - 1)}e_2 + t e_3,\ \frac{2 t^2}{\alpha}e_1 + \frac{2 t^3}{\alpha(\alpha t - 1)}e_2 + t^2 e_3,\ \frac{t^4}{\alpha t - 1}e_2\big)}$
  & $\mathrm{NP}_{47}^{\alpha}$ \\[0.5ex]
\hline
$\mathrm{NP}_{11}^{0,\ -\frac{\alpha}{t^2},\ \frac{\alpha+1}{t}}$
  & $\xrightarrow{\big(-t^2 e_1 + t e_3,\ -2 t^3 e_1 + t^2 e_3,\ \frac{1}{\alpha t^{17}}e_1 - \frac{1}{\alpha t^{16}}e_2\big)}$
  & $\mathrm{NP}_{48}^{\alpha}$ \\[0.5ex]
\hline
$\mathrm{NP}_{11}^{0,\ -\frac{1}{t^2},\ \frac{t^{20}+2}{t}}$
  & $\xrightarrow{\big(-t^2 e_1 + t e_3,\ -2 t^3 e_1 + t^2 e_3,\ \frac{t^{20}+1}{t^{17}}e_1 - \frac{1}{t^{16}}e_2\big)}$
  & $\mathrm{NP}_{49}$ \\[0.5ex]
\hline
$\mathrm{NP}_{26}^{2\alpha,\ -\frac{1}{t},\ 0}$
  & $\xrightarrow{\big(-t e_1 + t e_2 - t e_3,\ 2 t^2 e_1 - 2 t^2 e_3,\ -e_3\big)}$
  & $\mathrm{NP}_{50}^{\alpha}$ \\[0.5ex]
\hline
\multicolumn{2}{l}{
$\mathrm{NP}_{11}^{0,\ \frac{1}{t^2},\ \frac{\alpha}{t}}
\quad \quad \quad \quad \xrightarrow{\big(\alpha t^2 e_1 - t^3 e_2 + t e_3,\ 2\alpha t^3 e_1 - 2 t^4 e_2 + t^2 e_3,\ -t^3(\alpha^2+1)e_1 + \alpha t^4 e_2\big)}$}
  & $\mathrm{NP}_{51}^{\alpha}$ \\[0.5ex]
\hline
$\mathrm{NP}_{11}^{0,\ \frac{1}{t},\ 0}$
  & $\xrightarrow{\big(e_1 + t e_3,\ 2 t e_1 + t^2 e_3,\ t e_2\big)}$
  & $\mathrm{NP}_{52}$ \\[0.5ex]
\hline
$\mathrm{NP}_{11}^{0,\ \frac{t+3}{3},\ 0}$
  & $\xrightarrow{\big(-3 t e_1 - 4 t e_2 + t e_3,\ -2 t^2 e_1 - 3 t^2 e_2,\ 3 e_1 + 3 e_2\big)}$
  & $\mathrm{NP}_{53}$ \\[0.5ex]
\hline
$\mathrm{NP}_{11}^{0,\ \alpha - t,\ \alpha - 1}$
  & $\xrightarrow{\big(e_1 + e_2 + t e_3,\ 2 t e_1 + 2 t e_2 + t^2 e_3,\ t^3 e_2\big)}$
  & $\mathrm{NP}_{54}^{\alpha}$ \\[0.5ex]
\hline
$\mathrm{NP}_{26}^{0,\ -\frac{1}{t^3},\ -\frac{1}{t^3}}$
  & $\xrightarrow{\big(-t^3 e_1 + t^4 e_2 - t^6 e_3,\ -t^6 e_1 - t^6 e_2,\ -t^5 e_2\big)}$
  & $\mathrm{NP}_{55}$ \\[0.5ex]
\hline
$\mathrm{NP}_{25}^{-\frac{1}{2}, 0, 0}$
  & $\xrightarrow{\big(-2 t e_1 - 2 t e_2 - t e_3,\ 4 t^2 e_1 + 4 t^2 e_2 + t^2 e_3,\ -2 t^2 e_1\big)}$
  & $\mathrm{NP}_{56}$ \\[0.5ex]
\hline
$\mathrm{NP}_{25}^{\alpha, \alpha, 0}$
  & $\xrightarrow{\big(t e_1 - t e_2,\ t^2 e_1 + t^2 e_2,\ t e_3\big)}$
  & $\mathrm{NP}_{57}^{\alpha}$ \\[0.5ex]
\hline
$\mathrm{NP}_{20}^{\frac{2\alpha t - 1}{t},\ \frac{1}{t},\ 0}$
  & $\xrightarrow{\big(t e_1,\ t^2 e_2,\ t e_3\big)}$
  & $\mathrm{NP}_{58}^{\alpha}$ \\[0.5ex]
\hline
$\mathrm{NP}_{20}^{\frac{2\alpha t^2 - 1}{t^2},\ \frac{1}{t^2},\ 0}$
  & $\xrightarrow{\big(t e_1,\ t^2 e_2,\ t^2 e_3\big)}$
  & $\mathrm{NP}_{59}^{\alpha}$ \\[0.5ex]
\hline
$\mathrm{NP}_{20}^{-\frac{\alpha-2}{t^2},\ \frac{\alpha}{t^2},\ 2-\alpha}$
  & $\xrightarrow{\big(-t^4 e_1 + t^6 e_2 + t^2 e_3,\ -2 t^6 e_1 + 3 t^8 e_2 + t^4 e_3,\ t^3 e_3\big)}$
  & $\mathrm{NP}_{60}^{\alpha}$ \\[0.5ex]
\hline
$\mathrm{NP}_{20}^{\frac{1}{t^2},\ \frac{1}{t^2},\ \alpha t + 1}$
  & $\xrightarrow{\big(-t^4 e_1 + t^6(\alpha t + 1)e_2 + t^2 e_3,\ -2 t^6 e_1 + t^8(2\alpha t + 3)e_2 + t^4 e_3,\ t^3 e_3\big)}$
  & $\mathrm{NP}_{61}^{\alpha}$ \\[0.5ex]
\hline
$\mathrm{NP}_{20}^{0,\ \frac{2}{t^2},\ \frac{\alpha t^2(\alpha t^2 - 4)}{(\alpha t^2 - 2)^2}}$
  & $\xrightarrow{\big(\frac{2 t^4}{\alpha t^2 - 2}e_1 - \frac{8 t^6}{(\alpha t^2 - 2)^3}e_2 + t^2 e_3,\ \frac{4 t^6}{\alpha t^2 - 2}e_1 + \frac{4 t^8(\alpha t^2 - 6)}{(\alpha t^2 - 2)^3}e_2 + t^4 e_3,\ t^3 e_3\big)}$
  & $\mathrm{NP}_{62}^{\alpha}$ \\[0.5ex]
\hline
$\mathrm{NP}_{20}^{\frac{2t-1}{t^5},\ \frac{1}{t^5},\ \frac{2t-1}{t}}$
  & $\xrightarrow{\big(-t^9 e_1 + t^{13} e_2 + t^5 e_3,\ -2 t^{14} e_1 + 3 t^{18} e_2 + t^{10} e_3,\ t^7 e_3\big)}$
  & $\mathrm{NP}_{63}$ \\[0.5ex]
\hline
$\mathrm{NP}_{20}^{\alpha - \beta + 1,\ \alpha - \beta + 1,\ -\frac{1}{\beta-1}}$
  & $\xrightarrow{\big(\frac{t}{\beta-1}e_1 - \frac{t}{(\beta-1)^3}e_2 + t e_3,\ \frac{2 t^2}{\beta-1}e_1 + \frac{t^2(\beta-3)}{(\beta-1)^3}e_2 + t^2 e_3,\ t e_3\big)}$
  & $\mathrm{NP}_{64}^{\alpha,\beta}$ \\[0.5ex]
\hline
$\mathrm{NP}_{26}^{-\frac{\alpha}{t},\ -\frac{1}{t},\ -\frac{1}{t}}$
  & $\xrightarrow{\big(-t e_1 - t^2 e_3,\ -t^2 e_1 - t^2 e_2,\ -t e_2\big)}$
  & $\mathrm{NP}_{65}^{\alpha}$ \\[0.5ex]
\hline
$\mathrm{NP}_{26}^{-\frac{1}{t},\ \alpha - 1,\ -\frac{1}{t}}$
  & $\xrightarrow{\big(-t e_1 - t e_3,\ -t^2 e_1 - t^2 e_2,\ -t e_2\big)}$
  & $\mathrm{NP}_{66}^{\alpha}$ \\[0.5ex]
\hline
$\mathrm{NP}_{25}^{\alpha, \alpha, \frac{1}{t}}$
  & $\xrightarrow{\big(t e_1 - t e_2,\ t^2 e_1 + t^2 e_2,\ t e_3\big)}$
  & $\mathrm{NP}_{67}^{\alpha}$ \\[0.5ex]
\hline

${\rm NP}_{20}^{\frac{2\alpha t^2 - 1}{\alpha t^{10}},\ \frac{1}{\alpha t^{10}},\ \frac{2\alpha t^2 - 1}{\alpha t^2}} $&$ \xrightarrow{ \quad\quad\quad\quad\quad\quad\quad  }
$&${\rm NP}_{68}^{\alpha}$\\[0.5ex]

\multicolumn{3}{l}{$E_1^t= \frac{\alpha t^{18}(2\alpha t^8 - \alpha t^2 + 1)}{\alpha t^2 - 1}e_1 - \frac{\alpha t^{26}(3\alpha t^8 - \alpha t^2 + 1)}{\alpha t^2 - 1}e_2 - \frac{\alpha t^{10}(\alpha t^8 - \alpha t^5 - \alpha t^2 + t^3 + 1)}{\alpha t^2 - 1}e_3$} \\
\multicolumn{3}{l}{$E_2^t= -2\alpha^2 t^{28}e_1 + 3\alpha^2 t^{36}e_2 + \alpha^2 t^{20}e_3$ \quad
$E_3^t= \frac{2\alpha^2 t^{28}}{\alpha t^2 - 1}e_1 - \frac{3\alpha^2 t^{36}}{\alpha t^2 - 1}e_2 - \frac{\alpha t^{15}(\alpha t^5 - \alpha t^2 + 1)}{\alpha t^2 - 1}e_3$} \\[0.5ex]
\hline

$\mathrm{NP}_{20}^{0,\ 0,\ \alpha}$
  & $\xrightarrow{\big(-t^3 e_1 + t^4(\alpha+1)e_2 + t^2 e_3,\ -2 t^5 e_1 + t^6(2\alpha+3)e_2 + t^4 e_3,\ t^3 e_3\big)}$
  & $\mathrm{NP}_{69}^{\alpha}$ \\[0.5ex]
\hline
$\mathrm{NP}_{26}^{-\frac{\alpha}{t},\ -\frac{1}{t},\ 0}$
  & $\xrightarrow{\big(-t e_2 - t e_3,\ -t^2 e_3,\ -t^2 e_1 - t^2 e_2 - t^2 e_3\big)}$
  & $\mathrm{NP}_{70}^{\alpha}$ \\[0.5ex]
\hline
$\mathrm{NP}_{20}^{2\alpha,\ 0,\ -1}$
  & $\xrightarrow{\big(t e_1,\ t^2 e_2,\ -t e_3\big)}$
  & $\mathrm{NP}_{71}^{\alpha}$ \\[0.5ex]
\hline
$\mathrm{NP}_{26}^{-\frac{1}{t},\ -\frac{\alpha}{t},\ 0}$
  & $\xrightarrow{\big(-t e_1 - \frac{t^2}{\alpha} e_3,\ -t^2 e_1 - t^2 e_2,\ -t e_2\big)}$
  & $\mathrm{NP}_{72}^{\alpha}$ \\[0.5ex]
\hline
$\mathrm{NP}_{26}^{-\frac{1}{t},\ \alpha - 1,\ 0}$
  & $\xrightarrow{\big(-t e_1 - t e_3,\ -t^2 e_1 - t^2 e_2,\ -t e_2\big)}$
  & $\mathrm{NP}_{73}^{\alpha}$ \\[0.5ex]
\hline
\end{longtable}
\endgroup

\end{proof}

\section{\large The geometric classification of Gelfand--Dorfman   algebras}

\begin{definition}[see \cite{KSO19}]
A Gelfand--Dorfman algebra is a triple $({\rm A}, \circ, [\cdot, \cdot])$
equipped with two bilinear operations such that 
$({\rm A}, \circ)$ is a Novikov algebra, 
$({\rm A}, [\cdot, \cdot])$ is a Lie algebra, 
and the following compatibility identity holds 
\begin{center}
$[x, y \circ z] - [z, y \circ x] + [y, x] \circ z -  [y, z] \circ x - y \circ [x, z]\  =\  0.
$\end{center}
\end{definition}
\subsection{\normalsize The algebraic classification of $3$-dimensional Gelfand--Dorfman algebras}

\begin{theorem}[see \cite{ZHSC-GD}]
	Let \((\mathrm{G},\circ,[\cdot,\cdot])\) be a complex $3$-dimensional Gelfand--Dorfman algebra with $[\cdot,\cdot]\neq 0.$  
    Then \((\mathrm{G},\circ,[\cdot,\cdot])\) is isomorphic to one of the following algebras:

	\begingroup\setlength{\tabcolsep}{2pt}
	\begin{longtable}{lcl}

$\mathrm{G}_{01}^{\alpha}$&$:$&$
\begin{cases}
[e_1, e_2] = e_3; \\
e_1 \circ e_2 = e_2 + \alpha e_3, \quad e_1 \circ e_3 = e_3
\end{cases}
$\\

$\mathrm{G}_{02}^{\alpha}$&$:$&$
\begin{cases}
[e_1, e_2] = e_3; \\
e_1 \circ e_1 = e_3, \quad e_1 \circ e_2 = e_1, \quad e_2 \circ e_1 = e_1 + \alpha e_3, \\
e_2 \circ e_2 = e_2, \quad e_2 \circ e_3 = e_3, \quad e_3 \circ e_2 = e_3
\end{cases}
$\\

$\mathrm{G}_{03}^{\alpha,\beta}$&$:$&$
\begin{cases}
[e_1, e_2] = e_3; \\
e_1 \circ e_1 = e_1, \quad e_1 \circ e_2 = \alpha e_2+\beta e_3, \quad e_1 \circ e_3 = \alpha e_3, \quad  e_2 \circ e_1 = e_2, \quad e_3 \circ e_1 = e_3
\end{cases}
$\\

$\mathrm{G}_{04}^{\alpha}$&$:$&$
\begin{cases}
[e_1, e_2] = e_3; \\
e_1 \circ e_1 = e_1 + e_2, \quad e_1 \circ e_2 = \alpha e_3, \quad e_2 \circ e_1 = e_2, \quad e_3 \circ e_1 = e_3
\end{cases}
$\\

$\mathrm{G}_{05}$&$:$&$
\begin{cases}
[e_1, e_2] = e_3; \\
e_1 \circ e_1 = e_1 + e_3, \quad e_2 \circ e_1 = e_2, \quad e_3 \circ e_1 = e_3
\end{cases}
$\\

$\mathrm{G}_{06}^{\alpha}$&$:$&$
\begin{cases}
[e_1, e_2] = e_3; \\
e_1 \circ e_2 = -\alpha e_3, \quad e_2 \circ e_1 = \alpha e_3
\end{cases}
$\\

$\mathrm{G}_{07}$&$:$&$
\begin{cases}
[e_1, e_2] = e_3; \\
e_1 \circ e_1 = e_1, \quad e_1 \circ e_2 = -e_2, \quad e_2 \circ e_1 = e_2
\end{cases}
$\\

$\mathrm{G}_{08}^{\alpha,\beta}$&$:$&$
\begin{cases}
[e_1, e_2] = e_3; \\
e_1 \circ e_2 = \alpha e_3, \quad e_2 \circ e_1 = \beta e_3, \quad e_2 \circ e_2 = e_1
\end{cases}
$\\

$\mathrm{G}_{09}$&$:$&$
\begin{cases}
[e_1, e_2] = e_3; \\
e_2 \circ e_2 = e_2 + e_3, \quad e_3 \circ e_2 = e_3
\end{cases}
$\\

$\mathrm{G}_{10}$&$:$&$
\begin{cases}
[e_1, e_2] = e_3; \\
e_2 \circ e_2 = e_2, \quad e_3 \circ e_2 = e_3
\end{cases}
$\\

$\mathrm{G}_{11}^{\alpha,\beta}$&$:$&$
\begin{cases}
[e_1, e_2] = e_3; \\
e_1 \circ e_1 = \alpha e_3, \quad e_1 \circ e_2 = \beta e_3, \quad e_2 \circ e_2 = e_3
\end{cases}
$\\

$\mathrm{G}_{12}^{\alpha,\beta,\gamma}$&$:$&$
\begin{cases}
[e_1, e_3] = e_1, \quad [e_2, e_3] = e_2; \\
e_1 \circ e_3 = \alpha e_1, \quad e_2 \circ e_3 = \alpha e_2, \quad e_3 \circ e_1 = \beta e_2, \quad 
e_3 \circ e_2 = e_1 + \gamma e_2, \quad e_3 \circ e_3 = \alpha e_3
\end{cases}
$\\

$\mathrm{G}_{13}^{\alpha,\beta}$&$:$&$
\begin{cases}
[e_1, e_3] = e_1, \quad [e_2, e_3] = e_2; \\
e_1 \circ e_3 = \alpha e_1, \quad e_2 \circ e_3 = \alpha e_2, \quad 
e_3 \circ e_2 = e_1 + \beta e_2, \quad e_3 \circ e_3 = e_2 + \alpha e_3
\end{cases}
$\\

$\mathrm{G}_{14}^{\alpha,\beta}$&$:$&$
\begin{cases}
[e_1, e_3] = e_1, \quad [e_2, e_3] = e_2; \\
e_1 \circ e_3 = \alpha e_1, \quad e_2 \circ e_3 = \alpha e_2,\quad e_3 \circ e_1 = \beta e_1, \quad  e_3 \circ e_2 = \beta e_2, \quad e_3 \circ e_3 = \alpha e_3
\end{cases}
$\\

$\mathrm{G}_{15}^{\alpha}$&$:$&$
\begin{cases}
[e_1, e_3] = e_1, \quad [e_2, e_3] = e_2; \\
e_1 \circ e_3 = \alpha e_1, \quad e_2 \circ e_3 = \alpha e_2, \quad e_3 \circ e_3 = e_1 + \alpha e_3
\end{cases}
$\\

$\mathrm{G}_{16}^{\alpha,\beta,\gamma,\delta
}$&$:$&$
\begin{cases}
[e_1,e_3]=e_1,\quad [e_2,e_3]=\alpha e_2;\\
e_1\circ e_3=\beta e_1,\quad e_2\circ e_3=\beta e_2,\quad 
e_3\circ e_1=\gamma e_1,\quad e_3\circ e_2=\delta e_2,\quad e_3\circ e_3=\beta e_3
\end{cases}
$\\

$\mathrm{G}_{17}^{\alpha,\beta,\gamma
}$&$:$&$
\begin{cases}
[e_1,e_3]=e_1,\quad [e_2,e_3]=\alpha e_2; \\
e_1\circ e_3=\beta e_1,\quad e_2\circ e_3=\beta e_2,\quad  
e_3\circ e_1=\gamma e_1,\quad e_3\circ e_3=e_2+\beta e_3
\end{cases}
$\\

$\mathrm{G}_{18}^{\alpha,\beta
}$&$:$&$
\begin{cases}
[e_1,e_3]=e_1,\quad [e_2,e_3]=\alpha e_2; \\
e_1\circ e_3=\beta e_1,\quad e_2\circ e_3=\beta e_2,\quad e_3\circ e_3=e_1+e_2+\beta e_3
\end{cases}
$\\

$\mathrm{G}_{19}^{\alpha,\beta
}$&$:$&$
\begin{cases}
[e_1,e_3]=e_1,\quad [e_2,e_3]=\alpha e_2; \\
e_1\circ e_3=\beta e_1+e_2,\quad e_2\circ e_3=\beta e_2 ,\quad e_3\circ e_1=\frac{1}{\alpha-1}e_2,\\ e_3\circ e_2=-\beta e_2, \quad e_3\circ e_3=\beta e_3
\end{cases}
$\\

$\mathrm{G}_{20}^{\alpha,\beta
}$&$:$&$
\begin{cases}
[e_1,e_3]=e_1,\quad [e_2,e_3]=\alpha e_2; \\
e_1\circ e_3=\beta e_1+e_2,\quad e_2\circ e_3=\beta e_2 ,\quad e_3\circ e_1=\frac{1}{\alpha-1}e_2,\\ e_3\circ e_2=-\beta e_2 ,\quad e_3\circ e_3=e_1+\beta e_3
\end{cases}
$\\

$\mathrm{G}_{21}^{\alpha,\beta
}$&$:$&$
\begin{cases}
[e_1,e_3]=e_1,\quad [e_2,e_3]=2e_2;\\
e_1\circ e_1=e_2,\quad e_1\circ e_3=\alpha e_1,\quad e_2\circ e_3=\alpha e_2,\\  
e_3\circ e_1=\beta e_1,\quad
e_3\circ e_2=(-\alpha+2\beta)e_2,\quad e_3\circ e_3=\alpha e_3
\end{cases}
$\\

$\mathrm{G}_{22}^{\alpha}$&$:$&$
\begin{cases}
[e_1,e_3]=e_1,\quad [e_2,e_3]=\frac12e_2; \\
e_2\circ e_2=e_1,\quad e_1\circ e_3=\alpha e_1,\quad e_2\circ e_3=\alpha e_2 ,\quad  e_3\circ e_2=\frac{\alpha}{2}e_2,\quad e_3\circ e_3=\alpha e_3
\end{cases}
$\\

$\mathrm{G}_{23}^{\alpha}$&$:$&$
\begin{cases}
[e_1,e_3]=e_1,\quad [e_2,e_3]=\frac12e_2; \\
e_2\circ e_2=e_1,\quad e_1\circ e_3=\alpha e_1,\quad e_2\circ e_3=\alpha e_2 ,\quad
e_3\circ e_2=\frac{\alpha}{2}e_2,\quad e_3\circ e_3=e_1+\alpha e_3
\end{cases}
$\\

$\mathrm{G}_{24}^{\alpha,\beta,\gamma}$&$:$&$
\begin{cases}
[e_1,e_3]=e_1,\quad [e_2,e_3]=-e_2;\\
e_1\circ e_3=\alpha e_1,\quad e_2\circ e_3=\alpha e_2, \quad 
e_3\circ e_1=\beta e_1,\quad e_3\circ e_2=\gamma e_2,\quad e_3\circ e_3=\alpha e_3
\end{cases}
$\\

$\mathrm{G}_{25}^{\alpha,\beta}$&$:$&$
\begin{cases}
[e_1,e_3]=e_1,\quad [e_2,e_3]=-e_2; \\
e_1\circ e_3=\alpha e_1,\quad e_2\circ e_3=\alpha e_2 ,\quad e_3\circ e_2=\beta e_2,\quad e_3\circ e_3=e_1+\alpha e_3
\end{cases}
$\\

$\mathrm{G}_{26}^{\alpha}$&$:$&$
\begin{cases}
[e_1,e_3]=e_1,\quad [e_2,e_3]=-e_2;\\
e_1\circ e_3=\alpha e_1,\quad e_2\circ e_3=\alpha e_2,\quad e_3\circ e_3=e_1+e_2+\alpha e_3
\end{cases}
$\\

$\mathrm{G}_{27}^{\alpha}$&$:$&$
\begin{cases}
[e_1,e_3]=e_1,\quad [e_2,e_3]=-e_2; \\
e_1\circ e_3=\alpha e_1,\quad e_2\circ e_3=e_1+\alpha e_2 ,\quad
e_3\circ e_1=-\alpha e_1,\\ 
e_3\circ e_2=-\frac12e_1,\quad e_3\circ e_3=\alpha e_3
\end{cases}
$\\

$\mathrm{G}_{28}^{\alpha}$&$:$&$
\begin{cases}
[e_1,e_3]=e_1,\quad [e_2,e_3]=-e_2; \\
e_1\circ e_3=\alpha e_1,\quad e_2\circ e_3=e_1+\alpha e_2, \quad
e_3\circ e_1=-\alpha e_1,\\  
e_3\circ e_2=-\frac12e_1,\quad e_3\circ e_3=e_2+\alpha e_3
\end{cases}
$\\

$\mathrm{G}_{29}^{\alpha,\beta,\gamma}$&$:$&$
\begin{cases}
[e_1,e_3]=e_1+e_2,\quad [e_2,e_3]=e_2;\\
e_1\circ e_3=\alpha e_1,\quad e_2\circ e_3=\alpha e_2,\quad e_3\circ e_1=\beta e_1+\gamma e_2,\\
e_3\circ e_2=\beta e_2,\quad e_3\circ e_3=\alpha e_3
\end{cases}
$\\

$\mathrm{G}_{30}^{\alpha,\beta}$&$:$&$
\begin{cases}
[e_1,e_3]=e_1+e_2,\quad [e_2,e_3]=e_2; \\
e_1\circ e_3=\alpha e_1,\quad e_2\circ e_3=\alpha e_2 ,\quad e_3\circ e_1=\beta e_2,\quad e_3\circ e_3=e_1+\alpha e_3
\end{cases}
$\\

$\mathrm{G}_{31}^{\alpha}$&$:$&$
\begin{cases}
[e_1,e_3]=e_1+e_2,\quad [e_2,e_3]=e_2;\\
e_1\circ e_3=\alpha e_1,\quad e_2\circ e_3=\alpha e_2,\quad e_3\circ e_3=e_2+\alpha e_3
\end{cases}
$\\

$\mathrm{G}_{32}^{\alpha
}$&$:$&$
\begin{cases}
[e_1,e_3]=e_1+e_2,\quad [e_2,e_3]=e_2;\\
e_1\circ e_3=\alpha(e_1+e_2),\quad e_2\circ e_3=\alpha e_2,\quad e_3\circ e_2=-\alpha e_2,\quad e_3\circ e_3=\alpha e_3
\end{cases}
$\\

$\mathrm{G}_{33}^{\alpha
}$&$:$&$
\begin{cases}
[e_1,e_3]=e_1+e_2,\quad [e_2,e_3]=e_2;\\
e_1\circ e_3=\alpha(e_1+e_2),\quad e_2\circ e_3=\alpha e_2,\quad e_3\circ e_2=-\alpha e_2,\quad e_3\circ e_3=e_1+\alpha e_3
\end{cases}
$\\

$\mathrm{G}_{34}^{\alpha}$&$:$&$
\begin{cases}
[e_1,e_3]=e_1;\\
e_1\circ e_2=e_1,\quad e_2\circ e_1=\alpha e_1,\quad e_2\circ e_2=e_2,\quad e_3\circ e_2=e_3
\end{cases}
$\\

$\mathrm{G}_{35}$&$:$&$
\begin{cases}
[e_1,e_3]=e_1; \\
e_1\circ e_2=e_1,\quad e_2\circ e_2=e_2 ,\quad e_2\circ e_3=e_1,\quad e_3\circ e_2=e_3
\end{cases}
$\\

$\mathrm{G}_{36}$&$:$&$
\begin{cases}
[e_1,e_3]=e_1; \\
e_1\circ e_2=e_1,\quad e_2\circ e_1=-e_1,\quad e_2\circ e_2=e_2 ,\\ e_3\circ e_2=e_3,\quad e_3\circ e_3=e_1
\end{cases}
$\\

$\mathrm{G}_{37}^{\alpha}$&$:$&$
\begin{cases}
[e_1,e_3]=e_1;\\
e_1\circ e_3=e_2,\quad e_3\circ e_1=-e_2,\quad e_3\circ e_3=e_1+\alpha e_2
\end{cases}
$\\

$\mathrm{G}_{38}$&$:$&$
\begin{cases}
[e_1,e_3]=e_1;\\
e_1\circ e_3=e_2,\quad e_3\circ e_1=-e_2,\quad e_3\circ e_3=e_2
\end{cases}
$\\

$\mathrm{G}_{39}^{\alpha
}$&$:$&$
\begin{cases}
[e_1,e_3]=e_1;\\
e_1\circ e_3=\alpha e_1+e_2,\quad e_3\circ e_1=-\alpha e_1-e_2,\quad e_3\circ e_3=\alpha e_3
\end{cases}
$\\

$\mathrm{G}_{40}^{\alpha}$&$:$&$
\begin{cases}
[e_1,e_3]=e_1; \\
e_1\circ e_3=\alpha e_1+e_2,\quad e_2\circ e_3=\alpha e_2,\quad 
e_3\circ e_1=-e_2,\\   
e_3\circ e_2=-\alpha e_2,\quad e_3\circ e_3=\alpha e_3
\end{cases}
$\\

$\mathrm{G}_{41}^{\alpha
}$&$:$&$
\begin{cases}
[e_1,e_3]=e_1; \\
e_1\circ e_3=\alpha e_1+e_2,\quad e_2\circ e_3=\alpha e_2, \quad 
e_3\circ e_1=-e_2,\\  
e_3\circ e_2=-\alpha e_2,\quad e_3\circ e_3=e_1+\alpha e_3
\end{cases}
$\\

$\mathrm{G}_{42}$&$:$&$
\begin{cases}
[e_1,e_3]=e_1;\\
e_2\circ e_1=e_1
\end{cases}
$\\

$\mathrm{G}_{43}^{\alpha,\beta}$&$:$&$
\begin{cases}
[e_1,e_3]=e_1;\\
e_1\circ e_3=\alpha e_1,\quad e_2\circ e_2=e_2,\quad e_3\circ e_1=\beta e_1,\quad e_3\circ e_3=\alpha e_3
\end{cases}
$\\

$\mathrm{G}_{44}^{\alpha}$&$:$&$
\begin{cases}
[e_1,e_3]=e_1; \\
e_1\circ e_3=\alpha e_1,\quad e_2\circ e_2=e_2,\quad e_3\circ e_3=e_1+\alpha e_3
\end{cases}
$\\

$\mathrm{G}_{45}^{\alpha,\beta,\gamma}$&$:$&$
\begin{cases}
[e_1,e_3]=e_1;\\
e_1\circ e_3=\alpha e_1,\quad e_2\circ e_3=\alpha e_2,\quad e_3\circ e_1=\beta e_1,\quad e_3\circ e_2=\gamma e_2,\quad e_3\circ e_3=\alpha e_3
\end{cases}
$\\

$\mathrm{G}_{46}^{\alpha,\beta}$&$:$&$
\begin{cases}
[e_1,e_3]=e_1 \\
e_1\circ e_3=\alpha e_1,\quad e_2\circ e_3=\alpha e_2 ,\quad e_3\circ e_1=\beta e_1,\quad e_3\circ e_3=e_2+\alpha e_3
\end{cases}
$\\

$\mathrm{G}_{47}^{\alpha,\beta}$&$:$&$
\begin{cases}
[e_1,e_3]=e_1; \\
e_1\circ e_3=\alpha e_1,\quad e_2\circ e_3=\alpha e_2 ,\quad e_3\circ e_2=\beta e_2,\quad e_3\circ e_3=e_1+\alpha e_3
\end{cases}
$\\

$\mathrm{G}_{48}^{\alpha}$&$:$&$
\begin{cases}
[e_1,e_3]=e_1 ;\\
e_1\circ e_3=\alpha e_1,\quad e_2\circ e_3=\alpha e_2,\quad e_3\circ e_3=e_1+e_2+\alpha e_3
\end{cases}
$\\

$\mathrm{G}_{49}^{\alpha,\beta
}$&$:$&$
\begin{cases}
[e_1,e_3]=e_1;\\
e_1\circ e_3=\alpha e_1,\quad e_3\circ e_1=\beta e_1,\quad e_3\circ e_3=\alpha e_3
\end{cases}
$\\

$\mathrm{G}_{50}^{\alpha
}$&$:$&$
\begin{cases}
[e_1,e_3]=e_1; \\
e_1\circ e_3=\alpha e_1,\quad e_3\circ e_3=e_1+\alpha e_3
\end{cases}
$\\

$\mathrm{G}_{51}^{\alpha}$&$:$&$
\begin{cases}
[e_1,e_3]=e_1; \\
e_1\circ e_3=\alpha e_1,\quad e_2\circ e_3=e_1,\quad e_3\circ e_3=\alpha e_3
\end{cases}
$\\

$\mathrm{G}_{52}^{\alpha}$&$:$&$
\begin{cases}
[e_1,e_3]=e_1; \\
e_1\circ e_3=\alpha e_1,\quad e_2\circ e_3=e_1,\quad e_3\circ e_3=e_2+\alpha e_3
\end{cases}
$\\

$\mathrm{G}_{53}^{\alpha}$&$:$&$
\begin{cases}
[e_1,e_3]=e_1; \\
e_1\circ e_3=\alpha e_1,\quad e_2\circ e_3=e_1+\alpha e_2 ,\quad e_3\circ e_1=-\alpha e_1,\quad e_3\circ e_3=\alpha e_3
\end{cases}
$\\

$\mathrm{G}_{54}^{\alpha}$&$:$&$
\begin{cases}
[e_1,e_3]=e_1; \\
e_1\circ e_3=\alpha e_1,\quad e_2\circ e_3=e_1+\alpha e_2 ,\quad e_3\circ e_1=-\alpha e_1,\quad e_3\circ e_3=e_2+\alpha e_3
\end{cases}
$\\

$\mathrm{G}_{55}$&$:$&$
\begin{cases}
[e_1, e_2] = e_3, \quad [e_1, e_3] = -2e_1, \quad [e_2, e_3] = 2e_2
\end{cases}
$\\

$\mathrm{G}_{56}$&$:$&$
\begin{cases}
[e_1, e_2] = e_3, \quad [e_1, e_3] = -2e_1, \quad [e_2, e_3] = 2e_2; \\[1ex]
e_2 \circ e_2 = -\frac{1}{4} e_3, \quad e_2 \circ e_3 = e_1, \quad e_3 \circ e_2 = -\frac{1}{2} e_1
\end{cases}
$\\

\end{longtable}
\endgroup

\end{theorem}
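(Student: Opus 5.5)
The plan is to fix the Lie bracket first and then classify the compatible Novikov products up to the stabilizer of that bracket. Since $[\cdot,\cdot]\neq 0$, the Lie algebra $(\mathrm{G},[\cdot,\cdot])$ is one of the nonabelian complex $3$-dimensional Lie algebras. In a suitable basis these are:
\begin{itemize}
\item the Heisenberg algebra $[e_1,e_2]=e_3$;
\item the solvable algebras $[e_1,e_3]=e_1,\ [e_2,e_3]=\alpha e_2$, including $\alpha=0$, which gives $\mathfrak{r}_2\oplus\mathbb{C}$, and the cases $\alpha=1,2,\tfrac12,-1$, which will turn out to be special;
\item the nondiagonalizable algebra $[e_1,e_3]=e_1+e_2,\ [e_2,e_3]=e_2$;
\item $\mathfrak{sl}_2$ with $[e_1,e_2]=e_3,\ [e_1,e_3]=-2e_1,\ [e_2,e_3]=2e_2$.
\end{itemize}
For each bracket I write the Novikov product with $27$ unknown structure constants $x_{ij}^k$ and impose three sets of conditions: the compatibility identity $[x,y\circ z]-[z,y\circ x]+[y,x]\circ z-[y,z]\circ x-y\circ[x,z]=0$, left symmetry of the associator, and right commutativity $(x\circ y)\circ z=(x\circ z)\circ y$.

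The order of imposing these conditions matters. The compatibility identity is \emph{linear} in the $x_{ij}^k$ once the bracket is fixed, so I impose it first on basis triples $(e_i,e_j,e_k)$. This cuts the $27$ unknowns down to a small linear space of candidate products; for example, in the $\mathfrak{r}_2$-type cases most products are forced into the span of $e_1,e_2$ or determined by the action of $e_3$. Only then do I impose the two quadratic Novikov identities on this reduced space. The result, for each bracket, is a short list of solution families.

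The next step is to act on each solution family by the automorphism group $\mathrm{Aut}(\mathrm{G},[\cdot,\cdot])$, which is the stabilizer of the bracket in $\mathrm{GL}_3$.
\begin{itemize}
\item For the Heisenberg algebra the automorphisms are the maps $e_1,e_2\mapsto$ a $\mathrm{GL}_2$-combination of $e_1,e_2$ plus multiples of $e_3$, with $e_3\mapsto\det\cdot e_3$. This group is large enough to normalize, for instance, the semisimple part of the operator $L_{e_1}$. I then split cases according to whether the Novikov algebra has a nonzero idempotent-like element, which gives $\mathrm{G}_{01}$–$\mathrm{G}_{05}$ and $\mathrm{G}_{07}$, or is nilpotent, which gives $\mathrm{G}_{06}$, $\mathrm{G}_{08}$, $\mathrm{G}_{11}$. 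The families $\mathrm{G}_{09}$ and $\mathrm{G}_{10}$ arise from Novikov products in which $e_2$ acts on the right.
\item For $[e_1,e_3]=e_1,\ [e_2,e_3]=\alpha e_2$, the automorphisms are diagonal on $e_1,e_2$ (or $\mathrm{GL}_2$ on them when $\alpha=1$) together with $e_3\mapsto e_3+ue_1+ve_2$. I use the shift in $e_3$ to kill the off-diagonal entries of $e_3\circ e_3$ where possible, and the scalings to normalize the remaining nonzero coefficient to $1$. This produces the families $\mathrm{G}_{12}$–$\mathrm{G}_{28}$ and $\mathrm{G}_{34}$–$\mathrm{G}_{54}$.
\end{itemize}

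The special values of $\alpha$ must be tracked separately, because the linear compatibility system acquires extra solutions exactly there. These are resonances between the eigenvalues $1$ and $\alpha$ of $\mathrm{ad}\,e_3$ and the weights of $e_i\circ e_j$. The extra $e_1\circ e_1=e_2$ term in $\mathrm{G}_{21}$ appears at $\alpha=2$, the extra $e_2\circ e_2=e_1$ term in $\mathrm{G}_{22},\mathrm{G}_{23}$ at $\alpha=\tfrac12$, and the families $\mathrm{G}_{24}$–$\mathrm{G}_{28}$ at $\alpha=-1$. The Jordan-block bracket is handled in the same way and gives $\mathrm{G}_{29}$–$\mathrm{G}_{33}$.

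For $\mathfrak{sl}_2$, I would use that every derivation is inner and the Whitehead-type rigidity of modules. The compatibility identity forces the operators $R_z:x\mapsto x\circ z$ to be tightly constrained. After imposing the Novikov identities, only the zero product ($\mathrm{G}_{55}$) and a single nilpotent product ($\mathrm{G}_{56}$) survive, up to an $\mathrm{Aut}(\mathfrak{sl}_2)$-conjugation that sends a nilpotent element to $e_2$.

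I expect the main obstacle to be the isomorphism problem within each Lie type: showing that the listed families are pairwise nonisomorphic and that parameters are identified only as stated. My plan is to attach invariants to each family that are preserved by $\mathrm{Aut}$ of the bracket and read off the parameters: the spectrum of $R_{e_3}$ and $L_{e_3}$ modulo the derived algebra, the dimension of $\mathrm{G}\circ\mathrm{G}$, and the annihilators. Residual discrete symmetries, such as swapping $e_1\leftrightarrow e_2$ with $\alpha\mapsto\alpha^{-1}$ in the diagonal solvable case, must be quotiented out carefully.
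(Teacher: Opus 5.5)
The paper gives no proof of this statement. It is quoted from \cite{ZHSC-GD}, so your proposal has to stand on its own. Your overall route is the natural one and matches how the list is organized: fix one of the five nonabelian Lie types, solve the compatibility identity (linear in the structure constants of $\circ$), impose the two quadratic Novikov identities, then normalize under the automorphism group of the bracket. The gap is that none of the classification itself is derived. The outcomes (``this produces $\mathrm{G}_{12}$--$\mathrm{G}_{28}$ and $\mathrm{G}_{34}$--$\mathrm{G}_{54}$'', ``only $\mathrm{G}_{55}$ and $\mathrm{G}_{56}$ survive'') are read off the statement, and for a classification theorem that casework \emph{is} the proof.

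Where you do commit to an organizing mechanism, it does not work. In the Heisenberg case, ``nonzero idempotent-like element or nilpotent'' is not a dichotomy. In $\mathrm{G}_{01}^{\alpha}$, for $x=ae_1+be_2+ce_3$ one gets $x\circ x=ab(e_2+\alpha e_3)+ac\,e_3$, so $x\circ x=x$ forces $x=0$. Yet $L_{e_1}$ has eigenvalue $1$, so this Novikov algebra is not nilpotent either. The same happens for $\mathrm{G}_{09}$, where $R_{e_2}$ has eigenvalue $1$. Meanwhile $\mathrm{G}_{10}$, which you set apart, has the idempotent $e_2$. A split of that kind would silently lose families; you need a case division that is exhaustive by construction, e.g.\ by the shape of the solution space of the linear system.

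The step you single out as the main obstacle is not needed and, as planned, would fail. The statement only asserts that every such algebra is isomorphic to \emph{some} entry. It does not claim the entries are pairwise non-isomorphic, and with unrestricted parameters they are not:
\begin{itemize}
\item $\mathrm{G}_{14}^{\alpha,\beta}=\mathrm{G}_{16}^{1,\alpha,\beta,\beta}$, $\mathrm{G}_{24}^{\alpha,\beta,\gamma}=\mathrm{G}_{16}^{-1,\alpha,\beta,\gamma}$ and $\mathrm{G}_{45}^{\alpha,\beta,\gamma}=\mathrm{G}_{16}^{0,\alpha,\beta,\gamma}$ verbatim;
\item $\mathrm{G}_{21}^{2\alpha,\alpha}$ written in the basis $(e_2,e_1,\tfrac12 e_3)$ is exactly $\mathrm{G}_{22}^{\alpha}$.
\end{itemize}
So no invariants will separate the families, and all the effort belongs to completeness.

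On the positive side, your Whitehead intuition for $\mathfrak{sl}_2$ can be made exact, and it reframes the linear step for every Lie type. Put $R_z(y)=y\circ z$ and let the Lie algebra act on $\mathfrak{gl}(\mathrm{G})$ by $x\cdot T=[\mathrm{ad}_x,T]$. Then the compatibility identity is literally $\bigl(x\cdot R_z-z\cdot R_x-R_{[x,z]}\bigr)(y)=0$, i.e.\ $z\mapsto R_z$ is a $1$-cocycle. For $\mathfrak{sl}_2$, Whitehead's first lemma gives $y\circ z=[z,Ty]-T[z,y]$ with $T\in\mathfrak{gl}(\mathrm{G})$ unique up to a scalar, so the compatible products form an $8$-dimensional space. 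In this description $\mathrm{G}_{56}$ corresponds to $T\colon e_2\mapsto\frac14 e_1$, $e_1,e_3\mapsto 0$. What is still missing there, as elsewhere, is the actual solution of the Novikov identities on this space modulo $\mathrm{Aut}$. For the solvable types, the same cocycle description (coboundaries plus representatives of $H^1$) organizes the linear step.
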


\subsection{\normalsize The geometric classification of $3$-dimensional Gelfand--Dorfman algebras}

\begin{theoremB}

The variety of \(3\)-dimensional Gelfand--Dorfman algebras has dimension \(11\) and it has \(10\) irreducible components defined by

\begin{longtable}{l}
$\mathcal{C}_{1}=\overline{\mathcal{O}\bigl(\mathrm{G}_{16}^{\alpha,\beta,\gamma,\delta}\bigr)},\
\mathcal{C}_{2}=\overline{\mathcal{O}\bigl(\mathrm{G}_{20}^{\alpha,\beta}\bigr)},\
\mathcal{C}_{3}=\overline{\mathcal{O}\bigl(\mathrm{G}_{21}^{\alpha,\beta}\bigr)},\
\mathcal{C}_{4}=\overline{\mathcal{O}\bigl(\mathrm{G}_{34}^{\alpha}\bigr)},\
\mathcal{C}_{5}=\overline{\mathcal{O}\bigl(\mathrm{G}_{36}\bigr)},$\\[2pt]
$\mathcal{C}_{6}=\overline{\mathcal{O}\bigl(\mathrm{G}_{39}^{\alpha}\bigr)},\
\mathcal{C}_{7}=\overline{\mathcal{O}\bigl(\mathrm{G}_{43}^{\alpha,\beta}\bigr)},\
\mathcal{C}_{8}=\overline{\mathcal{O}\bigl(\mathrm{G}_{52}^{\alpha}\bigr)},\
\mathcal{C}_{9}=\overline{\mathcal{O}\bigl(\mathrm{G}_{56}\bigr)},\quad\text{and}\quad
\mathcal{C}_{10}=\overline{\mathcal{O}\bigl(\mathrm{N}_{01}\bigr)},$
\end{longtable}

\noindent where $\mathrm{N}_{01}$ has the following multiplication table 
\begin{center}
$e_1\circ e_1=e_1,\ e_2\circ e_2=e_2,\ e_3\circ e_3=e_3.$
\end{center}\noindent
In particular, there are \(3\) rigid algebras: $\mathrm{G}_{36}$, $\mathrm{G}_{56}$ and $\mathrm{N}_{01}$.
    
\end{theoremB}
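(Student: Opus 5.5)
The argument follows the pattern of Theorem A. The variety of $3$-dimensional Gelfand--Dorfman algebras is the union of two parts. The first consists of algebras with $[\cdot,\cdot]=0$, which are Novikov algebras with zero bracket. By Proposition \ref{propN} these lie in the closures of $\mathrm{N}_{01}$, $\mathrm{N}_{07}^{\alpha}$, $\mathrm{N}_{10}^{\alpha}$ and $\mathrm{N}_{11}^{\alpha,\beta}$, all of dimension $9$. The second part is the list $\mathrm{G}_{01}$--$\mathrm{G}_{56}$.

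\textbf{Step 1 (dimensions).} I compute $\dim\mathfrak{Der}$ for the ten proposed generators. This gives $\dim\overline{\mathcal O(\mathrm T)}=9-\dim\mathfrak{Der}(\mathrm T)$, plus the number of essential parameters for families. I expect the families $\mathrm G_{16}^{\alpha,\beta,\gamma,\delta}$ and $\mathrm G_{21}^{\alpha,\beta}$ (or whichever attains it) to reach $11$. The rigid algebras $\mathrm G_{36}$, $\mathrm G_{56}$ and $\mathrm N_{01}$ should have small derivation algebras. For $\mathrm G_{56}$, the Lie part is $\mathfrak{sl}_2$, so $\mathfrak{Der}\subseteq\mathrm{ad}(\mathfrak{sl}_2)$ and the orbit dimension is at most $6$. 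For $\mathrm N_{01}$ the orbit dimension is $9$.

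\textbf{Step 2 (degenerations).} I exhibit an explicit parametrized basis (and parametrized index) showing that every other algebra lies in $\mathcal C_1\cup\dots\cup\mathcal C_{10}$. For algebras with zero bracket, I degenerate from families with nonzero bracket when possible. For example, rescaling the bracket by $t$ in $\mathrm G_{16}$ or $\mathrm G_{43}$ sends it to $0$ and recovers $\mathrm N_{07}^\alpha$, $\mathrm N_{10}^\alpha$ and $\mathrm N_{11}^{\alpha,\beta}$, while $\mathrm N_{01}$ remains a separate component. For the $\mathrm G_i$ list, I group the algebras by Lie type: $\mathfrak h_3$, $\mathfrak r_2\oplus\mathbb C$, $\mathfrak r_{3,\lambda}$, $\mathfrak r_3$, $\mathfrak{sl}_2$. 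Within each group I use a diagonal rescaling combined with parameter substitutions (for example $\gamma\to\frac1t$) to move the extra structure constants to the generic family, in the style of the Theorem A tables.

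\textbf{Step 3 (components are not contained in one another).} A family of dimension $11$ cannot lie in a closure of smaller dimension. For components of equal or larger dimension, I show non-containment with Remark \ref{redbil} and Remark \ref{rem20} applied separately to the Lie and Novikov parts. The Lie part of a degeneration must be a Lie degeneration. Hence $\mathfrak{sl}_2$, which appears only in $\mathrm G_{56}$ with nonzero Novikov part and in $\mathrm G_{55}$ with zero Novikov part, cannot be reached from solvable algebras. Since $\mathrm G_{55}$ has trivial Novikov product, it is contained in $\mathcal C_9$, so $\mathrm G_{56}$ is rigid. Similarly, $\mathrm N_{01}$ has a semisimple Novikov part whose dimension $9$ cannot be exceeded by any algebra with Novikov part outside $\overline{\mathcal O(\mathrm N_{01})}$. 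By Remark \ref{redbil}, a degeneration onto $\mathrm N_{01}$ would need Novikov part $\mathrm N_{01}$. But then the bracket, being compatible, is forced to vanish, so $\mathrm N_{01}$ is rigid. For the remaining pairs, for instance showing $\mathcal C_4,\mathcal C_5,\mathcal C_6,\mathcal C_8\not\subset\mathcal C_1\cup\mathcal C_7$, I use Lemma \ref{main1} and Lemma \ref{main2}. I write down Borel-stable closed sets such as $\{c_{ij}^k=0 \text{ for } k<\max(i,j)\}$ intersected with conditions tying Novikov constants to bracket constants, for example the Novikov ideal generated by the bracket.

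\textbf{Main obstacle.} I expect the hardest step to be Step 3 for the low-dimensional components $\mathcal C_4$, $\mathcal C_5$, $\mathcal C_6$, $\mathcal C_8$ and $\mathcal C_9$. Their dimensions are below $11$, so containment in a larger family has to be excluded by an invariant. I would first compare the Lie parts and Novikov parts separately via Remark \ref{rem20}, using the known degeneration graphs of $3$-dimensional Lie algebras and of Novikov algebras \cite{BB14}. Only when both parts degenerate would I construct an ad hoc Borel-stable set encoding compatibility, for example ``$[\mathrm A,\mathrm A]\circ\mathrm A\subseteq$ span of the last basis vectors''.
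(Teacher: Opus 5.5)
Your arguments for the two extreme rigid algebras are correct, and they differ from the paper's Borel-set arguments ($d_{ij}^1=0$ and $c_{33}^k=0$). Solvability is a closed condition and $\mathrm G_{55}$ has zero Novikov part, so nothing but $\mathrm G_{56}$ degenerates to $\mathrm G_{56}$. Since $\mathrm N_{01}$ has an open orbit among Novikov algebras, anything degenerating to $\mathrm N_{01}$ has Novikov part $\mathrm N_{01}$, and this does force the bracket to vanish: put $x=y=e_i$, $z=e_k$ in the compatibility identity to get $[e_i,e_k]=0$.

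The genuine gap is the rest of Step~3, which is what decides the number of components and the rigidity of $\mathrm G_{36}$. Your main tool, comparing Lie and Novikov parts separately via Remarks~\ref{redbil} and~\ref{rem20}, cannot give the required $\mathrm G_{16}^{\alpha,\beta,\gamma,\delta}\not\to\mathrm G_{34}^{\alpha}$. The Lie part of $\mathrm G_{34}^{\alpha}$ is that of $\mathrm G_{16}^{0,\beta,\gamma,\delta}$. Its Novikov part is isomorphic to that of $\mathrm G_{16}^{\alpha',1,\alpha,0}$ via $e_1\mapsto e_1$, $e_2\mapsto e_3$, $e_3\mapsto e_2$. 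Your fallback is an unspecified Borel-stable set, and your sample condition does not separate these two algebras: both satisfy $[\mathrm A,\mathrm A]\circ\mathrm A\subseteq[\mathrm A,\mathrm A]$ and $\mathrm A\circ[\mathrm A,\mathrm A]\subseteq[\mathrm A,\mathrm A]$.

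The missing idea is a ${\rm GL}(\mathbb V)$-invariant polynomial identity coupling the two products. The paper observes that $[y,z\circ x]-[z,y\circ x]+2[z,y]\circ x=0$ holds in every $\mathrm G_{16}$, $\mathrm G_{21}$ and $\mathrm G_{43}$, but fails in $\mathrm G_{34}$, $\mathrm G_{36}$, $\mathrm G_{39}$ and $\mathrm G_{52}$. For instance, in $\mathrm G_{34}^{\alpha}$ the choice $x=e_1$, $y=e_2$, $z=e_3$ gives $\alpha e_1$. Together with $c_{22}^k=c_{23}^k=c_{32}^k=c_{33}^k=0$, this identity also excludes $\mathrm G_{16}\to\mathrm G_{20},\mathrm G_{21},\mathrm G_{43}$. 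The family $\mathrm G_{20}$ is kept away from the $9$-dimensional generators by ${\rm tr}\,L_x=0$ plus flag conditions on $c_{ij}^k$ and $d_{ij}^k$.

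You also need the actual dimensions to know which pairs to treat. $\mathcal C_2,\mathcal C_3,\mathcal C_7$ are $10$-dimensional, not $11$, so $\mathrm G_{20},\mathrm G_{21},\mathrm G_{43}\not\to\mathrm G_{34},\mathrm G_{36},\mathrm G_{39},\mathrm G_{52}$ must be proved as well. Two of your preliminary claims are wrong:
\begin{itemize}
\item $\mathfrak{Der}(\mathrm G_{56})\subseteq{\rm ad}(\mathfrak{sl}_2)$ bounds the orbit dimension from \emph{below} by $6$, not from above. It is in fact $8$, since only ${\rm ad}\,e_1$ preserves $\circ$.
\item $\mathrm N_{10}^{\alpha}$ cannot be reached from $\mathrm G_{16}$ or $\mathrm G_{43}$. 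Their Novikov parts lie in $\overline{\mathcal O(\mathrm N_{11}^{\alpha,\beta})}$ and $\overline{\mathcal O(\mathrm N_{07}^{\alpha})}$ respectively, so this follows from Remark~\ref{redbil} and Proposition~\ref{propN}. One needs $\mathrm G_{21}$, whose Novikov parts belong to the family $\mathrm N_{10}^{\alpha}$.
\end{itemize}
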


\begin{proof}
Thanks to Proposition \ref{propN}, each GD algebra with trivial anticommutative multiplication is in the orbit closure of one of the following families of algebras  
${\rm N}_{01},$ 
${\rm N}_{07}^{\alpha},$ 
${\rm N}_{10}^{\alpha},$ or 
${\rm N}_{11}^{\alpha,\beta}.$
The dimensions of the irreducible components are as follows
\begin{longtable}{rll}
$\dim  \mathcal{C}_{1}$&$=$&$11,$ \\ 
$\dim  \mathcal{C}_{2}=\dim  \mathcal{C}_{3}=\dim  \mathcal{C}_{7}$&$=$&$10,$\\  
$\dim  \mathcal{C}_{4}=\dim  \mathcal{C}_{5}=\dim  \mathcal{C}_{6}=\dim  \mathcal{C}_{8}=\dim  \mathcal{C}_{10}$&$=$&$9,$\\
$\dim  \mathcal{C}_{9}$&$=$&$8.$\\

\end{longtable}

The following list of degenerations shows that every $3$-dimensional Gelfand--Dorfman algebra belongs to $\mathcal{C}_1\cup\ldots\cup\mathcal{C}_{10}$.

\begin{longtable}{lcl}
$\mathrm{G}_{43}^{\frac{\alpha}{t},\ \frac{\alpha+1}{t}}$ & $\xrightarrow{\big(te_3,\ e_1,\ e_2\big)}$ & ${\rm N}_{07}^{\alpha}$\\[0.5ex]\hline
$\mathrm{G}_{21}^{\frac{\alpha}{t},\ \frac{2\alpha+1}{2t}}$ & $\xrightarrow{\big(te_3,\ e_2,\ e_1\big)}$ & ${\rm N}_{10}^{\alpha}$\\[0.5ex]\hline

$\mathrm{G}_{16}^{2,\ \frac{\beta}{t},\ \frac{\alpha+\beta}{t},\ \frac{\beta+1}{t}}$ & $\xrightarrow{\big(te_3,\ e_2,\ e_1\big)}$ & ${\rm N}_{11}^{\alpha,\beta}$\\[0.5ex]\hline

$\mathrm{G}_{16}^{t, 0, \frac{1}{t}, \alpha+\frac{1}{t}}$ 
  & $\xrightarrow{\big(te_3,\ -\frac{1}{t}e_1+e_2,\ e_1\big)}$ 
  & $\mathrm{G}_{01}^{\alpha}$ \\[0.5ex]
\hline
$\mathrm{G}_{43}^{-\frac{1}{t}, \alpha-\frac{1}{t}}$ 
  & $\xrightarrow{\big(-\frac{1}{t}e_1+t e_2,\ e_2-t e_3,\ e_1\big)}$ 
  & $\mathrm{G}_{02}^{\alpha}$ \\[0.5ex]
\hline
$\mathrm{G}_{16}^{t, -\frac{1}{t}, -\frac{\alpha}{t}, \beta-\frac{\alpha}{t}}$ 
  & $\xrightarrow{\big(-t e_3,\ \frac{1}{t}e_1+e_2,\ e_1\big)}$ 
  & $\mathrm{G}_{03}^{\alpha,\beta}$ \\[0.5ex]
\hline
$\mathrm{G}_{16}^{t, \frac{1}{t}, t, \alpha+t}$ 
  & $\xrightarrow{\big(-\frac{1}{t^{3}}e_1+\frac{1}{t(\alpha+t)}e_2+t e_3,\ -\frac{1}{t}e_1+e_2,\ e_1\big)}$ 
  & $\mathrm{G}_{04}^{\alpha}$ \\[0.5ex]
\hline
$\mathrm{G}_{34}^{t}$ 
  & $\xrightarrow{\big(e_1+e_2,\ t e_3,\ t e_1\big)}$ 
  & $\mathrm{G}_{05}$ \\[0.5ex]
\hline
$\mathrm{G}_{43}^{t-\alpha, \alpha-t}$ 
  & $\xrightarrow{\big(t e_3,\ e_1+t^2 e_2,\ -t e_1\big)}$ 
  & $\mathrm{G}_{06}^{\alpha}$ \\[0.5ex]
\hline
$\mathrm{G}_{39}^{-\frac{1}{t}}$ 
  & $\xrightarrow{\big(-t e_3,\ \frac{1}{t}e_1-e_2,\ e_1\big)}$ 
  & $\mathrm{G}_{07}$ \\[0.5ex]
\hline
$\mathrm{G}_{20}^{\frac{1}{t}, -\frac{\beta}{t}}$ 
  & $\xrightarrow{\big(t^4 e_1+\frac{t^5}{\alpha+t}e_2,\ -\frac{t^3(1-t)}{\alpha+t}e_1-t^2 e_3,\ -\frac{t^6}{\alpha+t}e_2\big)}$ 
  & $\mathrm{G}_{08}^{\alpha,\beta}$ \\[0.5ex]
\hline
$\mathrm{G}_{52}^{\frac{1}{t}}$ 
  & $\xrightarrow{\big(t^2 e_1 - t e_2,\ t^2 e_2 + t e_3,\ t^3 e_1\big)}$ 
  & $\mathrm{G}_{09}$ \\[0.5ex]
\hline
$\mathrm{G}_{52}^{\frac{1}{t}}$ 
  & $\xrightarrow{\big(t e_1 - e_2,\ t^2 e_2 + t e_3,\ t^2 e_1\big)}$ 
  & $\mathrm{G}_{10}$ \\[0.5ex]
\hline

    $\mathrm{G}_{43}^{\frac{\beta+\sqrt{\beta^{2}-4\alpha}}{2},\ \frac{-\beta+\sqrt{\beta^{2}-4\alpha}}{2}}$
  & $\xrightarrow{\quad\quad\quad\quad\quad\quad\quad}$
  & $\mathrm{G}_{11}^{\alpha,\beta}$ \\
  
\multicolumn{3}{c}{$E_1^t= \frac{\beta+\sqrt{\beta^{2}-4\alpha}}{2}(e_1+t e_2)+t e_3$
\qquad $E_2^t= e_1+t e_2$ \qquad
$E_3^t= -t e_1$} \\[0.5ex]
\hline

\multicolumn{3}{l}{$\mathrm{G}_{16}^{\frac{2+t\big(\gamma+\sqrt{\gamma^2+4\beta}\big)}{2+t\big(\gamma-\sqrt{\gamma^2+4\beta}\big)},\ \frac{2\alpha}{2+t\big(\gamma-\sqrt{\gamma^2+4\beta}\big)},\ \frac{\gamma-\sqrt{\gamma^2+4\beta}}{2+t\big(\gamma-\sqrt{\gamma^2+4\beta}\big)},\ \frac{\gamma+\sqrt{\gamma^2+4\beta}}{2+t\big(\gamma-\sqrt{\gamma^2+4\beta}\big)}} \xrightarrow{\qquad\quad\ } \mathrm{G}_{12}^{\alpha,\beta,\gamma}$} \\

 \multicolumn{3}{c}{\small $E_1^t= \frac{\big(\gamma+\sqrt{\gamma^2+4\beta}\big)e_1
-\big(\gamma-\sqrt{\gamma^2+4\beta}\big)e_2}{2\sqrt{\gamma^2+4\beta}}$\qquad  $E_2^t= \frac{-e_1+e_2}{\sqrt{\gamma^2+4\beta}}$ \qquad
$E_3^t= \big(1+t\frac{\gamma-\sqrt{\gamma^2+4\beta}}{2}\big)e_3$} \\[0.5ex]
\hline

$\mathrm{G}_{16}^{\frac{1+\beta t+2t^2}{1+t^2},\ \frac{\alpha}{1+t^2},\ \frac{t}{1+t^2},\ \frac{\beta+2t}{1+t^2}}$ 
  & $\xrightarrow{\big(e_1,\ \frac{-e_1+e_2}{\beta+t},\ -\frac{e_1}{t(\beta+t)}+\frac{e_2}{(\beta+2t)(\beta+t)}+(1+t^2)e_3\big)}$ 
  & $\mathrm{G}_{13}^{\alpha,\beta}$ \\[0.5ex]
\hline
$\mathrm{G}_{16}^{1+t, \alpha, \beta+t, \beta+t}$ 
  & $\xrightarrow{\big(e_1,\ e_2,\ e_3\big)}$ 
  & $\mathrm{G}_{14}^{\alpha,\beta}$ \\[0.5ex]
\hline
$\mathrm{G}_{16}^{1+t, \alpha, t, t}$ 
  & $\xrightarrow{\big(e_1,\ e_2,\ t^{-1}e_1+e_3\big)}$ 
  & $\mathrm{G}_{15}^{\alpha}$ \\[0.5ex]
\hline
$\mathrm{G}_{16}^{\alpha, \beta, \gamma, t}$ 
  & $\xrightarrow{\big(e_1,\ e_2,\ e_3+t^{-1}e_2\big)}$ 
  & $\mathrm{G}_{17}^{\alpha,\beta,\gamma}$ \\[0.5ex]
\hline
$\mathrm{G}_{16}^{\alpha, \beta, t, t}$ 
  & $\xrightarrow{\big(e_1,\ e_2,\ e_3+t^{-1}e_1+t^{-1}e_2\big)}$ 
  & $\mathrm{G}_{18}^{\alpha,\beta}$ \\[0.5ex]
\hline
$\mathrm{G}_{20}^{\alpha, \beta}$ 
  & $\xrightarrow{\big(t^{-1}e_1,\ t^{-1}e_2,\ e_3\big)}$ 
  & $\mathrm{G}_{19}^{\alpha,\beta}$ \\[0.5ex]
\hline
$\mathrm{G}_{21}^{2\alpha+t, \alpha}$ 
  & $\xrightarrow{\big(e_2,\ e_1,\ \frac{1}{2}e_3\big)}$ 
  & $\mathrm{G}_{22}^{\alpha}$ \\[0.5ex]
\hline
$\mathrm{G}_{21}^{2\alpha+t, \alpha}$ 
  & $\xrightarrow{\big(\frac{e_2}{4},\ \frac{e_1}{2},\ \frac{1}{2}e_3-\frac{1}{2t}e_2\big)}$ 
  & $\mathrm{G}_{23}^{\alpha}$ \\[0.5ex]
\hline
$\mathrm{G}_{16}^{-1+t, \alpha, \beta, \gamma}$ 
  & $\xrightarrow{\big(e_1,\ e_2,\ e_3\big)}$ 
  & $\mathrm{G}_{24}^{\alpha,\beta,\gamma}$ \\[0.5ex]
\hline
$\mathrm{G}_{16}^{-1+t, \alpha, t, \beta}$ 
  & $\xrightarrow{\big(e_1,\ e_2,\ e_3+t^{-1}e_1\big)}$ 
  & $\mathrm{G}_{25}^{\alpha,\beta}$ \\[0.5ex]
\hline
$\mathrm{G}_{16}^{-1+t, \alpha, t, t}$ 
  & $\xrightarrow{\big(e_1,\ e_2,\ e_3+t^{-1}(e_1+e_2)\big)}$ 
  & $\mathrm{G}_{26}^{\alpha}$ \\[0.5ex]
\hline
$\mathrm{G}_{20}^{\frac{1}{t-1}, \frac{\alpha}{t-1}}$ 
  & $\xrightarrow{\big(\frac{(t-1)^3}{t} e_2,\ \frac{(t-1)^2}{t} e_1,\ (t-1) e_3\big)}$ 
  & $\mathrm{G}_{27}^{\alpha}$ \\[0.5ex]
\hline
$\mathrm{G}_{20}^{\frac{1}{t-1}, \frac{\alpha}{t-1}}$ 
  & $\xrightarrow{\big((t-1)^3 e_2,\ (t-1)^2 e_1,\ (t-1) e_3\big)}$ 
  & $\mathrm{G}_{28}^{\alpha}$ \\[0.5ex]
\hline
$\mathrm{G}_{16}^{1+t, \alpha, \beta, \beta+\gamma t}$ 
  & $\xrightarrow{\big(-t^{-1}e_1+t^{-1}e_2,\ e_2,\ e_3\big)}$ 
  & $\mathrm{G}_{29}^{\alpha,\beta,\gamma}$ \\[0.5ex]
\hline
$\mathrm{G}_{16}^{1+t, \alpha, t, t(1+\beta+t)}$ 
  & $\xrightarrow{\big(-t^{-1}e_1+t^{-1}e_2,\ e_2,\ -t^{-2}e_1+\frac{1}{t^2(1+\beta+t)}e_2+e_3\big)}$ 
  & $\mathrm{G}_{30}^{\alpha,\beta}$ \\[0.5ex]
\hline
$\mathrm{G}_{16}^{1+t, \alpha, t, t}$ 
  & $\xrightarrow{\big(-t^{-1}e_1+t^{-1}e_2,\ e_2,\ t^{-1}e_2+e_3\big)}$ 
  & $\mathrm{G}_{31}^{\alpha}$ \\[0.5ex]
\hline
$\mathrm{G}_{20}^{1+t,\alpha}$ 
  & $\xrightarrow{\big(t^{-2}e_1+(\alpha t^3)^{-1}e_2,\ (\alpha t^2)^{-1}e_2,\ e_3\big)}$ 
  & $\mathrm{G}_{32}^{\alpha}$ \\[0.5ex]
\hline
$\mathrm{G}_{20}^{1+t, \alpha}$ 
  & $\xrightarrow{\big(e_1+(\alpha t)^{-1}e_2,\ \alpha^{-1}e_2,\ (\alpha(1+t))^{-1}e_1+e_3\big)}$ 
  & $\mathrm{G}_{33}^{\alpha}$ \\[0.5ex]
\hline
$\mathrm{G}_{34}^{t}$ 
  & $\xrightarrow{\big(e_1,\ e_2,\ e_3+t^{-1}e_1\big)}$ 
  & $\mathrm{G}_{35}$ \\[0.5ex]
\hline
$\mathrm{G}_{39}^{t}$ 
  & $\xrightarrow{\big(e_1,\ e_2,\ e_3-t^{-1}e_1-\alpha t^{-1}e_2\big)}$ 
  & $\mathrm{G}_{37}^{\alpha}$ \\[0.5ex]
\hline
$\mathrm{G}_{39}^{t}$ 
  & $\xrightarrow{\big(e_1,\ e_2,\ e_3-t^{-1}e_2\big)}$ 
  & $\mathrm{G}_{38}$ \\[0.5ex]
\hline
$\mathrm{G}_{20}^{t, \alpha}$ 
  & $\xrightarrow{\big(\frac{e_1}{t},\ (t(1-t))^{-1}e_2,\ e_3\big)}$ 
  & $\mathrm{G}_{40}^{\alpha}$ \\[0.5ex]
\hline
$\mathrm{G}_{20}^{t, \alpha}$ 
  & $\xrightarrow{\big((1+t)^{-1}e_1,\ ((1+t)(1-t))^{-1}e_2,\ e_3\big)}$ 
  & $\mathrm{G}_{41}^{\alpha}$ \\[0.5ex]
\hline
$\mathrm{G}_{34}^{t^{-1}}$ 
  & $\xrightarrow{\big(e_1,\ t e_2,\ e_3\big)}$ 
  & $\mathrm{G}_{42}$ \\[0.5ex]
\hline
$\mathrm{G}_{43}^{\alpha, t}$ 
  & $\xrightarrow{\big(e_1,\ e_2,\ e_3+t^{-1}e_1\big)}$ 
  & $\mathrm{G}_{44}^{\alpha}$ \\[0.5ex]
\hline
$\mathrm{G}_{16}^{t, \alpha, \beta, \gamma}$ 
  & $\xrightarrow{\big(e_1,\ e_2,\ e_3\big)}$ 
  & $\mathrm{G}_{45}^{\alpha,\beta,\gamma}$ \\[0.5ex]
\hline
$\mathrm{G}_{16}^{t, \alpha, \beta, t}$ 
  & $\xrightarrow{\big(e_1,\ e_2,\ e_3+t^{-1}e_2\big)}$ 
  & $\mathrm{G}_{46}^{\alpha,\beta}$ \\[0.5ex]
\hline

$\mathrm{G}_{16}^{t, \alpha, t, \beta}$ 
  & $\xrightarrow{\big(e_1,\ e_2,\ e_3+t^{-1}e_1\big)}$ 
  & $\mathrm{G}_{47}^{\alpha,\beta}$ \\[0.5ex]
\hline
$\mathrm{G}_{16}^{t, \alpha, t, t}$ 
  & $\xrightarrow{\big(e_1,\ e_2,\ e_3+t^{-1}e_1+t^{-1}e_2\big)}$ 
  & $\mathrm{G}_{48}^{\alpha}$ \\[0.5ex]
\hline
$\mathrm{G}_{43}^{\alpha, \beta}$ 
  & $\xrightarrow{\big(e_1,\ t e_2,\ e_3\big)}$ 
  & $\mathrm{G}_{49}^{\alpha,\beta}$ \\[0.5ex]
\hline
$\mathrm{G}_{43}^{\alpha, t}$ 
  & $\xrightarrow{\big(e_1,\ t e_2,\ e_3+t^{-1}e_1\big)}$ 
  & $\mathrm{G}_{50}^{\alpha}$ \\[0.5ex]
\hline
$\mathrm{G}_{52}^{\alpha}$ 
  & $\xrightarrow{\big(t^{-1}e_1,\ t^{-1}e_2,\ e_3\big)}$ 
  & $\mathrm{G}_{51}^{\alpha}$ \\[0.5ex]
\hline
$\mathrm{G}_{20}^{t^{-1},\alpha t^{-1}}$ 
  & $\xrightarrow{\big(t^2e_2,\ te_1,\ t e_3\big)}$ 
  & $\mathrm{G}_{53}^{\alpha}$ \\[0.5ex]
\hline
$\mathrm{G}_{20}^{t^{-1},\alpha t^{-1}}$ 
  & $\xrightarrow{\big(\frac{t^3}{1+t}e_2,\ \frac{t^2}{1+t}e_1,\ t e_3\big)}$ 
  & $\mathrm{G}_{54}^{\alpha}$ \\[0.5ex]
\hline
$\mathrm{G}_{56}$ 
  & $\xrightarrow{\big(t^{-1}e_1,\ t e_2,\ e_3\big)}$ 
  & $\mathrm{G}_{55}$ \\[0.5ex]
\hline

\end{longtable}
 {Let} us now prove the necessary non-degenerations. The closures $\mathcal{C}_1,\ldots,\mathcal{C}_{10}$ are irreducible, and no algebra of one of the families defining them is isomorphic to an algebra of another one. Hence, $\mathcal{C}_i\not\subseteq\mathcal{C}_j$ whenever $i\neq j$ and $\dim\mathcal{C}_i\geq\dim\mathcal{C}_j$, and it is sufficient to prove that none of the algebras (families) defining $\mathcal{C}_1,\ldots,\mathcal{C}_{10}$ belongs to the closure of a family of bigger dimension. This is done in the following table, where each non-degeneration is proved by Lemma \ref{main2} with the set ${\rm C}$ given in the second column. Here, $c_{ij}^k$ and $d_{ij}^k$ are the structure constants of $\circ$ and $[\cdot,\cdot]$, respectively; the indices $i,j,k,q$ take all values in $\{1,2,3\}$, and each family is denoted by its name with the parameters of the classification. The non-degenerations whose target is a family are proved for generic values of its parameters; this is sufficient to show that $\mathcal{C}_i\not\subseteq\mathcal{C}_j$.

\begin{longtable}{|rcl|l|}
\hline
\multicolumn{3}{|c|}{\textbf{Non-degeneration}} & \textbf{Relations defining} ${\rm C}$ \\
\hline
$\mathrm{G}_{16}^{\alpha,\beta,\gamma,\delta},\,\mathrm{G}_{20}^{\alpha,\beta},$&&&\\
$\mathrm{G}_{21}^{\alpha,\beta},\,\mathrm{G}_{43}^{\alpha,\beta}$&$\not\to$&${\rm N}_{01}$
& $c_{33}^1=c_{33}^2=c_{33}^3=0$ \\

\hline
$\mathrm{G}_{16}^{\alpha,\beta,\gamma,\delta},\,\mathrm{G}_{20}^{\alpha,\beta},\,\mathrm{G}_{21}^{\alpha,\beta},$ &&&\\
$\mathrm{G}_{34}^{\alpha},\,\mathrm{G}_{36},\,\mathrm{G}_{39}^{\alpha},$&&&\\
$\mathrm{G}_{43}^{\alpha,\beta},\,\mathrm{G}_{52}^{\alpha},\,{\rm N}_{01}$&$\not\to$&$\mathrm{G}_{56}$ 
& $d_{ij}^1=0$ \\

\hline
$\mathrm{G}_{16}^{\alpha,\beta,\gamma,\delta}$&$\not\to$&$\mathrm{G}_{20}^{\alpha,\beta},\,\mathrm{G}_{21}^{\alpha,\beta},\,\mathrm{G}_{34}^{\alpha},\,\mathrm{G}_{36},$ & 
$\sum\limits_{p}\big(c_{ki}^pd_{jp}^q-c_{ji}^pd_{kp}^q+2d_{kj}^pc_{pi}^q\big)=0,$\\ 

&&$\mathrm{G}_{39}^{\alpha},\,\mathrm{G}_{43}^{\alpha,\beta},\,\mathrm{G}_{52}^{\alpha}$&
$c_{22}^k=c_{23}^k=c_{32}^k=c_{33}^k=0$\\
\hline

$\mathrm{G}_{21}^{\alpha,\beta},\,\mathrm{G}_{43}^{\alpha,\beta}$&$\not\to$&
$\mathrm{G}_{34}^{\alpha},\,\mathrm{G}_{36},\,\mathrm{G}_{39}^{\alpha},\,\mathrm{G}_{52}^{\alpha}$ 
& $\sum\limits_{p}\big(c_{ki}^pd_{jp}^q-c_{ji}^pd_{kp}^q+2d_{kj}^pc_{pi}^q\big)=0$ \\
\hline

$\mathrm{G}_{20}^{\alpha,\beta}$&$\not\to$&$\mathrm{G}_{34}^{\alpha},\,\mathrm{G}_{36},\,\mathrm{G}_{39}^{\alpha},\,\mathrm{G}_{52}^{\alpha}$ 
&  $\sum\limits_{p}c_{ip}^p=0,\quad c_{i2}^1=c_{i2}^2=c_{i3}^1=c_{i3}^2=0,$\\ &&&$d_{13}^1=d_{13}^2=d_{23}^1=d_{23}^2=d_{23}^3=0$  \\
\hline
\end{longtable}
 
\noindent Each set ${\rm C}$ consists of all structures satisfying the listed relations; it is stable under the action of the invertible lower triangular matrices. The relations containing sums over $p$ are even ${\rm GL}(\mathbb{V})$-invariant: the relation in the third and fourth rows means that the identity $[y,z\circ x]-[z,y\circ x]+2[z,y]\circ x=0$ holds (take $x=e_i$, $y=e_j$, $z=e_k$), and the relation $\sum_p c_{ip}^p=0$ in the last row means that ${\rm tr}\,L_x=0$ for all $x$, where $L_x$ is the operator of left multiplication by $x$ with respect to $\circ$.

\medskip

\noindent Summarizing, $\mathcal{C}_1, \ldots, \mathcal{C}_{10}$ are the irreducible components of the variety of $3$-dimensional Gelfand--Dorfman algebras, and the largest of them, $\mathcal{C}_1$, has dimension $11$.

\end{proof}


\end{document}